\documentclass[12pt]{article}
\usepackage{BOONDOX-cal}
\usepackage{amssymb,a4}
\usepackage{amsmath,amsfonts,amssymb,amsthm, mathrsfs}
\usepackage{tikz}

\allowdisplaybreaks[4]
\newtheorem{thm}{Theorem}[section]
\newtheorem{lem}[thm]{Lemma}
\newtheorem{defi}[thm]{Definition}

\begin{document}
\begin{center}
{\Large \bf Classification of the irreducible ordinary modules for affine vertex operator superalgebras}
\end{center}

\begin{center}
{Huaimin Li, Qing Wang
		\footnote{Supported by China NSF grants No. 12571033.}\\
School of Mathematical Sciences, Xiamen University, Xiamen, China 361005}	
\end{center}

\begin{abstract}
Let $\mathfrak{g}$ be a basic classical Lie superalgebra,
$\mathcal{k}=\frac{h^{\vee}}{u}-h^{\vee}$ a boundary admissible level of $\widehat{\mathfrak{g}}$,
where $u$ is a positive integer and
$h^{\vee}$ is the dual Coxeter number of $\mathfrak{g}$.
In this paper, we classify the irreducible ordinary modules for the
affine vertex operator superalgebra $L_{\widehat{\mathfrak{g}}}(\mathcal{k},0)$ associated to any basic classical Lie superalgebra $\mathfrak{g}$.
More specifically, if $\mathfrak{g}$ is a basic classical Lie superalgebra of type I,
we prove that $L_{\widehat{\mathfrak{g}}}(\mathcal{k},0)$ has exactly $u$ inequivalent irreducible ordinary modules.
If $\mathfrak{g}$ is a finite dimensional simple Lie algebra or a basic classical Lie superalgebra of type II,
we prove that $L_{\widehat{\mathfrak{g}}}(\mathcal{k},0)$ itself is the only irreducible ordinary $L_{\widehat{\mathfrak{g}}}(\mathcal{k},0)$-module.
\end{abstract}
\section{Introduction}
Admissible representations were first studied by Kac and Wakimoto to explore modular invariant representations and
the modular invariance property of normalized (super)characters of admissible modules for
the affine Lie (super)algebras associated to any simple finite-dimensional Lie algebra or $osp(1|2n)$ has also been proved in \cite{KW88}.
Furthermore, they classified the admissible weights for all affine Lie algebras in \cite{KW89}.
Let $\mathfrak{g}$ be a basic classical Lie superalgebra, $\widehat{\mathfrak{g}}$ be the corresponding affine Lie superalgebra, and
$L_{\widehat{\mathfrak{g}}}(\mathcal{k},0)$ be the simple quotient of the universal affine vertex operator superalgebra of $\widehat{\mathfrak{g}}$
at the level $\mathcal{k}$.
A vertex operator superalgebra $L_{\widehat{\mathfrak{g}}}(\mathcal{k},0)$ is called admissible if it is isomorphic to an admissible representation as a $\widehat{\mathfrak{g}}$-module, and $\mathcal{k}$ is called an admissible level.
In \cite{AM95}, Adamovi\'{c} and Milas proved that for the admissible level $\mathcal{k}$, $\mathfrak{g}=sl_2$,
a $\widehat{\mathfrak{g}}$-module $L(\lambda)$ is a $L_{\widehat{\mathfrak{g}}}(\mathcal{k},0)$-module if and only if
$\lambda$ is an admissible weight of level $\mathcal{k}$,
and they presented a conjecture for arbitrary affine Lie algebras.
This conjecture was proved by Arakawa in \cite{Ar16}, that is,
Arakawa proved that $L_{\widehat{\mathfrak{g}}}(\mathcal{k},0)$ at the admissible level $\mathcal{k}$ is rational in the category $\mathcal{O}$
for any simple finite-dimensional Lie algebra $\mathfrak{g}$ and the representations of $L_{\widehat{\mathfrak{g}}}(\mathcal{k},0)$
are equivalent to admissible representations of $\widehat{\mathfrak{g}}$ of level $\mathcal{k}$ whose integral root system is isometric
to that of $L(\mathcal{k}\Lambda_0)$.
This result was extended to the only anisotropic basic Lie superalgebra $\mathfrak{g}=osp(1|2n)$ by Gorelik and Serganova in \cite{GS22}.
Moreover, an interesting bijection between admissible representations for the affine Lie algebra $\widehat{\mathfrak{g}}$ at boundary admissible levels and $\mathbb{C}^\times$ fixed points in homogeneous elliptic affine Springer fibres for the Langlands dual affine Lie algebra $\widehat{\mathfrak{g}}^\vee$
has been constructed by Shan, Xie and Yan in \cite{SXY24}.
On the other hand,
Creutzig, Huang and Yang proved that the ordinary module category of $L_{\widehat{\mathfrak{g}}}(\mathcal{k},0)$
is a braided tensor category for any simple finite-dimensional Lie algebra $\mathfrak{g}$ at the admissible level $\mathcal{k}$ in \cite{CHY18}.
By using the theory of vertex superalgebra extensions,
Creutzig, Genra and Linshaw classified all the irreducible ordinary modules for $L_{\widehat{osp(1|2n)}}(\mathcal{k},0)$ at the admissible level $\mathcal{k}$
and proved that the ordinary module category of $L_{\widehat{osp(1|2n)}}(\mathcal{k},0)$ is a fusion supercategory in \cite{CGL24}.
While except the simple finite-dimensional Lie algebra and $osp(1|2n)$,
for other basic classical Lie superalgebras,
modular invariance occurs in boundary level admissible modules \cite{GK15,KW17}.
As a representative, in \cite{LW25}, we classified the irreducible weak modules of $L_{\widehat{sl(2|1)}}(\mathcal{k},0)$ in the category $\mathcal{O}$
and furthermore classified the irreducible ordinary modules at a boundary admissible level $-\frac{1}{2}$ and at a non-boundary admissible level $\frac{1}{2}$,
we see that $L_{\widehat{sl(2|1)}}(-\frac{1}{2},0)$ is rational in the category $\mathcal{O}$,
while for the case of non-boundary admissible level $\frac{1}{2}$, we showed that there are infinitely many irreducible weak
$L_{\widehat{sl(2|1)}}(\frac{1}{2},0)$-modules in the category $\mathcal{O}$ and it is not rational in the category $\mathcal{O}$,
and we presented a conjecture that for the boundary admissible level $\mathcal{k}$, $L_{\widehat{\mathfrak{g}}}(\mathcal{k},0)$
is rational in the category $\mathcal{O}$ and the irreducible weak modules in the category $\mathcal{O}$ are exactly the admissible modules of level $\mathcal{k}$
for any basic Lie superalgebra.
Gorelik and Kac proved this for any basic Lie superalgebra except the case $D(2|1;a)$ in \cite{GK25}.
Based on Gorelik-Kac's work, we classify the irreducible ordinary modules of $L_{\mathfrak{g}}(\mathcal{k},0)$ for any basic classical Lie superalgebra $\mathfrak{g}$ at the boundary admissible level $\mathcal{k}$ in this paper.

Let $\mathfrak{g}$ be a basic classical Lie superalgebra except $D(2|1;a)$, $\mathfrak{h}$ a Cartan subalgebra of $\mathfrak{g}$,
we fix simple roots as the distinguished simple roots (cf. \cite{Mu12}).
Let $\mathcal{k}$ be a boundary principal admissible level of $\widehat{\mathfrak{g}}$,
from the classification result for boundary admissible level of indecomposable non-twisted affine Kac-Moody superalgebra \cite{GK25},
we know that if $\mathfrak{g}\ne osp(1|2n)$, all boundary admissible levels of $\widehat{\mathfrak{g}}$ are principal.
Let $L(\hat{\lambda})~(\hat{\lambda}\in \widehat{\mathfrak{h}}^*)$ be an irreducible highest weight $\widehat{\mathfrak{g}}$-module,
we prove that $L(\hat{\lambda})$ is an ordinary $L_{\widehat{\mathfrak{g}}}(\mathcal{k},0)$-module if and only if
$\hat{\lambda}$ is a principal admissible weight of level $\mathcal{k}$ such that $L(\lambda)$
is a finite-dimensional $\mathfrak{g}$-module, where $\lambda=\hat{\lambda}|_{\mathfrak{h}}$.
Together with these two conditions,
we can classify the irreducible ordinary modules for the vertex operator superalgebra $L_{\widehat{\mathfrak{g}}}(\mathcal{k},0)$.
We recall the definition of a basic classical Lie superalgebra $\mathfrak{g}$ being of type I if $Q/Q^0\cong\mathbb{Z}$
and $\mathfrak{g}$ being of type II if $Q/Q^0\cong\mathbb{Z}_2$, where $Q$ is the root lattice of $\mathfrak{g}$, $Q^0$ is its even root lattice (cf. \cite{Kac77,S11}),
and basic classical Lie superalgebras of type I consist of type $A(n,m)$ and type $C(n+1)$, while
basic classical Lie superalgebras of type II consist of type $B(m,n)$, type $D(m,n)$, type $F(4)$, type $G(3)$ and type $D(2|1;a)$.
For basic classical Lie superalgebras of type I,
from the definition by $Q/Q^0\cong\mathbb{Z}$, we see that the common point of Lie superalgebras of type I
is that the even roots are independent with odd simple root( it means that the even root can only be written as the linear combinations of the even simple roots), then the condition for a $\mathfrak{g}$-module being finite-dimensional is determined by only even simple roots, in this case,
we deduce that $L_{\widehat{\mathfrak{g}}}(\mathcal{k},0)$ has exactly $u$ inequivalent irreducible ordinary modules.
For basic classical Lie superalgebras of type II,
since we need normalize nondegenerate even supersymmetric invariant bilinear form by $h^\vee\geq0$ and $(\alpha,\alpha)=2$ for a positive root of maximal square length,
note that we also have $h^\vee=(\rho,\theta)+\frac{1}{2}(\theta,\theta)$, where $\rho$ is the Weyl vector and $\theta$ is the highest root,
thus we need divide the proof for type $B(m,n)$ and $D(m,n)$
into two cases with $m>n$ and $m\leq n$.
Together with $F(4)$ and $G(3)$,
we prove that $L_{\widehat{\mathfrak{g}}}(\mathcal{k},0)$ itself is the only irreducible ordinary $L_{\widehat{\mathfrak{g}}}(\mathcal{k},0)$-module.

This paper is organized as follows.
In Section \ref{sec:2},
we recall some facts about affine Lie superalgebra and admissible weights.
In Section \ref{sec:3},
after presenting the concepts and facts about $\mathbb{Z}$-graded vertex operator superalgebra,
we give a criterion for determining that $L(\hat{\lambda})$ is an irreducible ordinary $L_{\widehat{\mathfrak{g}}}(\mathcal{k},0)$-module.
In Section \ref{sec:4},
we classify the irreducible ordinary modules of $L_{\mathfrak{g}}(\mathcal{k},0)$ for any basic classical Lie superalgebra $\mathfrak{g}$ except $D(2|1;a)$ at the boundary principal admissible level $\mathcal{k}$.

\section{Affine Lie superalgebra}
\label{sec:2}
	\def\theequation{2.\arabic{equation}}
	\setcounter{equation}{0}

Let $\mathfrak{g}$ be a basic classical Lie superalgebra, $\mathfrak{h}$ a Cartan subalgebra of $\mathfrak{g}$.
Denote by $\Delta$ (resp. $\Delta^0,\Delta^1$) the root system (resp. the set of even root, the set of odd root),
$Q=\mathbb{Z}\Delta$ the root lattice and $W$ the Weyl group.
We fix the set of simple roots $\Pi=\{\alpha_1,\cdots,\alpha_n\}$, and let $\theta$ be the highest root,
$Q_+$ (resp. $Q_+^0$) the positive root lattice (resp. positive even root lattice),
$\mathfrak{g}=\mathfrak{n}_-\oplus\mathfrak{h}\oplus\mathfrak{n}_+$ the triangular decomposition of $\mathfrak{g}$.
Let $(\cdot,\cdot)$ be the nondegenerate even supersymmetric invariant bilinear form
which is normalised by $h^\vee\geq0$ and $(\alpha,\alpha)=2$ for a root of maximal square length,
where $h^{\vee}$ is the dual Coxeter number, i.e., the half of the eigenvalue of the Casimir operator.
Set $\alpha^{\vee}=\alpha$ for simple isotropic root $\alpha\in\Pi$, and $\alpha^{\vee}=2\alpha/(\alpha,\alpha)$
for simple non-isotropic root $\alpha\in\Pi$.
Let $\rho$ be the Weyl vector of $\mathfrak{g}$ respect to $\Pi$, i.e, $(\rho,\alpha_i)=\frac{1}{2}(\alpha_i,\alpha_i)$ for any $i=1,\cdots,n$,
we have $h^\vee=(\rho,\theta)+\frac{1}{2}(\theta,\theta)$.

Let $\widetilde{\mathfrak{g}}=\mathfrak{g}\otimes\mathbb{C}[t,t^{-1}]\oplus\mathbb{C}k$ be the affine Lie superalgebra of $\mathfrak{g}$,
we identify $\mathfrak{g}$ with $\mathfrak{g}\otimes t^0$ and
set $a(n)=a\otimes t^n$ for $a\in\mathfrak{g}$, $n\in\mathbb{Z}$ for convenience.
Define subalgebras
$N_+=\mathfrak{n}_+\oplus\mathfrak{g}\otimes t\mathbb{C}[t]$,
$B=N_+\oplus\mathfrak{h}\oplus\mathbb{C}k$ and $P=\mathfrak{g}\otimes\mathbb{C}[t]\oplus\mathbb{C}k$.
Let $\widehat{\mathfrak{g}}=\widetilde{\mathfrak{g}}\oplus\mathbb{C}d$ be the extended affine Lie superalgebra
and $\widehat{\mathfrak{h}}=\mathfrak{h}\oplus\mathbb{C}k\oplus\mathbb{C}d$ the Cartan subalgebra of $\widehat{\mathfrak{g}}$.
Let $\delta$ be the linear function on $\widehat{\mathfrak{h}}$ defined by $\delta|_{\mathfrak{h}\oplus\mathbb{C}k}=0, \delta(d)=1$.
Then $\widehat{\Pi}=\{\alpha_0=\delta-\theta,\alpha_1,\cdots,\alpha_n\}$ is a set of simple roots of $\widehat{\mathfrak{g}}$.
Define the fundamental weights $\Lambda_i\in \widehat{\mathfrak{h}}^* (i=0,1,\cdots,n)$ by $\Lambda_i(\alpha_j^{\vee})=\delta_{ij}, \Lambda_i(d)=0$ for any $i,j\in\{0,1,\cdots,n\}$.
Note that $\widehat{\rho}=h^{\vee}\Lambda_0+\rho$ is the Weyl vector of $\widehat{\mathfrak{g}}$ respect to $\widehat{\Pi}$.
Let $r_\alpha$ be the reflection with respect to an even root $\alpha$,
set $r_{\alpha}.\hat{\lambda}=r_{\alpha}(\hat{\lambda}+\widehat{\rho})-\widehat{\rho}$ for $\hat{\lambda}\in \widehat{\mathfrak{h}}^*$.
For $\beta\in \mathfrak{h}^*$, define the translation $t_\beta\in \mbox{End}~\widehat{\mathfrak{h}}^*$ by
$$t_\beta(\hat{\lambda})=\hat{\lambda}+\hat{\lambda}(k)\beta-((\hat{\lambda},\beta)+\frac{1}{2}\hat{\lambda}(k)(\beta,\beta))\delta.$$

For any $\hat{\lambda}\in \widehat{\mathfrak{h}}^*$,
denote by $M(\hat{\lambda})$(resp. $L(\hat{\lambda})$) the Verma (resp. irreducible highest weight) $\widehat{\mathfrak{g}}$-module.
We say that a $\widehat{\mathfrak{g}}$-module $M$ is from the category $\mathcal{O}$ if
the Cartan subalgebra $\widehat{\mathfrak{h}}$ acts semisimply on $M$ with finite dimensional weight spaces
and finite number of maximal weights.
It is clear that any $\widehat{\mathfrak{g}}$-module is also a $\widetilde{\mathfrak{g}}$-module,
denote by $L(\mathcal{k},\lambda)$ the $\widetilde{\mathfrak{g}}$-module $L(\hat{\lambda})$, $M(\mathcal{k},\lambda)$ the $\widetilde{\mathfrak{g}}$-module $M(\hat{\lambda})$, where $\mathcal{k}=\hat{\lambda}(k),\lambda=\hat{\lambda}|_{\mathfrak{h}}$.
For convenience, we set $\Lambda_i=\Lambda_i|_{\mathfrak{h}}(i=1,\cdots,n)$.
For any irreducible highest weight $\widehat{\mathfrak{g}}$-module $L(\hat{\lambda})$,
$L(\mathcal{k},\lambda)$ is also irreducible as $\widetilde{\mathfrak{g}}$-module.
Conversely, for any restricted $\widetilde{\mathfrak{g}}$-module $M$ of level $\mathcal{k}\neq-1$,
by the Sugawara construction we can extend $M$ to a $\widehat{\mathfrak{g}}$-module by letting $d$ acts on $M$ as $-L(0)$.
In this paper we shall consider any restricted $\widetilde{\mathfrak{g}}$-module as a $\widehat{\mathfrak{g}}$-module in this way.

Let $\mathcal{k}\in\mathbb{C}$ and $U$ a $\mathfrak{h}$-module,
$U$ can be regarded as a $B$-module by letting $N_+$ acts trivially and $k$ acts as $\mathcal{k}$,
let $M(\mathcal{k},U)=U(\widetilde{\mathfrak{g}})\otimes_{U(B)}U$.
If $U=\mathbb{C}$ is a one-dimensional $\mathfrak{h}$-module on which $h\in\mathfrak{h}$ acts as a fixed complex number $\lambda(h)$ with $\lambda\in\mathfrak{h}^*$,
the corresponding module is a Verma module denoted by $M(\mathcal{k},\lambda)$.
Then $M(\mathcal{k},\lambda)$ has an unique maximal submodule and $L(\mathcal{k},\lambda)$ is isomorphic to the corresponding irreducible quotient.
Similarly we can define the generalized Verma $\tilde{\mathfrak{g}}$-module $V(\mathcal{k},U)=U(\tilde{\mathfrak{g}})\otimes_{U(P)}U$
for any $\mathfrak{g}$-module $U$ which can be extend to a $P$-module by letting $\mathfrak{g}\otimes t\mathbb{C}[t]$
acts trivially and $k$ acts as $\mathcal{k}$.
Note that if $U=\mathbb{C}$ is the trivial $\mathfrak{g}$-module,
then $V(\mathcal{k},\mathbb{C})$ is a quotient of Verma module $M(\mathcal{k},0)$
and $L(\mathcal{k},0)$ is isomorphic to the irreducible quotient of $V(\mathcal{k},\mathbb{C})$.

For finite-dimensional simple Lie algebra $\mathfrak{g}$,
let $X=\{\lambda\in \widehat{\mathfrak{h}}^*\mid (\lambda,\alpha)\geq 0$ for all but finite number of $\alpha\in \widehat{\Delta}_+^{re}$
and $(\lambda, \alpha)\ne 0$ for any isotropic root $\alpha\}$,
$R_\lambda=\{\alpha\in\widehat{\Delta}^{re}\mid \langle \lambda, \alpha^\vee\rangle\in\mathbb{Z}\}$
and $R_\lambda^+=R_\lambda\cap\widehat{\Delta}_+^{re}$.
We call $\lambda\in \widehat{\mathfrak{h}}^*$ an admissible weight if
$\lambda+\widehat{\rho}\in X$, $\langle \lambda+\widehat{\rho}, \alpha^\vee\rangle>0$ for all $\alpha\in R_\lambda^+$
and $\mathbb{Q}R_\lambda=\mathbb{Q}\widehat{\Delta}^{re}$ (cf. \cite{KW88}).
A number $\mathcal{k}\in\mathbb{C}$ is called an admissible level for $\widehat{\mathfrak{g}}$ if $\mathcal{k}\Lambda_0$ is an admissible weight,
and $\mathcal{k}$ is called a principal admissible level if $R_{\mathcal{k}\Lambda_0}$ is isometric to $\widehat{\Delta}^{re}$.
An admissible weight $\lambda$ is called principal admissible if its level $\mathcal{k}$ is principal admissible and $R_\lambda$
is isometric to $R_{\mathcal{k}\Lambda_0}$.
The admissible weight was extended to Lie superalgebra in \cite{KRW03,KW14,GK25}.
It is clear that the definitions of boundary principal admissible level in \cite{KRW03,KW14,GK25} are coincide.
Let $\mathfrak{g}$ be a basic classical Lie superalgebra except $D(2|1;a)$,
$\mathcal{k}$ a boundary principal admissible level of $\widehat{\mathfrak{g}}$ and
$\lambda$ a principal admissible weight of level $\mathcal{k}$ in sense of \cite{GK25},
from Theorem 7.3.2 and Proposition 10.2 in \cite{GK25},
we obtain that $\Delta_{\lambda+\widehat{\rho}}$ is isometric to $\Delta_{\mathcal{k}\Lambda_0+\widehat{\rho}}$,
 then $\lambda$ is a principal admissible weight of level $\mathcal{k}$ in sense of \cite{KRW03,KW14},
the converse is obvious, hence all principal admissible weights of level $\mathcal{k}$ in sense of \cite{GK25}
coincide with that in \cite{KRW03,KW14}.
All boundary admissible levels of indecomposable non-twisted affine Kac-Moody superalgebras except $D(2|1;a)^{(1)}$ are classified in \cite{GK25}.

\begin{lem}[\cite{GK25}]\label{boundary}
Let $\mathfrak{g}$ be a basic classical Lie superalgebra
except $D(2|1;a)$, all boundary admissible level of $\widehat{\mathfrak{g}}$ are\\
(1) Principal: $\mathcal{k}=\frac{h^{\vee}}{u}-h^{\vee}$, where $u$ is a positive integer such that $(u,r^\vee)=(u,h^\vee)=1$,
where $r^\vee$ is lacety of $\mathfrak{g}^\sharp$,
i.e., the ratio of square lengths of a long root and of a short root of $\triangle^\sharp=\{\alpha\in\Delta^0~|~(\alpha,\alpha)>0\}$ (if $h^\vee$ is not integral, then $(u,h^\vee)=1$ stands for $(2u,2h^\vee)=1$);\\
(2) Subprincipal: this happens only for $\mathfrak{g}=B(0|n)$, in this case $\mathcal{k}=\frac{2n-1}{2u}-h^{\vee}$,
where $u$ is an even positive integer which is coprime with $2n-1$.\\
Moreover, if $h^\vee=0$, then $\widehat{\mathfrak{g}}$ does not admit boundary admissible levels.
\end{lem}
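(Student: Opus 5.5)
This lemma is quoted from \cite{GK25}, so the plan is to reconstruct the argument of Gorelik and Kac. The starting point is their definition of a \emph{boundary} admissible level $\mathcal{k}$. Such a level is admissible, and moreover the affine Weyl denominator identity for $\widehat{\Delta}$ transfers to $\Delta_{\mathcal{k}\Lambda_0+\widehat{\rho}}$. Concretely, $\mathcal{k}\Lambda_0+\widehat{\rho}$ is, up to a multiple of $\delta$, the image of $\widehat{\rho}$ under a map $y$ that sends a simple root system of $\widehat{\Delta}$ (or of a closely related affine root system) onto the simple roots of $\Delta_{\mathcal{k}\Lambda_0+\widehat{\rho}}$. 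I will therefore write $\Delta_{\mathcal{k}\Lambda_0+\widehat{\rho}}$ as $y(\widehat{\Pi}_u)$, where $\widehat{\Pi}_u$ is one of the admissible rescaled simple systems. There are two candidates:
\[
\widehat{\Pi}_u=\{u\delta-\theta,\alpha_1,\dots,\alpha_n\},
\]
and its dual analogue $\{u\delta-\theta_s,\ldots\}$, which uses the highest short root of $\Delta^\sharp$. The first gives the principal case and the second the subprincipal case.

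\textbf{Principal case.} Here $\mathcal{k}\Lambda_0+\widehat{\rho}=\frac{1}{u}\,y(\widehat{\rho})$ with $y=t_\beta\bar{w}$. Comparing levels gives $\mathcal{k}+h^\vee=h^\vee/u$, so $\mathcal{k}=\frac{h^\vee}{u}-h^\vee$. I then need to show when such a $y$ exists with $y(\widehat{\Pi}_u)\subset\widehat{\Delta}^+$. The lattice translations available are $t_\beta$ with $\beta$ in the coroot lattice of $\Delta^\sharp$. Since the integral root system must be isometric to $\widehat{\Delta}^{re}$, one needs $\bar{w}\rho-u\beta$ to behave like $\rho$ modulo $u$. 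This holds exactly when $\gcd(u,r^\vee)=1$, by the usual argument of Kac--Wakimoto for Lie algebras. The condition $(u,h^\vee)=1$ comes from the requirement that $\rho/u$ avoid all root hyperplanes in the right way. More precisely, for every $\alpha\in\Delta^\sharp$ the number $(\rho,\alpha)$ runs over residues, and the $\theta$-coordinate $h^\vee$ must be invertible mod $u$; otherwise some isotropic root $\alpha$ gets $(\mathcal{k}\Lambda_0+\widehat{\rho},\alpha)=0$, violating the condition in $X$.

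\textbf{Subprincipal case.} I will run the same computation with $\theta_s$. The Weyl vector then pairs as $(\rho,\theta_s)+\frac12(\theta_s,\theta_s)$, and this quantity replaces $h^\vee$. The resulting identity is consistent only when the even part $\Delta^\sharp$ has a short-root structure compatible with the odd non-isotropic roots. That restricts to $osp(1|2n)=B(0|n)$, where $\theta_s$ contributes $\frac{2n-1}{2}$. This yields $\mathcal{k}=\frac{2n-1}{2u}-h^\vee$, with $u$ even and coprime to $2n-1$.

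\textbf{The case $h^\vee=0$.} Then $\mathcal{k}=-h^\vee=0$ would be critical, which is not admissible. The main obstacle is the case-by-case verification that no other rescalings occur, and that the coprimality conditions are both necessary and sufficient across $A(m,n)$, $B$, $C$, $D$, $F(4)$ and $G(3)$. For this I would check the denominator identity, using the explicit $\rho$ and $\theta$ for the distinguished simple roots in each type.
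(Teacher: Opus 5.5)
The paper does not prove this lemma; it quotes it from \cite{GK25}. Your sketch therefore has to stand on its own, and as written it does not. Its load-bearing claim is that, for a boundary level, the integral root system of $\mathcal{k}\Lambda_0$ has simple roots $\widehat{\Pi}_u$ or a $\theta_s$-analogue, with $\mathcal{k}\Lambda_0+\widehat{\rho}$ its Weyl vector, and that nothing else can occur. You simply posit this (``there are two candidates'') and explicitly defer the check that no other rescalings occur, but that check is the classification.

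The argument you do give for $(u,h^\vee)=1$ is wrong. At a boundary level, $\mathcal{k}\Lambda_0+\widehat{\rho}=\frac{h^\vee}{u}\Lambda_0+\rho$ is the Weyl vector of $\widehat{\Pi}_u$, so it is \emph{orthogonal} to the isotropic roots in $\widehat{\Pi}_u$. In type I, with the paper's distinguished simple roots, $\theta$ is isotropic. Hence $(\rho,\theta)=h^\vee$ and $(\mathcal{k}\Lambda_0+\widehat{\rho},u\delta-\theta)=h^\vee-(\rho,\theta)=0$ for every $u$. Your criterion (zero pairing with an isotropic root violates $X$) would therefore exclude every type I boundary level, contradicting the lemma itself and the $u$ modules of Section 4. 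It also cannot explain why the condition is needed for simple Lie algebras, which have no real isotropic roots.

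For a simple Lie algebra the real mechanism is loss of regular dominance on an even root. If $d=(u,h^\vee)>1$, then $\mathcal{k}+h^\vee=\frac{h^\vee/d}{u/d}$, so $\frac{u}{d}\delta-\theta$ is integral, while $(\mathcal{k}\Lambda_0+\widehat{\rho},\frac{u}{d}\delta-\theta)=\frac{h^\vee}{d}-h^\vee+1\le 0$. This is the bound $p\ge h^\vee$ of Kac--Wakimoto. Likewise, ``$\bar w\rho-u\beta$ behaves like $\rho$ modulo $u$'' proves nothing about $(u,r^\vee)$. What one actually checks is that $\alpha+m\delta$ with $\alpha\in\Delta^\sharp$ is integral iff $u\mid m$ for long $\alpha$, and iff $\frac{u}{(u,r^\vee)}\mid m$ for short $\alpha$. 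So $R_{\mathcal{k}\Lambda_0}$ is a rescaled copy of $\widehat{\Delta}^{re}$ exactly when $(u,r^\vee)=1$.

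The subprincipal part has the same defect. Your value $(\rho,\theta_s)+\frac12(\theta_s,\theta_s)=n-\frac12$ for $osp(1|2n)$ is correct. However, ``compatible short-root structure'' does not show that no other $\mathfrak{g}$ has such levels, and you never derive that $u$ must be even. Both facts are arithmetic:
\begin{itemize}
\item For a non-simply-laced simple Lie algebra, the coprincipal analogue of the boundary numerator is the Coxeter number $h$. Since $r^\vee\mid h$ ($h=2\ell,2\ell,12,6$ for $B_\ell,C_\ell,F_4,G_2$), this is incompatible with $r^\vee\mid u$ and $(h,u)=1$. For $osp(1|2n)$ the numerator becomes $2n-1$, which is odd, so these levels survive.
\item Odd $u$ (prime to $2n-1$) is excluded because the integral root system of $\frac{2n-1}{2u}-h^\vee$ is then of principal type, and $(\mathcal{k}\Lambda_0+\widehat{\rho},u\delta-\theta)=\frac{2n-1}{2}-(n-\frac12)=0$.
\end{itemize}
A complete argument must run this analysis through each of $A(m,n)$, $B(m,n)$, $C(n+1)$, $D(m,n)$, $F(4)$, $G(3)$. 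In these types $\langle\rho,\alpha^\vee\rangle$ need not be an integer and $\Delta^0$ has a component outside $\Delta^\sharp$. It must also prove sufficiency, i.e.\ that each listed level really is admissible and boundary in the sense of \cite{GK25}. None of this is in the proposal.
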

For positive integer $u$, let $\widehat{\Pi}_u=\{u\delta-\theta,\alpha_1,\cdots,\alpha_n\}$.
Then we have the following lemma, which gives all boundary principal admissible weights.
\begin{lem}[\cite{KW89,KW14}]\label{lemboundary}
Let $\mathfrak{g}$ be a basic classical Lie superalgebra, $\mathcal{k}=\frac{h^{\vee}}{u}-h^{\vee}$ a boundary principal admissible level of $\widehat{\mathfrak{g}}$,
then all principal admissible weights of level $\mathcal{k}$ are of the form (up to adding a multiple of $\delta$)
$$\Lambda=(t_\beta y).(\mathcal{k}\Lambda_0),$$
where $\beta\in \mathfrak{h}^*, y\in W$ such that $(t_\beta y)\widehat{\Pi}_u\subset\widehat{\Delta}_+$.
\end{lem}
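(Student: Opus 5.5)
The plan is to follow the Kac--Wakimoto description of principal admissible weights and specialize it to the boundary case, where the integer lattice of coroots is as small as it can be. Write $\Lambda$ for a principal admissible weight of level $\mathcal{k}=\frac{h^\vee}{u}-h^\vee$. By the general structure theorem (the principal case of \cite{KW89}, extended to superalgebras in \cite{KW14}), the set $\widehat{\Delta}_{\Lambda}$ of real roots $\alpha$ with $\langle\Lambda+\widehat{\rho},\alpha^\vee\rangle\in\mathbb{Z}$ is $w$-conjugate to the root system with simple roots $\widehat{\Pi}_u$ attached to $\mathcal{k}\Lambda_0$. More precisely, there is an element $\tilde w=t_\beta y$ of the extended affine Weyl group ($y\in W$, $\beta$ in the appropriate coweight-type lattice) such that $\tilde w(\widehat{\Pi}_u)$ is the set of simple roots of $\widehat{\Delta}_{\Lambda}^+$. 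Since $\widehat{\Delta}_\Lambda^+\subset\widehat{\Delta}_+$, this forces $(t_\beta y)\widehat{\Pi}_u\subset\widehat{\Delta}_+$. In addition,
$$\Lambda+\widehat{\rho}=\tilde w\bigl(\Lambda^0+\widehat{\rho}\bigr)$$
for a weight $\Lambda^0$ whose restriction to the coroots of $\widehat{\Pi}_u$ is dominant integral.

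The boundary condition now pins down $\Lambda^0$. At boundary level we have $\mathcal{k}+h^\vee=h^\vee/u$, and pairing $\mathcal{k}\Lambda_0+\widehat{\rho}$ with the coroots of $\widehat{\Pi}_u$ gives the value $1$ on each $\alpha_i$, $i\ge1$. On $u\delta-\theta$ it gives
$$u(\mathcal{k}+h^\vee)-(\rho,\theta^\vee)=h^\vee-(\rho,\theta)=1,$$
where the last step uses $(\theta,\theta)=2$ and the identity $h^\vee=(\rho,\theta)+\frac12(\theta,\theta)$. Because the sum of the coefficients of the coroots of $\widehat{\Pi}_u$ (with the $\delta$-weighting) equals the level $\mathcal{k}+h^\vee$, and each coefficient must be a positive integer, every one of them is forced to equal $1$. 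Hence $\Lambda^0+\widehat{\rho}=\mathcal{k}\Lambda_0+\widehat{\rho}$ modulo $\mathbb{C}\delta$, and therefore $\Lambda=(t_\beta y).(\mathcal{k}\Lambda_0)$ up to a multiple of $\delta$.

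Conversely, given any $\beta$ and $y$ with $(t_\beta y)\widehat{\Pi}_u\subset\widehat{\Delta}_+$, we check the admissibility conditions for $\Lambda=(t_\beta y).(\mathcal{k}\Lambda_0)$. The integral root system of $\Lambda$ is $(t_\beta y)$ applied to that of $\mathcal{k}\Lambda_0$, so it is isometric to it. Positivity of $\langle\Lambda+\widehat\rho,\alpha^\vee\rangle$ on $R_\Lambda^+$ follows from the positivity condition $(t_\beta y)\widehat{\Pi}_u\subset\widehat{\Delta}_+$. Finally, for isotropic roots, the nonvanishing condition $(\Lambda+\widehat\rho,\alpha)\neq0$ transfers from $\mathcal{k}\Lambda_0$ via $\tilde w$, since $\tilde w$ preserves the form and the isotropic roots.

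The main obstacle is the superalgebra case. There, odd roots and the non-uniqueness of Borel subalgebras complicate the identification of the simple roots of $\widehat{\Delta}_\Lambda$, and they also complicate the coprimality arithmetic $(u,r^\vee)=(u,h^\vee)=1$ that ensures $\widehat{\Pi}_u$ really is a simple system for $\widehat{\Delta}_{\mathcal{k}\Lambda_0}$. For that step I would rely on \cite{KW14} (and Theorem 7.3.2 of \cite{GK25}, cited above) rather than reprove it.
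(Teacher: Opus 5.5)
Your route is the standard one behind \cite{KW89}: write $\Lambda+\widehat{\rho}=\tilde w(\Lambda^0+\widehat{\rho})$ and use that at boundary level the integrable part has level $u(\mathcal{k}+h^\vee)-h^\vee=0$, so it is trivial. The paper itself gives no proof and simply cites \cite{KW89,KW14}. For a simple Lie algebra your counting version works once it is stated correctly. Put $c_i=\langle\Lambda^0+\widehat{\rho},\gamma_i^\vee\rangle\in\mathbb{Z}_{>0}$ for $\gamma_i\in\widehat{\Pi}_u$. Then $\sum_{i=0}^n a_i^\vee c_i=u(\mathcal{k}+h^\vee)=h^\vee$, where the dual marks satisfy $a_i^\vee>0$ and $\sum_i a_i^\vee=h^\vee$. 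The right-hand side is $u$ times the shifted level, not the level $\mathcal{k}+h^\vee$ you wrote. The same argument works for $osp(1|2n)$, where $\widehat{\Pi}_u$ has no isotropic roots.

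The genuine gap is the superalgebra case, and it lies in this rigidity step, not only in identifying the simple roots of $\widehat{\Delta}_\Lambda$. Your counting needs three things for every $\gamma\in\widehat{\Pi}_u$:
\begin{itemize}
\item $\langle\mathcal{k}\Lambda_0+\widehat{\rho},\gamma^\vee\rangle=1$;
\item $\gamma$ carries a positive weight in the identity;
\item admissibility forces its pairing with $\Lambda^0+\widehat{\rho}$ into $\mathbb{Z}_{>0}$.
\end{itemize}
For every basic classical superalgebra other than $osp(1|2n)$ this breaks down.
\begin{itemize}
\item $\Pi$ contains an isotropic root $\alpha$: $\alpha_n$ for $sl(n|m)$, $osp(2m+1|2n)$ and $osp(2m|2n)$, and $\alpha_1$ for $osp(2|2n)$, $F(4)$ and $G(3)$. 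For it, $(\mathcal{k}\Lambda_0+\widehat{\rho},\alpha)=(\rho,\alpha)=0$.
\item For $sl(n|m)$ and $osp(2|2n)$ the highest root $\theta$ is itself odd isotropic. Hence $(\mathcal{k}\Lambda_0+\widehat{\rho},u\delta-\theta)=h^\vee-(\rho,\theta)=\frac12(\theta,\theta)=0$, not $1$.
\item For $B(m,n)$ and $D(m,n)$ with $m>n$, and for $F(4)$ and $G(3)$, one has $(\theta,\theta)<0$. So the assumption $(\theta,\theta)=2$ in your computation fails there.
\item Admissibility gives no positivity for isotropic pairings; they vanish for $\mathcal{k}\Lambda_0$ itself.
\item Simple roots of negative square length enter the identity with negative weight.
\end{itemize}
For $sl(n|m)$, with $\mu=\Lambda^0+\widehat{\rho}$ and $\gamma^\vee=\gamma$ for isotropic $\gamma$, the identity reads $c_0+\sum_{i<n}c_i+c_n-\sum_{i>n}c_i=n-m$. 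Here $c_0=(\mu,u\delta-\theta)$ and $c_n=(\mu,\alpha_n)$, and the identity forces nothing. So the conclusion $\Lambda^0\equiv\mathcal{k}\Lambda_0$ modulo $\mathbb{C}\delta$ has to come from the super-specific theory: the boundary-level description in \cite{KW14}, and Theorem 7.3.2 with Proposition 10.2 of \cite{GK25}. Your argument does not replace that input.

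Likewise, in your converse the condition $(\Lambda+\widehat{\rho},\alpha)\neq0$ for isotropic $\alpha$ already fails for $\Lambda=\mathcal{k}\Lambda_0$. So it cannot be the condition that transfers under $t_\beta y$.
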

Since $(t_\beta y)(u\delta-\theta)=-y\theta+(u+(\beta,y\theta))\delta$ and
$(t_\beta y)\alpha_i=y\alpha_i-(\beta,y\alpha_i)\delta$, where $i=1,\cdots,n$,
then we have $(t_\beta y)\widehat{\Pi}_u\subset\widehat{\Delta}_+$ if and only if
$\begin{cases}
   u+(\beta,y\theta)\in\mathbb{Z}_{\geq0}, & \text{if}~ y\theta\in -\Delta_+,\\
   u+(\beta,y\theta)\in\mathbb{Z}_{>0},  & \text{if } y\theta\in \Delta_+,
\end{cases}$
and for any $i=1,\cdots,n$,
$\begin{cases}
  -(\beta,y\alpha_i)\in\mathbb{Z}_{\geq0}, & \text{if}~ y\alpha_i\in \Delta_+,\\
  -(\beta,y\alpha_i)\in\mathbb{Z}_{>0},  & \text{if } y\alpha_i\in -\Delta_+.
\end{cases}$

\section{Affine Vertex operator superalgebra}
\label{sec:3}
	\def\theequation{3.\arabic{equation}}
	\setcounter{equation}{0}
In this section, we first recall some concepts and facts about $\mathbb{Z}$-graded vertex operator superalgebra.
Then for affine vertex operator superalgebra $L_{\widehat{\mathfrak{g}}}(\mathcal{k},0)$,
we give the criterion for determining that $L(\hat{\lambda})$ is an irreducible ordinary $L_{\widehat{\mathfrak{g}}}(\mathcal{k},0)$-module.

\subsection{$\mathbb{Z}$-graded vertex operator superalgebra}
\begin{defi}
{\em  Let $V$ be a $\mathbb{Z}$-graded vertex operator superalgebra.
A {\em weak $V$-module} is a module for $V$ as vertex superalgebra.
A weak $V$-module $M$ is called a {\em $\mathbb{Z}_{\geq 0}$-graded} {\em weak module}
if $M$ has a $\mathbb{Z}_ {\geq 0}$-gradation $M=\bigoplus_{n\in\mathbb{Z}_{\geq 0}}M(n)$ such that
$a_mM(n)\subseteq M(\mbox{wt}a+n-m-1)$
for any homogeneous element $a\in V, m\in\mathbb{Z}, n\in\mathbb{Z}_{\geq 0}$.
A weak $V$-module $M$ is called an {\em ordinary module}
if $M=\bigoplus_{\lambda\in\mathbb{C}}M_{\lambda}$, where $M_{\lambda}=\{w\in M \mid L(0)w=\lambda w\}$ such that
$\mbox{dim}~M_{\lambda}<\infty$ for all $\lambda\in\mathbb{C}$ and
$M_{\lambda}=0$ for the real part of $\lambda$ sufficiently small.}
\end{defi}

Let $V$ be a $\mathbb{Z}$-graded vertex operator superalgebra.
For any homogeneous element $a\in V$, we define
\begin{equation*}
a\ast b=\mbox{Res}_z\frac{(1+z)^{{\rm wt}a}}{z}Y(a,z)b
\end{equation*}
for any $b\in V$,
then we can extend $\ast$ on $V$.
Let $O(V)$ be the subspace of $V$ linearly spanned by
\begin{equation*}
\mbox{Res}_z\frac{(1+z)^{{\rm wt}a}}{z^2}Y(a,z)b
\end{equation*}
for any homogeneous element $a\in V$ and any $b\in V$.
Let $M$ be any weak $V$-module, we define
$\Omega(M)=\{u\in M\mid a_m u=0~~\mbox{for}~~a\in V, m>\mbox{wt}a-1\}$,
and $o$ to be the linear map from $V$ to $\mbox{End}~\Omega(M)$ such that $o(a)=a_{{\rm wt}a-1}$
for any homogeneous element $a\in V$.
Then we have the following theorem.
\begin{thm}[\cite{FZ92,Zhu96}]\label{thmzhu}
Let $V$ be a $\mathbb{Z}$-graded vertex operator superalgebra and $M$ a weak $V$-module.\\
{\rm (a)} The subspace $O(V)$ is a two-sided ideal of $V$ with respect to the product $\ast$
and $A(V)=V/O(V)$ is an associative superalgebra with identity $\textbf{1}+O(V)$.
Moreover, $\omega+O(V)$ lies in the center of $A(V)$.\\
{\rm (b)} $\Omega(M)$ is an $A(V)$-module with $a$ acts as $o(a)$ for any $a\in V$.\\
{\rm (c)} There is an induction functor $L$ from the category of $A(V)$-modules to the category of $\mathbb{Z}_{\geq 0}$-graded weak $V$-modules.
Moreover, $\Omega(L(U))=U$ for any $A(V)$-module $U$.\\
{\rm (d)} $\Omega$ and $L$ are inverse bijections between the sets of irreducible modules in each category.\\
\end{thm}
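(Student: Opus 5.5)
The statement is Zhu's theorem in the super setting, and I will prove it by adapting the arguments of \cite{FZ92,Zhu96} to the $\mathbb{Z}$-graded superalgebra case. Throughout, I keep track of parity signs. Since $V$ is $\mathbb{Z}$-graded, the weights $\mathrm{wt}\,a$ are integers, so $(1+z)^{\mathrm{wt}a}$ is a genuine Laurent polynomial and no half-integer twist enters. The only new ingredient compared with the even case is the sign $(-1)^{|a||b|}$ in skew-symmetry and in the Jacobi identity.

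For (a), I will proceed in the following order.

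First, using the $L(-1)$-derivative property, I show that
\[
\mathrm{Res}_z\frac{(1+z)^{\mathrm{wt}a}}{z^{2+m}}Y(a,z)b\in O(V)\quad\text{for all } m\ge 0 .
\]
The proof is by induction on $m$, using $(L(-1)+L(0))a$.

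Second, I show that $O(V)$ is a left ideal for $\ast$. The key step is the identity
\[
a\ast O(V)\subset O(V),
\]
which I will derive from the super Jacobi identity in residue form.

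Third, I use super skew-symmetry, $Y(a,z)b=(-1)^{|a||b|}e^{zL(-1)}Y(b,-z)a$, to show that
\[
a\ast b-(-1)^{|a||b|}b\ast a\equiv \mathrm{Res}_z(1+z)^{\mathrm{wt}a-1}Y(a,z)b \pmod{O(V)}.
\]
From this congruence it follows that $O(V)$ is also a right ideal.

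Fourth, I verify associativity modulo $O(V)$ by Zhu's residue computation. I also check that $\mathbf 1$ acts as the identity, using $\mathbf 1\ast a=a$ and $a\ast\mathbf 1\equiv a$.

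Fifth, centrality of $\omega$ comes from the commutator formula above with $a=\omega$. In that case $\mathrm{Res}_z(1+z)Y(\omega,z)b=(L(-1)+L(0))b$, which lies in $O(V)$.

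For (b), I first note that $\Omega(M)$ is preserved by $o(a)$; this follows from the commutator formula. I then check two things, both by taking the Jacobi identity on vectors killed by the positive-degree modes:
\[
o(a\ast b)=o(a)o(b)\ \text{on }\Omega(M),\qquad o(O(V))=0.
\]
For the first, the terms with $a_{\mathrm{wt}a-1+i}$, $i>0$, vanish on $\Omega(M)$. For the second, the binomial expansion of the defining residue collapses to an expression that annihilates $\Omega(M)$.

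For (c), I construct $L(U)$ following Zhu and Dong--Li--Mason, in three steps. I form the Lie superalgebra
\[
\widehat V=V\otimes\mathbb{C}[t,t^{-1}]/D\,V\otimes\mathbb{C}[t,t^{-1}],
\]
graded by $\deg(a\otimes t^n)=\mathrm{wt}a-n-1$. I let $\widehat V_{<0}$ act on $U$ by zero and the degree-zero part act through $A(V)$. I then induce $U$ to a $\widehat V$-module and quotient by the subspace generated by the Jacobi (associativity) relations, to obtain a genuine weak $V$-module $L(U)$ with $L(U)(0)=U$. Taking the further quotient by the maximal graded submodule intersecting $U$ trivially gives $\Omega(L(U))=U$, after replacing $L(U)$ by that quotient.

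The main obstacle is proving that the quotient by the associativity relations does not kill $U$. The even proof reduces this to an explicit check, via Zhu's recursion formula, that the relations restricted to degree $0$ are already implied by the $A(V)$-module structure. I will redo that recursion with parity signs inserted, and rely on the fact that all sign factors are consistent because the $\ast$-product and the Jacobi identity carry the same signs.

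Part (d) then follows formally. If $U$ is irreducible, then $L(U)$, being the quotient by the maximal submodule meeting $U$ trivially, is irreducible. Conversely, for an irreducible $\mathbb{Z}_{\ge0}$-graded $M$, the space $\Omega(M)\supset M(0)$ is an irreducible $A(V)$-module, and $M\cong L(\Omega(M))$ by the universal property of the induction.
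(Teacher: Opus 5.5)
The paper gives no proof of this statement. It quotes it from \cite{FZ92,Zhu96}, which treat purely even vertex operator algebras, and then applies it to the affine vertex operator superalgebras of Section 3 without discussing signs. Your proposal is the standard Zhu argument for (a) and (b), and the Dong--Li--Mason induced-module construction for (c) and (d), with parity signs inserted. It is correct in outline and supplies what the citation tacitly assumes. You also identify correctly why the $\mathbb{Z}$-grading matters: odd vectors have integral weight, so $\ast$ and $O(V)$ need no half-integral modification. The citation buys brevity; your route buys an actual check that the signs are harmless.

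Three places should be made explicit.

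First, write $a\circ b=\mathrm{Res}_z\frac{(1+z)^{\mathrm{wt}a}}{z^2}Y(a,z)b$. In your second step, the commutator computation of $c\ast(a\circ b)$ produces $(-1)^{|a||c|}a\circ(c\ast b)$ plus a linear combination of terms $\mathrm{Res}_z\frac{(1+z)^{\mathrm{wt}d+n}}{z^{2+n}}Y(d,z)b$, with $d=c_{i+j}a$ and $n=j+1\geq 1$. These lie in $O(V)$ by your first step once you expand $(1+z)^{n}$, and that reduction should be stated.

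Second, the congruence in your third step together with the left-ideal property does not by itself give $O(V)\ast V\subset O(V)$. You also need $\mathrm{Res}_z(1+z)^{\mathrm{wt}c-1}Y(c,z)\,O(V)\subset O(V)$. This follows from the same commutator computation, which now yields $(-1)^{|a||c|}a\circ(\cdots)$ plus exactly $\sum_i\binom{\mathrm{wt}c-1}{i}(c_ia)\circ b$.

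Third, two facts you need for the bijection rest on one observation you should state: $\Omega(L(U))=U$ in (c), and $\Omega(M)=M(0)$ in (d) after normalizing $M(0)\neq 0$. The observation is this. A nonzero homogeneous $v\in\Omega(M)$ of positive degree is killed by all negative-degree modes. By PBW it therefore generates a graded submodule concentrated in degrees $\geq \deg v$. In $L(U)$ such a submodule is zero, by maximality of the radical. In an irreducible $M$ it would be a nonzero proper submodule, a contradiction.
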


\subsection{Ordinary $L_{\widehat{\mathfrak{g}}}(\mathcal{k},0)$-modules }
We know that for $\mathcal{k}\neq -h^\vee$, both
$V(\mathcal{k},\mathbb{C})$ and $L(\mathcal{k},0)$ have natural
$\mathbb{Z}$-graded vertex operator superalgebra structures
and any $\widetilde{\mathfrak{g}}$-module $M(\mathcal{k},U)$ is a weak module for $V(\mathcal{k},\mathbb{C})$,
we denote by $V_{\widehat{\mathfrak{g}}}(\mathcal{k},0)$ the universal affine vertex operator superalgebra $V(\mathcal{k},\mathbb{C})$
and $L_{\widehat{\mathfrak{g}}}(\mathcal{k},0)$ the simple affine vertex operator superalgebra $L(\mathcal{k},0)$.
Let $\mathbf{1}$ be the vacuum vector of $V_{\widehat{\mathfrak{g}}}(\mathcal{k},0)$.
For $a\in V_{\widehat{\mathfrak{g}}}(\mathcal{k},0)$, denote by $\overline{a}$ the image of $a$ under the projection of $V_{\widehat{\mathfrak{g}}}(\mathcal{k},0)$ onto $A(V_{\widehat{\mathfrak{g}}}(\mathcal{k},0))$.
Similar to Theorem 3.1.1 of \cite{FZ92}, we have the following lemma.
\begin{lem}\label{lemzhu}
The associative superalgebra $A(V_{\widehat{\mathfrak{g}}}(\mathcal{k},0))$ is canonically isomorphic to $U(\mathfrak{g})$.
The isomorphism is given by
\begin{equation*}
\begin{aligned}
F:~~~~~~~~~~~~~~~~~~~~~~A(V_{\widehat{\mathfrak{g}}}(\mathcal{k},0))&\rightarrow U(\mathfrak{g})\\
\overline{a_1(-n_1-1)\cdots a_m(-n_m-1)\mathbf{1}}&\mapsto (-1)^{\sum_{1\leq i<j\leq m}|a_i||a_j|+\sum_{i=1}^{m} n_i}a_m\cdots a_1,
\end{aligned}
\end{equation*}
where $a_1,\cdots,a_m\in\mathfrak{g},n_1,\cdots,n_m\in\mathbb{Z}_{\geq 0}$.
\end{lem}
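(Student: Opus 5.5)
The plan is to follow the argument of Frenkel--Zhu (Theorem 3.1.1 of \cite{FZ92}), adapted to the super setting. First we define the map in the opposite direction, and then we show it is an inverse to $F$.

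First, by the PBW theorem for $U(\widetilde{\mathfrak{g}})$, the universal module $V_{\widehat{\mathfrak{g}}}(\mathcal{k},0)=V(\mathcal{k},\mathbb{C})$ is spanned by monomials $a_1(-n_1-1)\cdots a_m(-n_m-1)\mathbf{1}$. In fact it is isomorphic to $U(\mathfrak{g}\otimes t^{-1}\mathbb{C}[t^{-1}])$ as a vector superspace. This makes the map $F$ well defined on $V$ itself, provided we check it is compatible with the supercommutation relations among the modes $a(-n)$. The sign $(-1)^{\sum_{i<j}|a_i||a_j|}$ accounts for the reversal of order of odd elements. Next we compute in $A(V)$, using the standard identities valid for any $\mathbb{Z}$-graded vertex operator superalgebra:
\[
\overline{a(-n-2)b}+\overline{a(-n-1)b}\equiv 0 \pmod{O(V)},\qquad
\overline{a(-1)b}\equiv \overline{b}\ast\overline{a}\cdot(\pm)\ \text{(up to supersign)},
\]
together with $a\ast b-(-1)^{|a||b|}b\ast a\equiv \mathrm{Res}_z(1+z)^{\mathrm{wt}a-1}Y(a,z)b$. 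For $a\in\mathfrak{g}$ (weight one) we have $a=a(-1)\mathbf{1}$, and the first identity gives $\overline{a(-n-1)b}=(-1)^n\overline{a(-1)b}$. Combined with $\overline{a(-1)b}=(-1)^{|a||b|}\overline{b}\ast\overline{a}$ up to the bracket term, this shows by induction on $m$ that $\overline{a_1(-n_1-1)\cdots a_m(-n_m-1)\mathbf{1}}$ is $(-1)^{\sum n_i+\sum_{i<j}|a_i||a_j|}\,\overline{a_m}\ast\cdots\ast\overline{a_1}$. Moreover, $\overline{a}\ast\overline{b}-(-1)^{|a||b|}\overline{b}\ast\overline{a}=\overline{[a,b]}$. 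Hence the map $G:U(\mathfrak{g})\to A(V)$, $a\mapsto\overline{a(-1)\mathbf{1}}$, is a well-defined surjective superalgebra homomorphism. It is well defined because the defining relations of $U(\mathfrak{g})$ hold, and it is surjective because the elements $\overline{a_i}$ generate $A(V)$.

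For injectivity we use Theorem~\ref{thmzhu}(b) with representation-theoretic modules rather than computing $O(V)$ directly. For any $\mathfrak{g}$-module $U$, the generalized Verma module $V(\mathcal{k},U)$ is a weak $V_{\widehat{\mathfrak{g}}}(\mathcal{k},0)$-module, and $U\subseteq\Omega(V(\mathcal{k},U))$. By Theorem~\ref{thmzhu}(b), $A(V)$ acts on $U$ via $o(\cdot)$, and $o(a)=a(0)$ acts as $a$. Hence $o(G(x))$ acts on $U$ as $x$ for all $x\in U(\mathfrak{g})$. Taking $U=U(\mathfrak{g})$ to be the left regular module, $G(x)=0$ forces $x=0$. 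Therefore $G$ is an isomorphism, and $F$ is its inverse, because the two agree on the spanning monomials by the formula above.

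The main obstacle is the careful bookkeeping of supersigns in the identity relating $\overline{a(-1)b}$ to $\ast$-products, and in the claim that $O(V)$ imposes no relations beyond those of $U(\mathfrak{g})$. Working in the super setting, we verify that the residue formulas of \cite{Zhu96} hold with Koszul signs. The first thing to try is to check the two-monomial case explicitly and then induct on $m$.
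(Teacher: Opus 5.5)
Your proposal is correct and is essentially the argument the paper relies on: the paper gives no proof beyond saying the lemma follows as in Frenkel--Zhu's Theorem 3.1.1. Your super-adaptation is exactly that argument with Koszul signs. Zhu's identities give a surjective superalgebra map $G\colon U(\mathfrak{g})\to A(V_{\widehat{\mathfrak{g}}}(\mathcal{k},0))$, and injectivity follows from the action of $A(V_{\widehat{\mathfrak{g}}}(\mathcal{k},0))$ on $U(\mathfrak{g})\subseteq\Omega(V(\mathcal{k},U(\mathfrak{g})))$. Two small remarks: you never need $F$ to be well defined on $V$ separately, since $F=G^{-1}$; and $\overline{a(-1)b}=(-1)^{|a||b|}\,\overline{b}\ast\overline{a}$ holds exactly for $a\in\mathfrak{g}$, with no bracket correction.
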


For the weak $L_{\widehat{\mathfrak{g}}}(\mathcal{k},0)$-modules in the category $\mathcal{O}$, we have the following theorem.
Arakawa proved the case for $\mathfrak{g}$ being any finite dimensional simple Lie algebra in \cite{Ar16};
Gorelik and Kac proved the corresponding results for $\mathfrak{g}$ being any basic classical Lie superalgebra
except $D(2|1;a)$ in \cite{GK25}.
\begin{thm}[\cite{Ar16,GK25}]\label{thmar16}
Let $\mathfrak{g}$ be a basic classical Lie superalgebra
except $D(2|1;a)$, $\mathcal{k}$ a boundary principal admissible level of $\widehat{\mathfrak{g}}$ and $\hat{\lambda}\in \widehat{\mathfrak{h}}^*$.
Then $L(\hat{\lambda})$ is a weak $L_{\widehat{\mathfrak{g}}}(\mathcal{k},0)$-module if and only if
$\hat{\lambda}$ is a principal admissible weight of level $\mathcal{k}$.
\end{thm}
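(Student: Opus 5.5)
The plan is to follow the strategy of \cite{Ar16}, adapted to superalgebras as in \cite{GK25}. Everything is reduced to two facts about the maximal ideal $N_{\mathcal{k}}$ of $V_{\widehat{\mathfrak{g}}}(\mathcal{k},0)$. First, it is generated by a single singular vector $v_{\mathcal{k}}$. Second, its image in Zhu's algebra is understood. By Theorem \ref{thmzhu} and Lemma \ref{lemzhu}, $A(L_{\widehat{\mathfrak{g}}}(\mathcal{k},0))\cong U(\mathfrak{g})/I_{\mathcal{k}}$, where $I_{\mathcal{k}}=F(\overline{N_{\mathcal{k}}})$ is a two-sided ideal. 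Consequently $L(\hat{\lambda})$, with $\lambda=\hat{\lambda}|_{\mathfrak{h}}$, is a weak $L_{\widehat{\mathfrak{g}}}(\mathcal{k},0)$-module if and only if $L(\hat{\lambda})$ is annihilated by $N_{\mathcal{k}}$.

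For an irreducible highest weight module this condition is more than a condition on the top level $L(\lambda)$. It must hold for the top level of every twist of $L(\hat{\lambda})$ by automorphisms of $\widehat{\mathfrak{g}}$ corresponding to elements of the extended affine Weyl group, since $N_{\mathcal{k}}$ is stable under them. So the first step is to show that $N_{\mathcal{k}}$ is generated by one singular vector. At a boundary level one has the Kac--Wakimoto boundary character formula: $\mathrm{ch}\,L(\mathcal{k}\Lambda_0)$ is a single alternating sum over the Weyl group of $\widehat{\Pi}_u$. Comparing this formula with the character of $V_{\widehat{\mathfrak{g}}}(\mathcal{k},0)$ identifies the first singular vector as $v_{\mathcal{k}}=e_{-\theta}(-1)^{N}\mathbf{1}$ (or its odd analogue). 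This vector has weight $r_{u\delta-\theta}.(\mathcal{k}\Lambda_0)$, and the same comparison shows that it generates the whole maximal ideal.

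Next I would compute the image of the zero mode of $v_{\mathcal{k}}$ on highest weight vectors, in the style of Adamovi\'{c}--Milas. For every $w$ in the affine Weyl group this gives a polynomial $p_w(\lambda)$, and the condition is that $L(\hat{\lambda})$ is a module if and only if $p_w(\hat{\lambda})=0$ for all $w$. Each $p_w$ factors into linear forms of the type $\langle\hat{\lambda}+\widehat{\rho},\gamma^\vee\rangle-j$. Solving the whole system should then say that some $t_\beta y$ maps $\widehat{\Pi}_u$ into the set of coroots on which $\hat{\lambda}+\widehat{\rho}$ takes positive integer values. By Lemma \ref{lemboundary}, this is exactly the statement that $\hat{\lambda}$ is principal admissible of level $\mathcal{k}$, which gives the ``only if'' direction.

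For the converse I would avoid explicit polynomials. Instead I would use the Kac--Wakimoto character formula for principal admissible modules of level $\mathcal{k}$ together with the associated-variety argument of \cite{Ar16}. The idea is that $L(\hat{\lambda})$ is annihilated by $N_{\mathcal{k}}$ as soon as its $(\mathfrak{g}\otimes t^{-1}\mathbb{C}[t^{-1}])$-coinvariants are supported on the nilpotent orbit variety $X_{L(\mathcal{k}\Lambda_0)}$. At boundary level this should follow from the single-term shape of both characters. The superalgebra case needs an additional check that the odd generators do not enlarge the annihilator, using the description of $\Delta_{\lambda+\widehat{\rho}}$ in \cite{GK25}.

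The main obstacle is the generation statement for $N_{\mathcal{k}}$ together with the resulting Zhu ideal. For superalgebras the Weyl group sees only the even part, and isotropic roots break the usual Kac--Kazhdan argument. The first thing I would try is the boundary character identity of \cite{KW14}, written as a denominator-type identity, to show that the quotient by the submodule generated by $v_{\mathcal{k}}$ already has the character of $L(\mathcal{k}\Lambda_0)$. The excluded case $D(2|1;a)$ is exactly where I would expect this identity, or the classification of boundary levels in Lemma \ref{boundary}, to be unavailable.
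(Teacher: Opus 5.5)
The paper gives no proof of this theorem; it quotes it from \cite{Ar16} for Lie algebras and from \cite{GK25} for the superalgebras. Your outline therefore has to carry the whole argument, and in both directions the decisive step is only asserted. Your reduction is also slightly off. By Theorem \ref{thmzhu}(d), applied to $V_{\widehat{\mathfrak g}}(\mathcal k,0)$ and to $L_{\widehat{\mathfrak g}}(\mathcal k,0)$, the module $L(\hat\lambda)$ is an $L_{\widehat{\mathfrak g}}(\mathcal k,0)$-module if and only if its top level $L(\lambda)$ is annihilated by $I_{\mathcal k}$; twisted modules play no role. Because $I_{\mathcal k}$ is two-sided and $L(\lambda)$ is irreducible, this is equivalent to the Harish-Chandra projections of the weight-zero elements of $I_{\mathcal k}$ vanishing at $\lambda$. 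In the Adamovi\'c--Milas method these polynomials come from the zero-weight space of the finite-dimensional $\mathfrak g$-module $U(\mathfrak g)v_{\mathcal k}$, not from affine Weyl group elements. Writing them down requires $v_{\mathcal k}$ explicitly, together with its image in $U(\mathfrak g)$, which is out of reach beyond small rank. So ``each $p_w$ factors into linear forms'' and ``solving the system gives exactly the principal admissible weights'' are the theorem itself, not a route to it. Your candidate vector is also wrong. At boundary level $\langle\mathcal k\Lambda_0+\widehat{\rho},(u\delta-\theta)^\vee\rangle=1$, and $(\mathcal k\Lambda_0+\widehat{\rho},u\delta-\theta)=0$ when $\theta$ is isotropic. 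Either way the singular vector has weight $\mathcal k\Lambda_0+\theta-u\delta$, i.e.\ $\mathfrak h$-weight $\theta$ and degree $u$. It cannot be $e_{-\theta}(-1)^N\mathbf 1$, whose weight is $\mathcal k\Lambda_0-N\theta-N\delta$.

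For the converse, $X_{L(\hat\lambda)}\subseteq X_{L_{\widehat{\mathfrak g}}(\mathcal k,0)}$ is only a necessary condition. The associated variety sees only the radical of the image of $N_{\mathcal k}$ in the $C_2$-quotient. You give no mechanism that upgrades this containment to $N_{\mathcal k}\cdot L(\hat\lambda)=0$, and ``the single-term shape of both characters'' is not one. The missing ingredient is a way to determine the Harish-Chandra image of the entire ideal $I_{\mathcal k}$ (Joseph's characteristic variety) without knowing $v_{\mathcal k}$. \cite{Ar16} does this with a semi-infinite restriction functor whose value on $L(\mathcal k\Lambda_0)$ is read off from the Kac--Wakimoto character formula, and this yields both implications at once. \cite{GK25} supplies the corresponding analysis in the super case.
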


Then we can furthermore present the characterization of irreducible ordinary modules based on the Theorem \ref{thmar16}.
\begin{lem}\label{lemor}
Let $\mathfrak{g}$ be a basic classical Lie superalgebra
except $D(2|1;a)$, $\mathcal{k}$ a boundary principal admissible level of $\widehat{\mathfrak{g}}$.
Then the set $\{L(\hat{\lambda})~|~\hat{\lambda}\in \widehat{\mathfrak{h}}^*$ is a principal admissible weight of level $\mathcal{k}$ such that $L(\lambda)$
is a finite-dimensional $\mathfrak{g}$-module, where $\lambda=\hat{\lambda}|_{\mathfrak{h}}\}$
provides the complete list of irreducible ordinary $L_{\widehat{\mathfrak{g}}}(\mathcal{k},0)$-modules.
\end{lem}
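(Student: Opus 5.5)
The proof has two directions, and in both it pairs Zhu's correspondence (Theorem \ref{thmzhu}) with the category-$\mathcal{O}$ classification (Theorem \ref{thmar16}).

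\emph{Sufficiency.} Let $\hat{\lambda}$ be a principal admissible weight of level $\mathcal{k}$ such that $L(\lambda)$ is finite-dimensional. By Theorem \ref{thmar16}, $L(\hat{\lambda})$ is a weak $L_{\widehat{\mathfrak{g}}}(\mathcal{k},0)$-module. It remains to check the ordinary conditions. Since $\mathcal{k}\neq-h^\vee$, the Sugawara operator $L(0)$ acts on the highest weight vector by
\[
\frac{(\lambda,\lambda+2\rho)}{2(\mathcal{k}+h^\vee)}.
\]
Every vector of $L(\hat{\lambda})$ is a combination of monomials $a_1(-n_1)\cdots a_r(-n_r)$ applied to the top level. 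It follows that $L(0)$ acts semisimply, with eigenvalues $h_\lambda+\mathbb{Z}_{\geq0}$.

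The top level $\Omega$-part contains $L(\lambda)$. In fact, the degree-zero piece of $L(\hat{\lambda})$ is precisely the irreducible $\mathfrak{g}$-module $L(\lambda)$, since it is the top of an irreducible $\widetilde{\mathfrak{g}}$-module. Each $L(0)$-eigenspace of degree $h_\lambda+n$ is then a quotient of
\[
\bigoplus U(\mathfrak{g}\otimes t^{-1}\mathbb{C}[t^{-1}])_{n}\otimes L(\lambda),
\]
which is finite-dimensional because $L(\lambda)$ is. Hence $L(\hat{\lambda})$ is ordinary.

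\emph{Necessity.} Let $M$ be an irreducible ordinary $L_{\widehat{\mathfrak{g}}}(\mathcal{k},0)$-module. Since the $L(0)$-spectrum is bounded below in real part and $M$ is irreducible, the eigenvalues lie in a single coset $h+\mathbb{Z}$. So $M$ is $\mathbb{Z}_{\geq0}$-graded, with lowest piece $M(0)=M_h$ finite-dimensional.

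By Theorem \ref{thmzhu}, $\Omega(M)\supseteq M(0)$. Moreover, $M(0)$ is an irreducible module for $A(L_{\widehat{\mathfrak{g}}}(\mathcal{k},0))$, which is a quotient of $A(V_{\widehat{\mathfrak{g}}}(\mathcal{k},0))\cong U(\mathfrak{g})$ by Lemma \ref{lemzhu}. Thus $M(0)$ is a finite-dimensional irreducible $\mathfrak{g}$-module; it is irreducible because $M\cong L(M(0))$ and the bijection in part (d). A finite-dimensional irreducible module over a basic classical Lie superalgebra has a highest weight vector for $\mathfrak{h}$, with the fixed Borel, so $M(0)\cong L(\lambda)$ for some $\lambda$.

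The highest weight vector $v$ of $M(0)$ is killed by $\mathfrak{n}_+$ and by $\mathfrak{g}\otimes t\mathbb{C}[t]$, the latter because it lowers the degree below $0$. So $v$ generates a highest weight quotient of $M(\mathcal{k},\lambda)$. By irreducibility, $M\cong L(\hat{\lambda})$, where $\hat{\lambda}$ is given by $\lambda$, level $\mathcal{k}$, and $d$ acting as $-L(0)$. Then $M$ lies in $\mathcal{O}$, and Theorem \ref{thmar16} makes $\hat{\lambda}$ principal admissible of level $\mathcal{k}$, up to the $\delta$-shift. This identifies $M$ with a member of the stated set.

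\emph{Main obstacle.} The delicate step is that irreducibility of $M(0)$ as a $\mathfrak{g}$-module forces it to be a highest weight module. This needs the $\mathfrak{h}$-weight decomposition, which I get from finite-dimensionality: $\mathfrak{h}$ commutes with $L(0)$ and acts on the finite-dimensional space $M(0)$, and I take a weight vector annihilated by $\mathfrak{n}_+$, which exists by nilpotency. Inequivalence of distinct $\hat{\lambda}$ is immediate from their highest weights.
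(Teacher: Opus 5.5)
Your proposal is correct and follows the same route as the paper. For necessity you use the lowest-degree piece, Zhu's correspondence, a highest weight vector, and then Theorem \ref{thmar16}; for sufficiency you use Theorem \ref{thmar16} plus finite-dimensionality of the graded pieces, which you spell out in more detail than the paper does. The one loose step is the irreducibility of $M(0)$: part (d) gives irreducibility of $\Omega(M)$, and you only showed $\Omega(M)\supseteq M(0)$. This is harmless, because once $v$ generates $M\cong L(\hat{\lambda})$, $M(0)$ is the top $L(\lambda)$ anyway; the paper instead quotes Dong--Li--Mason for $\Omega(M)=M(0)$ being irreducible.
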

\begin{proof}
Let $M$ be an irreducible ordinary $L_{\widehat{\mathfrak{g}}}(\mathcal{k},0)$-module, then
$M=\bigoplus_{h\in\mathbb{Z}_{\geq 0}}M_{h+\alpha}$ satisfying $\alpha\in\mathbb{C}$ and
$\mbox{dim}~M_{h+\alpha}<\infty$ for any $h\in\mathbb{Z}_{\geq 0}$.
For any $w\in M_{h+\alpha}$,
since $M$ is a $\tilde{\mathfrak{g}}$-module
and $\mbox{wt}~aw=\mbox{wt}~w$ for any $a \in\mathfrak{g}$,
we have $M_{h+\alpha}$ is a finite dimensional $\mathfrak{g}$-module.
From the Proposition 5.4 of \cite{DLM98},
we have that $\Omega(M)=M_{\alpha}$ is an irreducible $A(L_{\widehat{\mathfrak{g}}}(\mathcal{k},0))$-module,
then $M_{\alpha}$ is a finite-dimensional irreducible $\mathfrak{g}$-module.
By the Proposition 1.39 of \cite{CW12}, $M_{\alpha}$ is an irreducible highest weight $\mathfrak{g}$-module,
i.e., $M_{\alpha}=L(\lambda)$ for some $\lambda\in \mathfrak{h}^*$, thus $M=L(\hat{\lambda})$ satisfying $\mathcal{k}=\hat{\lambda}(k),\lambda=\hat{\lambda}|_{\mathfrak{h}}$.
From Theorem \ref{thmar16}, $\hat{\lambda}$ is a principal admissible weight.
Conversely, let $\hat{\lambda}\in \widehat{\mathfrak{h}}^*$ be a principal admissible weight of level $\mathcal{k}$ such that $L(\lambda)$
is a finite-dimensional $\mathfrak{g}$-module,
it shows that $L(\hat{\lambda})$ is an irreducible ordinary $L_{\widehat{\mathfrak{g}}}(\mathcal{k},0)$-module.
\end{proof}

\section{Classification of irreducible ordinary modules for affine vertex operator superalgebra}
\label{sec:4}
	\def\theequation{4.\arabic{equation}}
	\setcounter{equation}{0}
In this section, we consider the affine vertex operator superalgebra associated to
finite dimensional simple Lie algebra, basic classical Lie superalgebras of type I and type II.
If $\mathfrak{g}$ is a basic classical Lie superalgebra of type I and
$\mathcal{k}=\frac{h^{\vee}}{u}-h^{\vee}$ a boundary admissible level of $\widehat{\mathfrak{g}}$,
we prove that $L_{\widehat{\mathfrak{g}}}(\mathcal{k},0)$ has exactly $u$ inequivalent irreducible ordinary modules.
If $\mathfrak{g}$ is a finite dimensional simple Lie algebra or a basic classical Lie superalgebra of type II and
$\mathcal{k}$ a boundary admissible level of $\widehat{\mathfrak{g}}$ ($\mathcal{k}$ is principal for $\mathfrak{g}=osp(1|2n)$),
we prove that $L_{\widehat{\mathfrak{g}}}(\mathcal{k},0)$ itself is the only irreducible ordinary $L_{\widehat{\mathfrak{g}}}(\mathcal{k},0)$-module.

\subsection{Finite-dimensional simple Lie algebra}
Let $\mathfrak{g}$ be a finite dimensional simple Lie algebra, note that $h^\vee=(\rho,\theta)+1$.

\begin{lem}\label{lemy1}
Let $\mathfrak{g}$ be a finite dimensional simple Lie algebra. For any $1\ne y\in W$, there exists $\alpha\in Q_+$ satisfying $\theta+y^{-1}\alpha\in Q_+$,
and
$\begin{cases}
  (\rho,\alpha-y^{-1}\alpha)\geq h^\vee, & \text{if}~y^{-1}\theta\in -\Delta_+,\\
  (\rho,\alpha-y^{-1}\alpha)> h^\vee,  & \text{if }~y^{-1}\theta\in \Delta_+.
\end{cases}$
\end{lem}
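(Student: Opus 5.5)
The plan is to take $\alpha$ to be a positive multiple of the highest root, $\alpha=m\theta$ with $m\in\mathbb{Z}_{>0}$, and to choose $m$ according to the two cases in the statement. Then $y^{-1}\alpha=m\,y^{-1}\theta$. Since $W$ permutes $\Delta$, $y^{-1}\theta$ is a root, so it lies in $\Delta_+$ or in $-\Delta_+$. Both required conditions become statements about the single root $y^{-1}\theta$.

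\emph{Case $y^{-1}\theta\in-\Delta_+$.} Take $m=1$, i.e.\ $\alpha=\theta$.
\begin{itemize}
\item Membership: $\theta+y^{-1}\alpha=\theta-\gamma$ with $\gamma=-y^{-1}\theta\in\Delta_+$. Since $\theta$ is the highest root, $\theta-\gamma\in Q_+$.
\item Inequality: $(\rho,\alpha-y^{-1}\alpha)=(\rho,\theta)+(\rho,\gamma)$. As $h^\vee=(\rho,\theta)+1$, it suffices to show $(\rho,\gamma)\geq 1$.
\item Since $\gamma$ lies in the $W$-orbit of $\theta$, it is a long root with $(\gamma,\gamma)=2$. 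Hence $(\rho,\gamma)=\frac{(\gamma,\gamma)}{2}\langle\rho,\gamma^\vee\rangle=\langle\rho,\gamma^\vee\rangle$.
\item Because $\langle\rho,\alpha_i^\vee\rangle=1$ for every simple root, this equals the height of $\gamma^\vee$ in the simple coroots, which is a positive integer. So $(\rho,\gamma)\geq1$ and we get $(\rho,\alpha-y^{-1}\alpha)\geq h^\vee$, as required.
\end{itemize}

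\emph{Case $y^{-1}\theta\in\Delta_+$.} Here $\theta+m\,y^{-1}\theta\in Q_+$ automatically for every $m\geq1$, so only the strict inequality needs work.
\begin{itemize}
\item Since $\theta$ is the highest root, $\theta-y^{-1}\theta\in Q_+$.
\item I claim $y^{-1}\theta\neq\theta$, so that $\theta-y^{-1}\theta$ is nonzero. This follows from the fact that the stabilizer of the dominant weight $\theta$ in $W$ is generated by the simple reflections fixing it, together with $y\neq1$. Justifying this cleanly is the one delicate point, and I expect it to be the main obstacle. If $y$ happens to fix $\theta$, I would instead perturb the choice to $\alpha=m\theta+\alpha_i$ for a simple root $\alpha_i$ with $y^{-1}\alpha_i\in-\Delta_+$. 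Such an $\alpha_i$ exists because $\ell(y^{-1})>0$.
\item Granting the nonzero difference, write $\theta-y^{-1}\theta=\sum_i c_i\alpha_i$ with $c_i\in\mathbb{Z}_{\geq0}$ not all zero. Then $(\rho,\theta-y^{-1}\theta)\geq\min_i(\rho,\alpha_i)=\min_i\frac{(\alpha_i,\alpha_i)}{2}\geq\frac13$.
\item Hence $(\rho,\alpha-y^{-1}\alpha)=m(\rho,\theta-y^{-1}\theta)\geq m/3$. Choosing $m>3h^\vee$ gives the strict inequality $>h^\vee$.
\end{itemize}

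In the fallback choice $\alpha=m\theta+\alpha_i$, the membership condition must be rechecked. The term $y^{-1}\alpha_i$ is a negative root of height bounded in terms of $\mathfrak{g}$, and it is absorbed by $\theta+m\,y^{-1}\theta$ once $m$ is large. The inequality only improves, since $(\rho,\alpha_i-y^{-1}\alpha_i)>0$.
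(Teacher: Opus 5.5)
Your first case ($\alpha=\theta$ when $y^{-1}\theta\in-\Delta_+$) and your case $y^{-1}\theta\in\Delta_+\setminus\{\theta\}$ with $\alpha=m\theta$ agree with the paper and are fine. The gap is in the remaining case $y^{-1}\theta=\theta$.

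First, your claim that $y\neq 1$ forces $y^{-1}\theta\neq\theta$ is false. The stabilizer of the dominant weight $\theta$ is generated by the simple reflections $r_{\alpha_i}$ with $(\theta,\alpha_i)=0$. This group is nontrivial in every type except $A_1$ and $A_2$. For example, in $sl_4$ one has $(\theta,\alpha_2)=0$, so $y=r_{\alpha_2}\neq 1$ fixes $\theta$. So the fallback is really needed, and as written it fails.

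When $y^{-1}\theta=\theta$ and $\alpha=m\theta+\alpha_i$, the $\theta$-part cancels completely and $(\rho,\alpha-y^{-1}\alpha)=(\rho,\alpha_i-y^{-1}\alpha_i)$. This value does not depend on $m$. Your remark that ``the inequality only improves'' only compares it with $m(\rho,\theta-y^{-1}\theta)$, which is $0$ here. In the $sl_4$ example, $\alpha_2$ is the only simple root with $y^{-1}\alpha_i\in-\Delta_+$, and you get $(\rho,2\alpha_2)=2<4=h^\vee$. So the required strict inequality is not obtained.

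The fix is to put the large multiple on the simple root, not on $\theta$. Choose $N\in\mathbb{Z}_{>0}$ with $N(\rho,\alpha_i-y^{-1}\alpha_i)>h^\vee$; this is possible because $y^{-1}\alpha_i\in-\Delta_+$ makes the pairing positive. Set $\alpha=k\theta+N\alpha_i$. Then $(\rho,\alpha-y^{-1}\alpha)=N(\rho,\alpha_i-y^{-1}\alpha_i)>h^\vee$.

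For membership, $\theta+y^{-1}\alpha=(k+1)\theta+Ny^{-1}\alpha_i$. Write $\gamma=-y^{-1}\alpha_i\in\Delta_+$. Since $\theta-\gamma\in Q_+$, this lies in $Q_+$ once $k+1\geq N$. This is exactly the paper's third case: $\alpha=k\theta+m\alpha_j$ with $m>h^\vee/(\rho,\alpha_j-y^{-1}\alpha_j)$, then $k$ large. With this correction your proof coincides with the paper's.
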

\begin{proof}
Note that $y^{-1}\theta\in \Delta$.
If $y^{-1}\theta\in -\Delta_+$, let $\alpha=\theta\in Q_+$,
then we have $\theta+y^{-1}\theta\in Q_+$.
Since $\theta$ is a long root, it shows that $y^{-1}\theta$ is also a long root, thus $(\rho,-y^{-1}\theta)\geq1$, then we have
$(\rho,\theta-y^{-1}\theta)\geq(\rho,\theta)+1=h^\vee$.
If $y^{-1}\theta\in \Delta_+\setminus\{\theta\}$,
we have $(\rho,\theta-y^{-1}\theta)>0$.
We take integer $k>\frac{h^\vee}{(\rho,\theta-y^{-1}\theta)}$ and let $\alpha=k\theta\in Q_+$,
then we get $\theta+y^{-1}\alpha=\theta+ky^{-1}\theta\in Q_+$ and
$(\rho,\alpha-y^{-1}\alpha)=k(\rho,\theta-y^{-1}\theta)> h^\vee$.
If $y^{-1}\theta=\theta$, since $y\ne1$, then $(y\rho, \alpha_j)<0$ for certain $j$,
thus $(\rho-y\rho, \alpha_j)>0$, i.e., $(\rho,\alpha_j-y^{-1}\alpha_j)>0$.
We take integer $m>\frac{h^\vee}{(\rho,\alpha_j-y^{-1}\alpha_j)}$ and $\alpha=k\theta+m\alpha_j$,
then we get $(\rho,\alpha-y^{-1}\alpha)=(\rho,k\theta+m\alpha_j-ky^{-1}\theta-my^{-1}\alpha_j)=m(\rho,\alpha_j-y^{-1}\alpha_j)> h^\vee$.
Furthermore, let $k$ sufficiently large, then we have $\alpha=k\theta+m\alpha_j\in Q_+$ and
$\theta+y^{-1}\alpha=(k+1)\theta+my^{-1}\alpha_j\in Q_+$.
\end{proof}

From Lemma \ref{boundary}, all boundary admissible levels of $\widehat{\mathfrak{g}}$ are principal, then we have the following theorem.

\begin{thm}
Let $\mathfrak{g}$ be a finite dimensional simple Lie algebra and
$\mathcal{k}=\frac{h^{\vee}}{u}-h^{\vee}$ a boundary admissible level of $\widehat{\mathfrak{g}}$.
Then $L_{\widehat{\mathfrak{g}}}(\mathcal{k},0)$ itself is the only irreducible ordinary $L_{\widehat{\mathfrak{g}}}(\mathcal{k},0)$-module.
\end{thm}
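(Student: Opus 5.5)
By Lemma \ref{lemor}, the irreducible ordinary modules are exactly the $L(\hat\lambda)$ with $\hat\lambda$ principal admissible of level $\mathcal{k}$ and $L(\lambda)$ finite-dimensional. By Lemma \ref{lemboundary}, such a $\hat\lambda$ has the form $\hat\lambda=(t_\beta y).(\mathcal{k}\Lambda_0)$ (modulo $\delta$), where $y\in W$ and $(t_\beta y)\widehat{\Pi}_u\subset\widehat{\Delta}_+$. The plan is to show that finite-dimensionality forces $y=1$ and $\beta=0$. Then $\hat\lambda=\mathcal{k}\Lambda_0$, so the module is $L_{\widehat{\mathfrak{g}}}(\mathcal{k},0)$ itself. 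Conversely, $\mathcal{k}\Lambda_0$ is admissible and $L(0)$ is finite-dimensional, so this module does occur.

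\textbf{Computing the finite part.} Since $\mathcal{k}+h^\vee=h^\vee/u$ and $\widehat\rho=h^\vee\Lambda_0+\rho$, the formula for $t_\beta$ gives
$$\lambda=\hat\lambda|_{\mathfrak{h}}=y\rho-\rho+\frac{h^\vee}{u}\beta .$$
Put $\gamma=-u\beta$. The conditions after Lemma \ref{lemboundary} say that $(\gamma,y\alpha_i)\in\mathbb{Z}_{\ge0}$ (strictly positive when $y\alpha_i<0$) and $(\gamma,y\theta)\le u$ (strictly less when $y\theta>0$). For a simple Lie algebra, $L(\lambda)$ is finite-dimensional exactly when $\langle\lambda,\alpha_i^\vee\rangle\in\mathbb{Z}_{\ge0}$ for all $i$, i.e. $\lambda+\rho$ is dominant regular and integral. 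We have
$$\lambda+\rho=y\rho-\frac{h^\vee}{u^2}\gamma .$$
Since $y\rho$ is integral and the pairings $(\gamma,\cdot)$ are integers, integrality forces $\frac{h^\vee}{u^2}(\gamma,\alpha_i^\vee)\in\mathbb{Z}$. Using $(u,h^\vee)=1$, this gives $u^2\mid(\gamma,\alpha_i^\vee)$ for every simple root (up to $r^\vee$ factors, which $(u,r^\vee)=1$ removes). Hence $\gamma=u^2\mu$ with $\mu$ in the weight lattice (coweight lattice), and $\lambda+\rho=y\rho-h^\vee\mu$.

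\textbf{Using Lemma \ref{lemy1} to force $y=1$.} The constraints give $(\mu,y\alpha_i)\ge0$ for all $i$, so $y^{-1}\mu$ is dominant. They also give $u^2(\mu,y\theta)\le u$, hence $(\mu,y\theta)\le 0$ (or $<0$ when $y\theta>0$), since $(\mu,y\theta)$ is an integer. Dominance of $\lambda+\rho$ means $(\lambda+\rho,\alpha)>0$ for all $\alpha\in Q_+\setminus\{0\}$. Suppose $y\ne1$, and take the $\alpha\in Q_+$ supplied by Lemma \ref{lemy1} (applied to $y$, with the roles of $y$ and $y^{-1}$ arranged to match). We then pair $\lambda+\rho$ with a suitable combination built from $\alpha$ and $\theta$. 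The $\rho$-part is controlled by $(\rho,\alpha-y^{-1}\alpha)\ge h^\vee$ (strict in the appropriate case). The $\mu$-part is controlled by $(y^{-1}\mu,\theta+y^{-1}\alpha)\ge0$, since $y^{-1}\mu$ is dominant and $\theta+y^{-1}\alpha\in Q_+$, together with the bound on $(\mu,y\theta)$. The goal is to combine these into $(\lambda+\rho,\alpha')\le 0$ for some $\alpha'\in Q_+\setminus\{0\}$, contradicting dominance. The equality/strict dichotomy in Lemma \ref{lemy1} is designed to match the dichotomy in the $\theta$-condition.

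\textbf{Forcing $\beta=0$, and the main obstacle.} Once $y=1$, the conditions read $\mu$ dominant and $(\mu,\theta)\le 0$. Since $\theta$ has positive coefficients on all fundamental coweights, this forces $\mu=0$, hence $\beta=0$ and $\hat\lambda=\mathcal{k}\Lambda_0$. I expect the main obstacle to be the previous step: arranging the signs and the $y$ versus $y^{-1}$ bookkeeping so that Lemma \ref{lemy1} yields exactly the contradiction, and checking that the normalization $h^\vee=(\rho,\theta)+1$ makes the inequalities tight. I would first write $(\lambda+\rho,y\alpha)$ explicitly and see which pairing with $Q_+$ makes Lemma \ref{lemy1} apply verbatim.
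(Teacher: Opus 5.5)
Your proposal has a genuine gap. It sits in the step you defer, and an earlier slip has aimed that step at a false target. With $\gamma=-u\beta$, the condition $u+(\beta,y\theta)\ge0$ becomes $(\gamma,y\theta)\le u^2$, not $(\gamma,y\theta)\le u$. Your $\theta$-constraint is the one satisfied by $-\beta$, while your formula $\lambda+\rho=y\rho-\frac{h^\vee}{u^2}\gamma$ uses $\gamma=-u\beta$. After your (correct) integrality step $\gamma=u^2\mu$, the right bound is $(\mu,y\theta)\le 1$, sharpened to $(\mu,y\theta)\le 0$ only when $y\theta\in\Delta_+$.

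With the correct bound, your intermediate claim that $y=1$ and $\beta=0$ are forced is false. Take $\mathfrak{g}=sl_2$, $y=r_{\alpha_1}$, $\beta=\frac{u}{2}\alpha_1$. Then $(t_\beta y)(u\delta-\alpha_1)=\alpha_1$ and $(t_\beta y)\alpha_1=u\delta-\alpha_1$, so all conditions of Lemma \ref{lemboundary} hold, and $\lambda=y\rho-\rho+\frac{h^\vee}{u}\beta=0$. Here $(\mu,y\theta)=1$, so your bound would wrongly exclude this legitimate pair. No argument can derive a contradiction from $y\neq1$ alone; in the case $y^{-1}\theta\in-\Delta_+$ the correct target is $\lambda=0$, not $y=1$.

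The missing argument is the paper's, and it is short. Take $\alpha\in Q_+$ from Lemma \ref{lemy1}. Since $y^{-1}\mu$ is dominant and $\theta+y^{-1}\alpha\in Q_+$,
\[(\mu,\alpha)=(y^{-1}\mu,\theta+y^{-1}\alpha)-(\mu,y\theta)\ge -1,\]
which is exactly $u-(\beta,\alpha)\ge0$. Now pair $\lambda$ itself with $\alpha$. Pairing $\lambda+\rho$ is the wrong target: it only yields $(\lambda+\rho,\alpha)\le(\rho,\alpha)$, which is no contradiction. For $\lambda$ one gets
\[0\le(\lambda,\alpha)=\frac{h^\vee}{u}(\beta,\alpha)-(\rho,\alpha-y^{-1}\alpha)\le h^\vee-(\rho,\alpha-y^{-1}\alpha).\]
If $y\neq1$ and $y^{-1}\theta\in\Delta_+$, the strict inequality in Lemma \ref{lemy1} makes the right side negative, a contradiction. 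If $y^{-1}\theta\in-\Delta_+$, take $\alpha=\theta$. Then the right side is $\le0$, so $(\lambda,\theta)=0$, hence $\lambda=0$ by dominance, and $\hat\lambda=\mathcal{k}\Lambda_0$ modulo $\delta$ even though $y\neq1$.

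Your $y=1$ step is fine: there $y\theta\in\Delta_+$, so the corrected bound still gives $(\mu,\theta)\le0$. Your integrality reduction to $\mu$ is correct but unnecessary; the paper works directly with $\beta$ and the single inequality $u-(\beta,\alpha)\ge0$.
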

\begin{proof}
Let $L(\hat{\lambda})$ be an irreducible ordinary $L_{\widehat{\mathfrak{g}}}(\mathcal{k},0)$-module,
from Lemma \ref{lemor}, $\hat{\lambda}\in \widehat{\mathfrak{h}}^*$ is a principal admissible weight of level $\mathcal{k}$ such that $L(\lambda)$
is a finite-dimensional $\mathfrak{g}$-module, where $\lambda=\hat{\lambda}|_{\mathfrak{h}}$.
From Lemma \ref{lemboundary}, we obtain $\hat{\lambda}=(t_\beta y).(\mathcal{k}\Lambda_0)$,
where $\beta\in \mathfrak{h}^*, y\in W$
satisfying if $y\theta\in -\Delta_+$, $u+(\beta,y\theta)\in\mathbb{Z}_{\geq0}$,
if $y\theta\in \Delta_+$, $u+(\beta,y\theta)\in\mathbb{Z}_{>0}$,
if $y\alpha_i\in \Delta_+$, $-(\beta,y\alpha_i)\in\mathbb{Z}_{\geq0}$, and
if $y\alpha_i\in -\Delta_+$, $-(\beta,y\alpha_i)\in\mathbb{Z}_{>0}$,
where $i=1,\cdots,n$.
Then from Theorem 21.1 of \cite{Hum78}, we obtain
$(\hat{\lambda}, \alpha_i^\vee)\in \mathbb{Z}_{\geq0}$, i.e., $\frac{h^\vee}{u}(\beta,\alpha_i)+(\rho,y^{-1}\alpha_i-\alpha_i)\in \frac{(\alpha_i,\alpha_i)}{2}\mathbb{Z}_{\geq0}$, where $i=1,\cdots,n$.

If $y^{-1}\theta\in -\Delta_+$, by Lemma \ref{lemy1},
we have $u-(\beta,\theta)= u+(\beta,y\theta)-(\beta,y(\theta+y^{-1}\theta))\in\mathbb{Z}_{\geq0}$
and $\frac{h^\vee}{u}(\beta,\theta)+(\rho,y^{-1}\theta-\theta)\geq0$,
then $\frac{h^\vee}{u}(\beta,\theta)+(\rho,y^{-1}\theta-\theta)\leq \frac{h^\vee}{u}(\beta,\theta)-h^\vee
=\frac{h^\vee}{u}((\beta,\theta)-u)\leq 0$,
thus $\frac{h^\vee}{u}(\beta,\theta)+(\rho,y^{-1}\theta-\theta)=0$,
i.e., $(\hat{\lambda}, \theta)=0$. This shows that if $\beta$ exists,
we have $(\hat{\lambda}, \alpha_i)=0$ for any $i=1,\cdots,n$, then $\hat{\lambda}|_{\mathfrak{h}}=0$, up to adding a multiple of $\delta$, $\hat{\lambda}=\mathcal{k}\Lambda_0$.
If $y\ne 1$ and $y^{-1}\theta\in \Delta_+$, by Lemma \ref{lemy1}, there exists $\alpha\in Q_+$ satisfying $\theta+y^{-1}\alpha\in Q_+$ and $(\rho,\alpha-y^{-1}\alpha)> h^\vee$,
then we have $u-(\beta,\alpha)= u+(\beta,y\theta)-(\beta,y(\theta+y^{-1}\alpha))\in\mathbb{Z}_{\geq0}$ and $\frac{h^\vee}{u}(\beta,\alpha)+(\rho,y^{-1}\alpha-\alpha)\geq0$, thus $\frac{h^\vee}{u}(\beta,\alpha)+(\rho,y^{-1}\alpha-\alpha)< \frac{h^\vee}{u}(\beta,\alpha)-h^\vee
=\frac{h^\vee}{u}((\beta,\alpha)-u)\leq 0$,
this is impossible.
If $y=1$, then we have $(\beta,\alpha_i)=0$ for any $i=1,\cdots,n$, thus $\beta=0$, i.e., $\hat{\lambda}=\mathcal{k}\Lambda_0$.
This implies that $L_{\widehat{\mathfrak{g}}}(\mathcal{k},0)$ itself is the only irreducible ordinary $L_{\widehat{\mathfrak{g}}}(\mathcal{k},0)$-module.
\end{proof}

\subsection{Basic classical Lie superalgebra of type I}
Basic classical Lie superalgebras of type I consist of type $A(n,m)$ and type $C(n+1)$.
If $\mathfrak{g}$ is of type $A(n,n)$, then $h^\vee=0$, from Lemma \ref{boundary},
$\widehat{\mathfrak{g}}$ does not admit boundary admissible levels,
thus we consider the type $A(n,m)$ with $n\ne m$.

\subsubsection{Type $A(n-1,m-1)$}
Let $\mathfrak{g}$ be the Lie superalgebra of type $A(n-1,m-1)$,
that is, $\mathfrak{g}=sl(n|m)$, where $m,n>0$ and $n\ne m$.
Note that when $n\ne m$, $sl(n|m)\cong sl(m|n)$,
we assume $n>m>0$ in this paper.
We fix the Dynkin diagram
$$
\begin{tikzpicture}[
    baseline=-0.5ex,
    line width=0.4pt,
    root/.style={circle, draw, minimum size=10pt, inner sep=0pt},
    otimes/.style={circle, draw, minimum size=10pt, inner sep=0pt,
        path picture={
            \draw (-3.1pt,-3.1pt) -- (3.1pt, 3.1pt);
            \draw (-3.1pt, 3.1pt) -- (3.1pt,-3.1pt);
        }
    }
]
    \node[root, label=below:{$\scriptstyle \alpha_1$}]      (n1) at (0, 0) {};
    \node (dots1) at (0.8, 0) {$\cdots$};
    \node[root, label=below:{$\scriptstyle \alpha_{n-1}$}]  (n2) at (1.6, 0) {};
    \node[otimes, label=below:{$\scriptstyle \alpha_n$}]    (n3) at (2.4, 0) {};
    \node[root, label=below:{$\scriptstyle \alpha_{n+1}$}]  (n4) at (3.2, 0) {};
    \node (dots2) at (4.0, 0) {$\cdots$};
    \node[root, label=below:{$\scriptstyle \alpha_{n+m-1}$}] (n5) at (4.8, 0) {};
    \draw (n1) -- (dots1) -- (n2) -- (n3) -- (n4) -- (dots2) -- (n5);
\end{tikzpicture}
$$
and $\Pi=\{\alpha_1,\cdots,\alpha_{n+m-1}\}$,
where $\alpha_1,\cdots,\alpha_{n-1},\alpha_{n+1},\cdots,\alpha_{n+m-1}$ are even and $\alpha_n$ is odd.
Then the corresponding Cartan matrix is
$$A=\begin{pmatrix}
2 & -1  & \cdots & 0 & 0 & 0 & \cdots & 0 & 0\\
-1 & 2  & \cdots & 0 & 0 & 0 & \cdots & 0 & 0\\
\vdots & \vdots & \ddots & \vdots & \vdots & \vdots & & \vdots & \vdots\\
0 & 0 & \cdots & 2 & -1 & 0 & \cdots & 0 & 0\\
0 & 0  & \cdots & -1 & 0 & 1 & \cdots & 0 & 0\\
0 & 0 & \cdots & 0 & -1 & 2 & \cdots & 0 & 0\\
\vdots & \vdots  & & \vdots & \vdots &\vdots & \ddots&\vdots & \vdots \\
0 & 0  & \cdots & 0 & 0 & 0 & \cdots&2 & -1\\
0 & 0  & \cdots & 0 & 0 & 0 & \cdots&-1 & 2
\end{pmatrix}
.$$
Note that $h^{\vee}=n-m$, $r^\vee=1$, $\theta=\alpha_1+\cdots+\alpha_{n+m-1}$,
$(\rho, \alpha_i)=1$ for $i=1,\cdots,n-1$, $(\rho, \alpha_n)=0$ and $(\rho, \alpha_i)=-1$ for $i=n+1,\cdots,n+m-1$.
We have $W=W_1\times W_2$, where $W_1=\langle r_{\alpha_i}|i=1,\cdots,n-1\rangle$ and $W_2=\langle r_{\alpha_i}|i=n+1,\cdots,n+m-1\rangle$.
Let $\theta_1=\alpha_1+\cdots+\alpha_{n-1}$.
We can identify $W_1$ with the Weyl group of $sl_n$ and identify $\theta_1$ with the highest root of $sl_n$.
Similarly, $\Delta^0=\Delta_1\cup\Delta_2$, where $\Delta_1$ is the root system of $sl_n$ and $\Delta_2$ is the root system of $sl_m$.
Then we have the following lemma.
\begin{lem}\label{lemy3}
Let $\mathfrak{g}=sl(n|m)$, where $n>m>0$.
If $y\notin W_2$,
there exists $\alpha\in Q_{1,+}$ satisfying $\theta+y^{-1}\alpha\in Q_+$ and $(\rho,\alpha-y^{-1}\alpha)\geq n-m$,
where $Q_{1,+}=\bigoplus_{i=1}^{n-1}\mathbb{Z}_{\geq0}\alpha_i$.
\end{lem}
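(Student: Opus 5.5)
The plan is to reduce the statement to Lemma \ref{lemy1} applied to the even part $sl_n$, using the decomposition $W=W_1\times W_2$. Write $y=y_1y_2$ with $y_1\in W_1$ and $y_2\in W_2$. The hypothesis $y\notin W_2$ means exactly that $y_1\neq 1$.

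First I check that $W_2$ acts trivially on $Q_1=\bigoplus_{i=1}^{n-1}\mathbb{Z}\alpha_i$. For $j\geq n+1$ and $i\leq n-1$ the Cartan matrix gives $(\alpha_j,\alpha_i)=0$, so each generator $r_{\alpha_j}$ of $W_2$ fixes every $\alpha_i$ with $i\le n-1$. Since $W_1$ and $W_2$ commute, for every $\alpha\in Q_1$ we get
\[
y^{-1}\alpha=y_2^{-1}y_1^{-1}\alpha=y_1^{-1}\alpha\in Q_1 .
\]

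Next I identify the relevant data on $Q_1$ with those of $sl_n$. The restriction of $(\cdot,\cdot)$ to $Q_1$ is the standard normalized form of $sl_n$, since the upper-left block of $A$ is the $A_{n-1}$ Cartan matrix and $(\alpha_i,\alpha_i)=2$. Because $(\rho,\alpha_i)=1$ for $i=1,\dots,n-1$, the functional $(\rho,\cdot)$ on $Q_1$ coincides with $(\rho_1,\cdot)$, where $\rho_1$ is the Weyl vector of $sl_n$. The highest root of $sl_n$ is $\theta_1$, and the dual Coxeter number of $sl_n$ is $n$.

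Applying Lemma \ref{lemy1} to $sl_n$ and $1\neq y_1\in W_1$, I get $\alpha\in Q_{1,+}$ with $\theta_1+y_1^{-1}\alpha\in Q_{1,+}$. In either of the two cases of that lemma, it also gives
\[
(\rho_1,\alpha-y_1^{-1}\alpha)\geq n .
\]

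Then I transfer back to $\mathfrak{g}$. For the first condition,
\[
\theta+y^{-1}\alpha=(\alpha_n+\cdots+\alpha_{n+m-1})+(\theta_1+y_1^{-1}\alpha)\in Q_+ .
\]
For the second condition, $\alpha-y^{-1}\alpha=\alpha-y_1^{-1}\alpha\in Q_1$, so
\[
(\rho,\alpha-y^{-1}\alpha)=(\rho_1,\alpha-y_1^{-1}\alpha)\geq n\geq n-m .
\]

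The only delicate points are the commutation and invariance facts: that $W_2$ fixes $Q_1$ pointwise, and that the sign of $y_1^{-1}\theta_1$ plays no role because Lemma \ref{lemy1} yields at least $h^\vee_{sl_n}=n$ in all cases. Both are routine checks against the Cartan matrix, so I expect no real obstacle. The inequality even holds with room to spare, $n$ instead of $n-m$.
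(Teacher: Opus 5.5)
Your proof is correct and follows the paper's route. The paper also writes $y=y_1y_2$ with $1\neq y_1\in W_1$, works with $\theta_1$ inside the $sl_n$ part, and adds $\alpha_n+\cdots+\alpha_{n+m-1}$ to pass from $\theta_1$ to $\theta$; the only difference is that it re-runs the three-case argument of Lemma \ref{lemy1} explicitly (with threshold $n-m$), whereas you cite Lemma \ref{lemy1} for $sl_n$ directly, after checking the $W_2$-invariance of $Q_1$ and $(\rho,\cdot)|_{Q_1}=(\rho_1,\cdot)$, which the paper uses without comment.
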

\begin{proof}
Let $y=y_1y_2$, where $1\ne y_1\in W_1$ and $y_2\in W_2$, then $y^{-1}\theta_1=y_1^{-1}\theta_1\in \Delta_1$.
If $y_1^{-1}\theta_1\in -\Delta_{1,+}$,
where $\Delta_{1,+}$ is the positive root system of $sl_n$,
let $\alpha=\theta_1\in Q_{1,+}$,
then we have $\theta+y^{-1}\theta_1=\alpha_n+\cdots+\alpha_{n+m-1}+\theta_1+y_1^{-1}\theta_1\in Q_+$ and
$(\rho,\theta_1-y^{-1}\theta_1)>(\rho,\theta_1)=n-1\geq n-m$.
If $y_1^{-1}\theta_1\in \Delta_{1,+}\setminus\{\theta_1\}$,
then $(\rho,\theta_1-y_1^{-1}\theta_1)>0$,
we take integer $k\geq\frac{n-m}{(\rho,\theta_1-y_1^{-1}\theta_1)}$ and let $\alpha=k\theta_1\in Q_{1,+}$,
then we get $\theta+y^{-1}\alpha=\theta+ky_1^{-1}\theta_1\in Q_+$ and
$(\rho,\alpha-y^{-1}\alpha)=k(\rho,\theta_1-y_1^{-1}\theta_1)\geq n-m$.
If $y_1^{-1}\theta_1=\theta_1$, since $y_1\ne1$, then $(y_1\rho, \alpha_j)<0$ for certain $j\in\{1,\cdots,n-1\}$,
thus $(\rho-y_1\rho, \alpha_j)>0$, i.e., $(\rho,\alpha_j-y_1^{-1}\alpha_j)>0$.
We take integer $l\geq\frac{n-m}{(\rho,\alpha_j-y_1^{-1}\alpha_j)}$ and $\alpha=k\theta_1+l\alpha_j$,
then we get $(\rho,\alpha-y^{-1}\alpha)=(\rho,k\theta_1+l\alpha_j-ky_1^{-1}\theta_1-ly_1^{-1}\alpha_j)=l(\rho,\alpha_j-y_1^{-1}\alpha_j)\geq n-m$.
Furthermore, let $k$ sufficiently large, then we have $\alpha=k\theta_1+l\alpha_j\in Q_{1,+}$ and
$\theta+y^{-1}\alpha=\alpha_n+\cdots+\alpha_{n+m-1}+(k+1)\theta_1+ly_1^{-1}\alpha_j\in Q_+$.
\end{proof}

For the extended affine Lie superalgebra $\widehat{\mathfrak{g}}$,
the corresponding Cartan matrix is
$$\widehat{A}=\begin{pmatrix}
0&-1 & 0  & \cdots & 0 & 0 & 0 & \cdots & 0 & 1\\
-1&2 & -1  & \cdots & 0 & 0 & 0 & \cdots & 0 & 0\\
0&-1 & 2  & \cdots & 0 & 0 & 0 & \cdots & 0 & 0\\
\vdots &\vdots & \vdots & \ddots & \vdots & \vdots & \vdots & & \vdots & \vdots\\
0&0 & 0 & \cdots & 2 & -1 & 0 & \cdots & 0 & 0\\
0&0 & 0  & \cdots & -1 & 0 & 1 & \cdots & 0 & 0\\
0&0 & 0 & \cdots & 0 & -1 & 2 & \cdots & 0 & 0\\
\vdots & \vdots & \vdots & &\vdots & \vdots & \vdots& \ddots& \vdots & \vdots \\
0&0 & 0  & \cdots & 0 & 0 & 0 & \cdots&2 & -1\\
-1&0 & 0  & \cdots & 0 & 0 & 0 & \cdots&-1 & 2
\end{pmatrix}
.$$
From Lemma \ref{boundary}, all boundary admissible levels of $\widehat{\mathfrak{g}}$ are principal,
thus the main result for type $A(n-1,m-1)$ is the following theorem.

\begin{thm}
Let $\mathfrak{g}=sl(n|m)~(n>m>0)$ and
$\mathcal{k}=\frac{n-m}{u}-(n-m)$ be a boundary admissible level of $\widehat{\mathfrak{g}}$.
Then the set $\{L(\hat{\lambda})~|~\hat{\lambda}=(\frac{p+1}{u}-1)(n-m)\Lambda_0-\frac{p}{u}(n-m)\Lambda_n, p=0,\cdots,u-1\}$
provides the complete list of irreducible ordinary $L_{\widehat{\mathfrak{g}}}(\mathcal{k},0)$-modules.
\end{thm}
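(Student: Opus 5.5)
By Lemma \ref{lemor}, the task reduces to determining which principal admissible weights $\hat{\lambda}$ of level $\mathcal{k}$ have $L(\lambda)$ finite-dimensional, where $\lambda=\hat{\lambda}|_{\mathfrak{h}}$. By Lemma \ref{lemboundary}, any such weight is $\hat{\lambda}=(t_\beta y).(\mathcal{k}\Lambda_0)$, with $\beta\in\mathfrak{h}^*$ and $y=y_1y_2\in W=W_1\times W_2$ subject to the integrality and positivity conditions listed after that lemma. Since $\mathfrak{g}$ is of type I, $L(\lambda)$ is finite-dimensional exactly when $(\lambda,\alpha_i^\vee)\in\mathbb{Z}_{\geq0}$ for the even simple roots $\alpha_i$ with $i\neq n$; there is no condition at the odd simple root $\alpha_n$ (Kac's criterion). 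Computing $\lambda=\frac{h^\vee}{u}y^{-1}\beta+y^{-1}\rho-\rho$ (as in the Lie algebra case, after transporting by $y$), these conditions become
\[
\frac{n-m}{u}(\beta,y\alpha_i)+(\rho,\alpha_i-y\alpha_i)\ \text{(suitably signed)}\in\mathbb{Z}_{\geq0},\qquad i\neq n .
\]

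The first step is to show that $y\in W_2$, arguing exactly as in the finite-dimensional simple Lie algebra theorem. Suppose $y\notin W_2$. Lemma \ref{lemy3} gives $\alpha\in Q_{1,+}$ with $\theta+y^{-1}\alpha\in Q_+$ and $(\rho,\alpha-y^{-1}\alpha)\geq n-m$. The conditions on $(t_\beta y)\widehat{\Pi}_u$ then give $u-(\beta,\alpha)\in\mathbb{Z}_{\geq0}$, because $\theta+y^{-1}\alpha$ is a nonnegative combination of simple roots and each $-(\beta,y\alpha_i)\geq0$. On the other hand, since $\alpha$ is a nonnegative combination of the even simple roots $\alpha_1,\dots,\alpha_{n-1}$, finite-dimensionality gives $\frac{n-m}{u}(\beta,\alpha)+(\rho,y^{-1}\alpha-\alpha)\geq0$. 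Combining these yields
\[
0\leq\frac{n-m}{u}\bigl((\beta,\alpha)-u\bigr)\leq0 ,
\]
so equality holds throughout. I expect the main obstacle to be ruling out this boundary case, since Lemma \ref{lemy3} only gives $\geq$ rather than $>$. I would handle it by sharpening the construction: in the subcases where $k$ or $l$ can be chosen freely, take them strictly larger, which gives strict inequality. In the subcase $y_1^{-1}\theta_1\in-\Delta_{1,+}$ the inequality $(\rho,\theta_1-y^{-1}\theta_1)>n-1\geq n-m$ is already strict. So in every case equality is impossible, and $y\in W_2$.

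With $y=y_2\in W_2$, note that $W_2$ fixes $\alpha_1,\dots,\alpha_{n-2}$ and $\theta_1$, and the even roots of $sl_m$ are compatible with $\rho$ in the sense that $(\rho,\cdot)$ is negated on $\Delta_2$. I would next run the analogous argument with the roles swapped. Here the finite-dimensionality conditions on $\alpha_{n+1},\dots,\alpha_{n+m-1}$, together with $-(\beta,y\alpha_i)\geq0$ (strictly when $y\alpha_i<0$), force $y_2=1$. If $y_2\neq1$, some $y\alpha_j<0$ for $j>n$; then $(\beta,\cdot)$ is strictly positive on that root, while the finite-dimensionality sign condition forces $(\beta,\alpha_j)$ to be of the opposite sign after adding the $\rho$-shift, which is bounded by $m-1<n-m+\dots$. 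This step requires care, and I would check it on $sl(3|2)$ first. Once $y=1$, the conditions become $(\beta,\alpha_i)=0$ for all $i\neq n$, so $\beta=c\,\varpi$, where $\varpi$ is the weight orthogonal to all even simple roots. The constraints are then $p:=-(\beta,\alpha_n)\in\mathbb{Z}_{\geq0}$ and $u+(\beta,\theta)=u-p\in\mathbb{Z}_{>0}$, so $p=0,\dots,u-1$.

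Finally, I would compute $\hat{\lambda}=t_\beta.(\mathcal{k}\Lambda_0)$. Its finite part is $\frac{n-m}{u}\beta=-\frac{p}{u}(n-m)\Lambda_n$, because $(\Lambda_n,\alpha_n^\vee)=1$ and $\alpha_n^\vee=\alpha_n$. Its level is $\mathcal{k}$, and the $\Lambda_0$-coefficient is obtained by subtracting $(\lambda,\theta)$-contributions, which gives $(\frac{p+1}{u}-1)(n-m)$ modulo $\delta$. The $u$ weights are pairwise distinct, so the resulting modules are inequivalent. Conversely, each of these weights is principal admissible by Lemma \ref{lemboundary}, and its finite part is dominant integral on the even simple roots, so $L(\lambda)$ is finite-dimensional. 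Lemma \ref{lemor} then shows that each $L(\hat{\lambda})$ is an ordinary module, which completes the classification.
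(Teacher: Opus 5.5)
Your reduction via Lemma~\ref{lemor} and Lemma~\ref{lemboundary}, your treatment of $y\notin W_2$, and the final computation of $\hat\lambda$ are fine. There is, however, a genuine gap in the step ruling out $1\neq y\in W_2$, and as sketched it does not work. You pick $j>n$ with $y\alpha_j<0$, which controls $(\beta,-y\alpha_j)$. You then invoke the finite-dimensionality condition at $\alpha_j$, which involves $(\beta,\alpha_j)$, a different quantity. You propose to close this with a size comparison of the form $m-1<n-m+\cdots$, which has no reason to hold (take $n=m+1$ with $m$ large).

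No size comparison is needed; the missing idea is a sign argument. Choose $i\in\{n+1,\dots,n+m-1\}$ with $y^{-1}\alpha_i\in-\Delta_{2,+}$, i.e.\ a descent of $y^{-1}$ rather than of $y$. Write $-y^{-1}\alpha_i=\sum_{j>n}c_j\alpha_j$ with $c_j\geq 0$. Then $(\beta,\alpha_i)=\sum_j c_j\bigl(-(\beta,y\alpha_j)\bigr)\geq 0$. Finite-dimensionality at $\alpha_i$ demands $-\frac{n-m}{u}(\beta,\alpha_i)-(\rho,y^{-1}\alpha_i-\alpha_i)\in\mathbb{Z}_{\geq0}$. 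The first term is $\leq 0$. The second is strictly negative, because $(\rho,\alpha_j)=-1$ for $j>n$ and $y^{-1}\alpha_i-\alpha_i\in-Q_{2,+}\setminus\{0\}$. This is a contradiction. The underlying point is that on $\Delta_2$ the $\rho$-shift has the same sign as the $\beta$-term. Equivalently, you can keep your $j$ but apply dominance at the positive even root $-y\alpha_j\in\Delta_{2,+}$ instead of at $\alpha_j$.

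There are also some smaller issues. In the case $y=1$, your claim that $(\beta,\alpha_i)=0$ for all $i\neq n$ is immediate only for $i<n$. For $i>n$, admissibility and finite-dimensionality both give $-(\beta,\alpha_i)\geq 0$, so there is no squeeze. You need $(u,n-m)=1$ to get $-(\beta,\alpha_i)\in u\mathbb{Z}_{\geq0}$. Then the bound $-(\beta,\theta)<u$, whose summands are all nonnegative, forces these values to vanish. This is where the boundary coprimality enters, so it must be stated.

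The ``boundary case'' you flag for $y\notin W_2$ is not actually an obstacle. The root $\theta=\alpha_1+\cdots+\alpha_{n+m-1}$ is odd and $y\theta\in\Delta_+$ for every $y\in W$, so $u+(\beta,y\theta)\in\mathbb{Z}_{>0}$ already gives $u-(\beta,\alpha)>0$; this is how the paper gets strictness. Your strengthened version of Lemma~\ref{lemy3} also works, but it is unnecessary.

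Finally, your displayed formula $\lambda=\frac{h^\vee}{u}y^{-1}\beta+y^{-1}\rho-\rho$ is incorrect. From $\hat\lambda=(t_\beta y).(\mathcal{k}\Lambda_0)$ one gets $\lambda=\frac{h^\vee}{u}\beta+y\rho-\rho$, which is the form you actually use afterwards.
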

\begin{proof}
Let $L(\hat{\lambda})$ be an irreducible ordinary $L_{\widehat{\mathfrak{g}}}(\mathcal{k},0)$-module,
by Lemma \ref{lemor}, we know that $\hat{\lambda}\in \widehat{\mathfrak{h}}^*$ is a principal admissible weight of level $\mathcal{k}$ such that $L(\lambda)$
is a finite-dimensional $\mathfrak{g}$-module, where $\lambda=\hat{\lambda}|_{\mathfrak{h}}$.
Since $y\theta\in \Delta_+$ for any $y\in W$, from Lemma \ref{lemboundary}, we obtain $\hat{\lambda}=(t_\beta y).(\mathcal{k}\Lambda_0)$,
where $\beta\in \mathfrak{h}^*, y\in W$ satisfying
$u+(\beta,y\theta)\in\mathbb{Z}_{>0}$,
and for any $i=1,\cdots,n+m-1$,
if $y\alpha_i\in \Delta_+$, $-(\beta,y\alpha_i)\in\mathbb{Z}_{\geq0}$,
if $y\alpha_i\in -\Delta_+$, $-(\beta,y\alpha_i)\in\mathbb{Z}_{>0}$.
Then from Theorem 8 of \cite{Kac77},
we have $(\hat{\lambda}, \alpha_i)\in \mathbb{Z}_{\geq0} ~(i=1,\cdots,n-1)$, $-(\hat{\lambda}, \alpha_i)\in \mathbb{Z}_{\geq0} ~(i=n+1,\cdots,n+m-1)$,
i.e.,
$$\begin{cases}
 \frac{n-m}{u}(\beta,\alpha_i)+(\rho,y^{-1}\alpha_i-\alpha_i)\in \mathbb{Z}_{\geq0}, & i=1,\cdots,n-1,\\
 -\frac{n-m}{u}(\beta,\alpha_i)-(\rho,y^{-1}\alpha_i-\alpha_i)\in \mathbb{Z}_{\geq0},  &i=n+1,\cdots,n+m-1.
\end{cases}$$

If $1\ne y\in W_2$, from Corollary 10.2 of \cite{Hum78}, there exist $\alpha_i ~(i\in\{n+1,\cdots,n+m-1\})$ satisfying $y^{-1}\alpha_i\in -\Delta_{2,+}$,
thus we have $(\beta,\alpha_i)=-(\beta,y(-y^{-1}\alpha_i))\in\mathbb{Z}_{\geq0}$ and $-\frac{n-m}{u}(\beta,\alpha_i)-(\rho,y^{-1}\alpha_i-\alpha_i)\in \mathbb{Z}_{\geq0}$.
Since $y^{-1}\alpha_i-\alpha_i\in-Q_{2,+}$, we have $(\rho,y^{-1}\alpha_i-\alpha_i)>0$,
this is impossible.
If $y\notin W_2$, by Lemma \ref{lemy3}, there exists $\alpha\in Q_{1,+}$ satisfying $\theta+y^{-1}\alpha\in Q_+$ and $(\rho,\alpha-y^{-1}\alpha)\geq n-m$,
then we have $u-(\beta,\alpha)=u+(\beta,y\theta)-(\beta,y(\theta+y^{-1}\alpha))\in\mathbb{Z}_{>0}$ and $\frac{n-m}{u}(\beta,\alpha)+(\rho,y^{-1}\alpha-\alpha)\in \mathbb{Z}_{\geq0}$, thus $(\beta,\alpha)\geq\frac{u}{n-m}(\rho,\alpha-y^{-1}\alpha)\geq u$,
this is also impossible.
If $y=1$, then we have
\begin{equation}\label{222}
\begin{cases}
  u+(\beta,\theta)\in\mathbb{Z}_{>0},\\
  (\beta,\alpha_i)=0, &i=1,\cdots,n-1,\\
  -(\beta,\alpha_n)\in\mathbb{Z}_{\geq0},\\
  -(\beta,\alpha_i)\in u\mathbb{Z}_{\geq0},&i=n+1,\cdots,n+m-1.\\
\end{cases}
\end{equation}
Since $-(\beta,\theta)<u$ and $-(\beta,\alpha_i)=0~(i=1,\cdots,n-1),
  -(\beta,\alpha_n)\in\mathbb{Z}_{\geq0},
  -(\beta,\alpha_i)\in u\mathbb{Z}_{\geq0}~(i=n+1,\cdots,n+m-1)$,
we have $(\beta,\alpha_i)=0$ for any $i=n+1,\cdots,n+m-1$.
Let $\beta=k_1\alpha_1+\cdots+k_{n+m-1}\alpha_{n+m-1}$,
then (\ref{222}) is equivalent to
$$\begin{cases}
  u-k_{n-1}+k_{n+1}\in\mathbb{Z}_{>0},&\\
  2k_1-k_2=0,&\\
  -k_{i-1}+2k_i-k_{i+1}=0,~~i=2,\cdots,n-1,n+1,\cdots,n+m-2,\\
   k_{n+m-2}-2k_{n+m-1}=0,\\
  k_{n-1}-k_{n+1}\in\mathbb{Z}_{\geq0}.\\
\end{cases}$$
Thus we get $\beta=k_1\alpha_1+2k_1\alpha_2+\cdots+nk_1\alpha_n+\frac{(m-1)n}{m}k_1\alpha_{n+1}+\cdots+\frac{n}{m}k_1\alpha_{n+m-1},$ where $k_1=0,\frac{m}{n-m},\cdots,\frac{m(u-1)}{n-m}$.
Then $\hat{\lambda}=t_\beta.(\mathcal{k}\Lambda_0)$, where $\beta=k_1\alpha_1+2k_1\alpha_2+\cdots+nk_1\alpha_n+\frac{(m-1)n}{m}k_1\alpha_{n+1}+\cdots+\frac{n}{m}k_1\alpha_{n+m-1}, k_1=0,\frac{m}{n-m},\cdots,\frac{m(u-1)}{n-m}$,
up to adding a multiple of $\delta$,
we have $\hat{\lambda}=(\frac{p+1}{u}-1)(n-m)\Lambda_0-\frac{p}{u}(n-m)\Lambda_n,~\mbox{where}~p=0,\cdots,u-1.$
\end{proof}

\subsubsection{Type $C(n+1)$}
Let $\mathfrak{g}$ be the Lie superalgebra of type $C(n+1)$,
that is, $\mathfrak{g}=osp(2|2n)$, where $n>0$.
We fix the Dynkin diagram
$$
\begin{tikzpicture}[
    baseline=-0.5ex,
    line width=0.4pt,
    root/.style={circle, draw, minimum size=10pt, inner sep=0pt},
    otimes/.style={circle, draw, minimum size=10pt, inner sep=0pt,
        path picture={
            \draw (-3.1pt,-3.1pt) -- (3.1pt, 3.1pt);
            \draw (-3.1pt, 3.1pt) -- (3.1pt,-3.1pt);}}]
    \node[otimes, label=below:{$\scriptstyle \alpha_1$}]    (n3) at (0, 0) {};
    \node[root, label=below:{$\scriptstyle \alpha_{2}$}]  (n4) at (0.8, 0) {};
    \node (dots2) at (1.6, 0) {$\cdots$};
    \node[root, label=below:{$\scriptstyle \alpha_{n}$}] (n5) at (2.4, 0) {};
     \node[root, label=below:{$\scriptstyle \alpha_{n+1}$}] (n6) at (3.3, 0) {};
    \draw (n3) -- (n4) -- (dots2) -- (n5);
    \draw[double, double distance=1.5pt] (n5) -- (n6) node[midway] {$<$};
\end{tikzpicture}
$$
and $\Pi=\{\alpha_1,\alpha_2,\cdots,\alpha_{n+1}\}$,
where $\alpha_2,\cdots,\alpha_{n+1}$ are even and $\alpha_1$ is odd.
Then the corresponding Cartan matrix is
$$A=\begin{pmatrix}
 0 & -\frac{1}{2} &0& \cdots &0& 0 & 0\\
 -1 & 2 &-1& \cdots &0& 0 & 0\\
 0 & -1 &2& \cdots &0& 0 & 0\\
 \vdots &  \vdots &\vdots & \ddots& \vdots& \vdots & \vdots \\
  0 & 0 &0& \cdots&2&-1 & 0\\
  0 & 0 &0& \cdots&-1&2 & -2\\
 0 & 0 &0& \cdots&0&-1 & 2
\end{pmatrix}
,$$
and $\theta=\alpha_1+2\alpha_2+\cdots+2\alpha_{n}+\alpha_{n+1}$,
$W=\langle r_{\alpha_i}|i=2,\cdots,n+1\rangle$.
Note that $h^{\vee}=n$, $r^\vee=2$ if $n>1$ and $r^\vee=1$ if $n=1$, and $(\rho, \alpha_1)=0$, $(\rho, \alpha_i)=\frac{1}{2}$ for $i=2,\cdots,n$, $(\rho, \alpha_{n+1})=1$.
Let $\theta_0=\theta-\alpha_1=2\alpha_2+\cdots+2\alpha_{n}+\alpha_{n+1}$.
We can identify $W$ with the Weyl group of $sp_{2n}$, identify $\Delta^0$ with the root system of $sp_{2n}$ and identify $\theta_0$ with the highest root of $sp_{2n}$.
For the extended affine Lie superalgebra $\widehat{\mathfrak{g}}$,
the corresponding Cartan matrix is
$$\widehat{A}=\begin{pmatrix}
0&1&-\frac{1}{2}&0&\cdots &0& 0 & 0\\
 1&0 & -\frac{1}{2} &0& \cdots &0& 0 & 0\\
-1& -1 & 2 &-1& \cdots &0& 0 & 0\\
0& 0 & -1 &2& \cdots &0& 0 & 0\\
  \vdots &\vdots &  \vdots &\vdots & \ddots& \vdots& \vdots & \vdots \\
0&  0 & 0 &0& \cdots&2&-1 & 0\\
 0& 0 & 0 &0& \cdots&-1&2 & -2\\
0& 0 & 0 &0& \cdots&0&-1 & 2
\end{pmatrix}
.$$
From Lemma \ref{boundary}, all boundary admissible levels of $\widehat{\mathfrak{g}}$ are principal,
then we have the following theorem for type $C(n+1)$.

\begin{thm}
Let $\mathfrak{g}=osp(2|2n)$ and
$\mathcal{k}=\frac{n}{u}-n$ be a boundary admissible level of $\widehat{\mathfrak{g}}$.
Then the set $\{L(\hat{\lambda})~|~\hat{\lambda}=(\frac{p+1}{u}-1)n\Lambda_0-\frac{np}{u}\Lambda_1,~\mbox{where}~p=0,1,\cdots,u-1\}$
provides the complete list of irreducible ordinary $L_{\widehat{\mathfrak{g}}}(\mathcal{k},0)$-modules.
\end{thm}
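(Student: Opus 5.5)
We follow the pattern of the $sl(n|m)$ case. Let $L(\hat{\lambda})$ be an irreducible ordinary $L_{\widehat{\mathfrak{g}}}(\mathcal{k},0)$-module. By Lemma \ref{lemor}, $\hat{\lambda}$ is a principal admissible weight of level $\mathcal{k}=\frac{n}{u}-n$ and $L(\lambda)$ is finite-dimensional. By Lemma \ref{lemboundary}, $\hat{\lambda}=(t_\beta y).(\mathcal{k}\Lambda_0)$ with $\beta\in\mathfrak{h}^*$ and $y\in W$ satisfying the positivity conditions listed after that lemma. Here $\theta=\alpha_1+\theta_0$ is odd, and $W$ fixes the $\alpha_1$-part modulo $Q^0$, so $y\theta=\theta-\theta_0+y\theta_0\in\Delta_+$ for every $y$; hence the condition is $u+(\beta,y\theta)\in\mathbb{Z}_{>0}$. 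Finite-dimensionality (Theorem 8 of \cite{Kac77}, type I) is governed only by the even simple roots $\alpha_2,\dots,\alpha_{n+1}$. It gives $\frac{2}{(\alpha_i,\alpha_i)}\big(\frac{n}{u}(\beta,\alpha_i)+(\rho,y^{-1}\alpha_i-\alpha_i)\big)\in\mathbb{Z}_{\geq0}$ for $i=2,\dots,n+1$, after computing $(\hat\lambda,\alpha_i)=\frac{n}{u}(\beta,\alpha_i)+(\rho,y^{-1}\alpha_i-\alpha_i)$ as in the earlier proofs. There is no condition at $\alpha_1$ beyond the typicality/atypicality constraint, which imposes no further restriction.

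Next we exclude $y\neq 1$ using an analogue of Lemma \ref{lemy3} for $W$ the Weyl group of $sp_{2n}$ with highest root $\theta_0$. We show that for $1\ne y\in W$ there is $\alpha\in Q^0_+$ with $\theta+y^{-1}\alpha\in Q_+$ and $(\rho,\alpha-y^{-1}\alpha)\geq n$. The case split is the one used in Lemmas \ref{lemy1} and \ref{lemy3}.
\begin{itemize}
\item If $y^{-1}\theta_0\in-\Delta^0_+$, take $\alpha=\theta_0$. Then $\theta+y^{-1}\theta_0=\alpha_1+(\theta_0+y^{-1}\theta_0)\in Q_+$, and $(\rho,\theta_0-y^{-1}\theta_0)\geq(\rho,\theta_0)+(\rho,\text{long root})$. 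Since $(\rho,\theta_0)=n-\frac12$ and long positive roots have $\rho$-pairing at least $1$, this gives $\geq n$.
\item If $y^{-1}\theta_0\in\Delta^0_+\setminus\{\theta_0\}$, take $\alpha=k\theta_0$ with $k$ large.
\item If $y^{-1}\theta_0=\theta_0$, take $\alpha=k\theta_0+l\alpha_j$ with $(\rho,\alpha_j-y^{-1}\alpha_j)>0$, choosing $l$ large and then $k$ large.
\end{itemize}
Then $u-(\beta,\alpha)=u+(\beta,y\theta)-(\beta,y(\theta+y^{-1}\alpha))$. The second term is a sum of values $-(\beta,y\alpha_i)\in\mathbb{Z}_{\ge0}$, so $u-(\beta,\alpha)\in\mathbb{Z}_{>0}$. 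On the other hand, nonnegativity of $(\hat\lambda,\alpha)$ gives $(\beta,\alpha)\geq\frac{u}{n}(\rho,\alpha-y^{-1}\alpha)\geq u$, a contradiction. The main obstacle is checking the bound $\geq n$ carefully given the half-integral $\rho$-values, and checking that $(\hat\lambda,\alpha)\ge0$ follows from $\alpha\in Q^0_+$ being a nonnegative combination of even simple roots. The latter holds because $\alpha_1$ never appears in $\alpha$.

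For $y=1$ the conditions become $u+(\beta,\theta)\in\mathbb{Z}_{>0}$ and $-(\beta,\alpha_1)\in\mathbb{Z}_{\ge0}$. Additionally, $-(\beta,\alpha_i)\in\mathbb{Z}_{\geq0}$ and $\frac{n}{u}(\beta,\alpha_i)\cdot\frac{2}{(\alpha_i,\alpha_i)}\in\mathbb{Z}_{\geq0}$ for $i\ge2$. These force $(\beta,\alpha_i)=0$ for $i=2,\dots,n+1$, so $\beta$ is a multiple of the weight orthogonal to the even simple roots, i.e. proportional to $\Lambda_1$ restricted to $\mathfrak h$. Write $-(\beta,\alpha_1)=p$. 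Since $(\beta,\theta)=(\beta,\alpha_1)$, the condition $u+(\beta,\theta)>0$ becomes $p\in\{0,1,\dots,u-1\}$. Finally we compute $t_\beta.(\mathcal{k}\Lambda_0)$ modulo $\delta$: the level stays $\mathcal{k}$ and the finite part is $\frac{n}{u}\beta=-\frac{np}{u}\Lambda_1$. Converting the level into $\Lambda_0,\Lambda_1$ coordinates via $\Lambda_0(k)$ and $\Lambda_1(k)=1$ (the coefficient of $\alpha_1^\vee$ in $k$) gives $\hat\lambda=(\frac{p+1}{u}-1)n\Lambda_0-\frac{np}{u}\Lambda_1$. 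Conversely, each such weight is principal admissible by Lemma \ref{lemboundary}, and its finite part $-\frac{np}{u}\Lambda_1$ gives a finite-dimensional $L(\lambda)$ by Kac's criterion, since it pairs to $0$ with every even simple coroot. Lemma \ref{lemor} then shows that all $u$ of them occur, and they are distinct.
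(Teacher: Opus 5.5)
Your proposal is correct and follows the paper's proof essentially step for step: you reduce via Lemmas \ref{lemor} and \ref{lemboundary} together with Kac's criterion on the even simple roots, rule out $y\neq 1$ with the analogue of Lemma \ref{lemy3} (your bound $\geq n$ suffices since $u-(\beta,\alpha)$ is a positive integer), and solve the $y=1$ case to get $\beta=-p\Lambda_1$ with $0\le p\le u-1$. There are two harmless slips: $(\rho,\theta_0)=h^\vee=n$, not $n-\frac12$, which only improves your bound; and $y\theta=\theta-\theta_0+y\theta_0$ is false as an identity because $W$ does not fix $\alpha_1$, although $y\theta\in\Delta_+$ still holds since $y\theta\equiv\theta \bmod Q^0$ has $\alpha_1$-coefficient $1$.
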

\begin{proof}
Let $L(\hat{\lambda})$ be an irreducible ordinary $L_{\widehat{\mathfrak{g}}}(\mathcal{k},0)$-module,
by Lemma \ref{lemor}, we know that $\hat{\lambda}\in \widehat{\mathfrak{h}}^*$ is a principal admissible weight of level $\mathcal{k}$ such that $L(\lambda)$
is a finite-dimensional $\mathfrak{g}$-module, where $\lambda=\hat{\lambda}|_{\mathfrak{h}}$.
Since $y\theta\in \Delta_+$ for all $y\in W$, from Lemma \ref{lemboundary}, we obtain $\hat{\lambda}=(t_\beta y).(\mathcal{k}\Lambda_0)$,
where $\beta\in \mathfrak{h}^*, y\in W$ satisfying
$u+(\beta,y\theta)\in\mathbb{Z}_{>0}$,
and for any $i=1,\cdots,n+1$,
if $y\alpha_i\in \Delta_+$, $-(\beta,y\alpha_i)\in\mathbb{Z}_{\geq0}$,
if $y\alpha_i\in -\Delta_+$, $-(\beta,y\alpha_i)\in\mathbb{Z}_{>0}$.
Then from Theorem 8 of \cite{Kac77}, we obtain $(\hat{\lambda}, \alpha_i^\vee)\in \mathbb{Z}_{\geq0}$,
i.e., $\frac{n}{u}(\beta,\alpha_i)+(\rho,y^{-1}\alpha_i-\alpha_i)\in \frac{(\alpha_i,\alpha_i)}{2}\mathbb{Z}_{\geq0}$,
where $i=2,\cdots,n+1$.

If $y\ne 1$, similar to Lemma \ref{lemy3}, there exists $\alpha\in Q_{+}^0$ satisfying $\theta+y^{-1}\alpha\in Q_+$ and $(\rho,\alpha-y^{-1}\alpha)> n$,
then we have $u-(\beta,\alpha)=u+(\beta,y\theta)-(\beta,y(\theta+y^{-1}\alpha))\in\mathbb{Z}_{>0}$ and $\frac{n}{u}(\beta,\alpha)+(\rho,y^{-1}\alpha-\alpha)\geq0$,
thus $(\beta,\alpha)\geq\frac{u}{n}(\rho,\alpha-y^{-1}\alpha)\geq u$,
this is impossible.
If $y=1$, then we have
\begin{equation}\label{333}
\begin{cases}
  u+(\beta,\theta)\in\mathbb{Z}_{>0},&\\
  (\beta,\alpha_i)=0, &i=2,\cdots,n+1,\\
  -(\beta,\alpha_1)\in\mathbb{Z}_{\geq0}.\\
\end{cases}
\end{equation}
Let $\beta=k_1\alpha_1+\cdots+k_{n+1}\alpha_{n+1}$,
then (\ref{333}) is equivalent to
$$\begin{cases}
  u-k_1+\frac{1}{2}k_2\in\mathbb{Z}_{>0},\\
  \frac{1}{2}k_2\in\mathbb{Z}_{\geq0},\\
  -k_{i-1}+2k_i-k_{i+1}=0, &i=2,\cdots,n-1,\\
   -k_{n-1}+2k_n-2k_{n+1}=0,\\
  -k_n+2k_{n+1}=0.\\
\end{cases}$$
Thus we get $\beta=k_1\alpha_1+k_1\alpha_2+\cdots+k_1\alpha_n+\frac{1}{2}k_1\alpha_{n+1},$ where $k_1=0,2,\cdots,2(u-1)$.
Then $\hat{\lambda}=t_\beta.(\mathcal{k}\Lambda_0)$, where $\beta=k_1\alpha_1+k_1\alpha_2+\cdots+k_1\alpha_n+\frac{1}{2}k_1\alpha_{n+1}, k_1=0,2,\cdots,2(u-1)$,
up to adding a multiple of $\delta$,
we have $\hat{\lambda}=(\frac{p+1}{u}-1)n\Lambda_0-\frac{np}{u}\Lambda_1,~\mbox{where}~p=0,1,\cdots,u-1.$
\end{proof}
\subsection{Basic classical Lie superalgebra of type II}
Basic classical Lie superalgebras of type II consist of type $B(m,n)$, type $D(m,n)$, type $F(4)$, type $G(3)$ and type $D(2|1;a)$.
\subsubsection{Type $B(0,n)$}
Let $\mathfrak{g}$ be the Lie superalgebra of type $B(0,n)$,
that is, $\mathfrak{g}=osp(1|2n)$, where $n>0$.
We fix the Dynkin diagram
$$
\begin{tikzpicture}[
    baseline=-0.5ex,
    line width=0.4pt,
    root/.style={circle, draw, minimum size=10pt, inner sep=0pt},
    otimes/.style={circle, draw, minimum size=10pt, inner sep=0pt,
        path picture={
            \draw (-3.1pt,-3.1pt) -- (3.1pt, 3.1pt);
            \draw (-3.1pt, 3.1pt) -- (3.1pt,-3.1pt);}},
   blackroot/.style={circle, draw, fill=black, minimum size=10pt, inner sep=0pt}]
    \node[root, label=below:{$\scriptstyle \alpha_1$}]    (n3) at (0, 0) {};
    \node (dots2) at (0.8, 0) {$\cdots$};
    \node[root, label=below:{$\scriptstyle \alpha_{n-1}$}] (n5) at (1.6, 0) {};
     \node[blackroot, label=below:{$\scriptstyle \alpha_{n}$}] (n6) at (2.6, 0) {};
    \draw (n3) -- (dots2) -- (n5);
    \draw[double, double distance=1.5pt] (n5) -- (n6) node[midway] {$>$};
\end{tikzpicture}
$$
and $\Pi=\{\alpha_1,\cdots,\alpha_{n}\}$,
where $\alpha_1,\cdots,\alpha_{n-1}$ are even and $\alpha_n$ is odd.
Then $\theta=2\alpha_1+2\alpha_2+\cdots+2\alpha_{n}$ and
$W=\langle r_{\alpha_i},r_{2\alpha_n}\mid i=2,\cdots,n-1\rangle$.
Note that $h^{\vee}=n+\frac{1}{2}$, $r^\vee=2$ if $n>1$ and $r^\vee=1$ if $n=1$, and $(\rho, \alpha_i)=\frac{1}{2}$ for $i=1,\cdots,n-1$, $(\rho, \alpha_n)=\frac{1}{4}$.
We can identify $W$ with the Weyl group of $sp_{2n}$, identify $\Delta^0$ with the root system of $sp_{2n}$ and identify $\theta$ with the highest root of $sp_{2n}$.

\begin{lem}\label{lemy5}
Let $\mathfrak{g}=osp(1|2n)$.
For any $1\ne y\in W$, there exists $\alpha\in Q_+$ satisfying $\theta+y^{-1}\alpha\in Q_+$,
and $\begin{cases}
  (\rho,\alpha-y^{-1}\alpha)\geq n, & \text{if}~y^{-1}\theta\in -\Delta_+^0,\\
  (\rho,\alpha-y^{-1}\alpha)> n+\frac{1}{2},  & \text{if }~y^{-1}\theta\in \Delta_+^0.
\end{cases}$
\end{lem}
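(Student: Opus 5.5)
Copy the proof of Lemma \ref{lemy1}, making two changes: use the values $(\rho,\alpha_i)=\frac12$ for $i<n$ and $(\rho,\alpha_n)=\frac14$, and use the fact that $\theta=2(\alpha_1+\cdots+\alpha_n)$ is the highest root of $sp_{2n}$, hence a long even root. Since $W$ is the Weyl group of $sp_{2n}$ acting on $\Delta^0$, $y^{-1}\theta$ is again a long even root, so $y^{-1}\theta\in\pm\Delta^0_+$. There are three cases, as before.

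\textbf{Case 1: $y^{-1}\theta\in-\Delta^0_+$.} Take $\alpha=\theta$. Then $\theta+y^{-1}\alpha=\theta-(-y^{-1}\theta)$ is $\theta$ minus a positive even root, and every positive root lies below the highest root $\theta$, so this belongs to $Q_+$. For the bound we need $(\rho,\theta)=2(\frac12(n-1)+\frac14)=n-\frac12$. Next we need a lower bound for $(\rho,\gamma)$ on long positive even roots $\gamma$ of $sp_{2n}$. These are $\gamma=2\epsilon_i=2(\alpha_i+\cdots+\alpha_n)$ in the $osp(1|2n)$ realization, with $\alpha_n=\delta_n$ odd and $2\alpha_n$ the long simple root of $sp_{2n}$. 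Their minimum is attained at $\gamma=2\alpha_n$, where $(\rho,2\alpha_n)=\frac12$. Hence
$$(\rho,\theta-y^{-1}\theta)\ge n-\tfrac12+\tfrac12=n,$$
which is the required bound. It is only $\ge n$ rather than $>n+\frac12$, and this matches the statement.

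\textbf{Case 2: $y^{-1}\theta\in\Delta^0_+\setminus\{\theta\}$.} Here $\theta-y^{-1}\theta\in Q_+\setminus\{0\}$, so $(\rho,\theta-y^{-1}\theta)>0$ because $\rho$ pairs positively with every simple root. Choose an integer $k>\frac{n+1/2}{(\rho,\theta-y^{-1}\theta)}$ and put $\alpha=k\theta$. Then $\theta+y^{-1}\alpha=\theta+ky^{-1}\theta\in Q_+$, and $(\rho,\alpha-y^{-1}\alpha)>n+\frac12$.

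\textbf{Case 3: $y^{-1}\theta=\theta$.} Since $y\neq1$, there is a simple root $\gamma$ of $sp_{2n}$ with $y^{-1}\gamma\in-\Delta^0_+$; take $\gamma$ to be $\alpha_j$ ($j<n$) or $2\alpha_n$. This is the analogue of the step $(y\rho,\alpha_j)<0$ in Lemma \ref{lemy1}, applied with the $sp_{2n}$ Weyl vector; alternatively, use the existence of a simple root sent to a negative root (Corollary 10.2 of \cite{Hum78}). Then $(\rho,\gamma-y^{-1}\gamma)>0$. Choose $m$ with $m(\rho,\gamma-y^{-1}\gamma)>n+\frac12$, and then take $k$ large, setting $\alpha=k\theta+m\gamma$. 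Then $\alpha\in Q_+$, and $\theta+y^{-1}\alpha=(k+1)\theta+my^{-1}\gamma\in Q_+$ once $k$ is large, because $\theta$ has strictly positive coefficients on all simple roots. Also $(\rho,\alpha-y^{-1}\alpha)=m(\rho,\gamma-y^{-1}\gamma)>n+\frac12$.

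The only step needing care is the lower bound $(\rho,-y^{-1}\theta)\ge\frac12$ in Case 1. It holds because $-y^{-1}\theta$ is a long positive even root $2\epsilon_i$, and $(\rho,2\epsilon_i)=\frac12+(n-i)\ge\frac12$ with equality only when $i=n$. Everything else is routine positivity of $\rho$ on $Q_+\setminus\{0\}$.
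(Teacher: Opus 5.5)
Your proof is correct and is essentially the paper's argument: the same three cases ($y^{-1}\theta$ negative, positive but $\neq\theta$, equal to $\theta$), with the same choices $\alpha=\theta$, $\alpha=k\theta$ and $\alpha=k\theta+m\gamma$. Your explicit computation $(\rho,2\epsilon_i)=\frac12+(n-i)\ge\frac12$ supplies the bound $(\rho,-y^{-1}\theta)\ge\frac12$ that the paper uses in the first case without justification.
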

\begin{proof}
Since $W$ is the Weyl group of $sp_{2n}$, $\Delta^0$ is the root system of $sp_{2n}$ and $\theta$ is the highest root of $sp_{2n}$,
we have $y^{-1}\theta\in \Delta^0$.
If $y^{-1}\theta\in -\Delta_{+}^0$, let $\alpha=\theta\in Q_{+}$,
then we get $\theta+y^{-1}\theta\in Q_+$
and $(\rho,\theta-y^{-1}\theta)\geq(\rho,\theta)+\frac{1}{2}=n$.
If $y^{-1}\theta\in \Delta_+^0\setminus\{\theta\}$,
then we have $(\rho,\theta-y^{-1}\theta)>0$,
we take integer $k>\frac{n+\frac{1}{2}}{(\rho,\theta-y^{-1}\theta)}$ and let $\alpha=k\theta\in Q_+$,
then we have $\theta+y^{-1}\alpha=\theta+ky^{-1}\theta\in Q_+$ and
$(\rho,\alpha-y^{-1}\alpha)=k(\rho,\theta-y^{-1}\theta)> n+\frac{1}{2}$.
If $y^{-1}\theta=\theta$, since $y\ne1$, then we get $(y\rho, \alpha_j)<0$ for certain $j=1,\cdots,n$,
thus $(\rho-y\rho, \alpha_j)>0$, i.e., $(\rho,\alpha_j-y^{-1}\alpha_j)>0$.
We take integer $m>\frac{n+\frac{1}{2}}{(\rho,\alpha_j-y^{-1}\alpha_j)}$ and let $\alpha=k\theta+m\alpha_j$,
then we have $(\rho,\alpha-y^{-1}\alpha)=(\rho,k\theta+m\alpha_j-ky^{-1}\theta-my^{-1}\alpha_j)=m(\rho,\alpha_j-y^{-1}\alpha_j)> n+\frac{1}{2}$.
Furthermore, let $k$ sufficiently large, then we have $\alpha=k\theta+m\alpha_j\in Q_+$ and
$\theta+y^{-1}\alpha=(k+1)\theta+my^{-1}\alpha_j\in Q_+$.
\end{proof}
Thus the main result for type $B(0,n)$ is the following theorem.
\begin{thm}
Let $\mathfrak{g}=osp(1|2n)$ and
$\mathcal{k}=\frac{2n+1}{2u}-(n+\frac{1}{2})$ be a boundary principal admissible level of $\widehat{\mathfrak{g}}$.
Then $L_{\widehat{\mathfrak{g}}}(\mathcal{k},0)$ itself is the only irreducible ordinary $L_{\widehat{\mathfrak{g}}}(\mathcal{k},0)$-module.
\end{thm}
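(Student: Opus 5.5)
I will follow the same scheme as in the finite-dimensional simple Lie algebra case, using Lemma \ref{lemy5} in place of Lemma \ref{lemy1}. Let $L(\hat{\lambda})$ be an irreducible ordinary $L_{\widehat{\mathfrak{g}}}(\mathcal{k},0)$-module. By Lemma \ref{lemor}, $\hat{\lambda}$ is a principal admissible weight of level $\mathcal{k}$ with $L(\lambda)$ finite-dimensional. Lemma \ref{lemboundary} then gives $\hat{\lambda}=(t_\beta y).(\mathcal{k}\Lambda_0)$, and the positivity conditions on $u+(\beta,y\theta)$ and $-(\beta,y\alpha_i)$ listed after that lemma hold. Up to a multiple of $\delta$,
\[
\hat{\lambda}|_{\mathfrak{h}}=\tfrac{h^\vee}{u}\beta'+y\rho-\rho,\qquad \beta'=\tfrac{u}{h^\vee}(\mathcal{k}+h^\vee)\beta=\beta .
\]
Hence $(\hat{\lambda},\alpha)=\frac{h^\vee}{u}(\beta,\alpha)+(\rho,y^{-1}\alpha-\alpha)$ after replacing $\beta$ by $y^{-1}\beta$; I will use the same convention as the paper.

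Finite-dimensionality of $L(\lambda)$ for $osp(1|2n)$ is the standard Kac condition: $\lambda$ is dominant integral for $sp_{2n}$, with the extra evenness condition at the odd simple root. In particular $(\hat{\lambda},\alpha)\geq 0$ for every $\alpha\in Q_+$, since $\lambda$ is dominant. This positivity is the only consequence I need. I then split into the three cases used before.

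\textbf{Case $y^{-1}\theta\in-\Delta^0_+$.} Apply Lemma \ref{lemy5} with $\alpha=\theta$. Writing $u-(\beta,\theta)=u+(\beta,y\theta)-(\beta,y(\theta+y^{-1}\theta))$, the second term is a nonpositive integer because $\theta+y^{-1}\theta\in Q_+$ and the simple-root conditions hold. So $(\beta,\theta)\le u$, and
\[
0\le(\hat{\lambda},\theta)\le \tfrac{h^\vee}{u}(\beta,\theta)-n\le h^\vee-n=\tfrac12 .
\]
To close this case I use integrality: $(\hat\lambda,\theta^\vee)=(\hat\lambda,\theta)$ must be an integer, because $\theta$ has square length $2$ and $\lambda$ is integral for $sp_{2n}$. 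Hence $(\hat{\lambda},\theta)=0$. Then dominance and $\theta$ having positive coefficients on all simple roots force $(\hat\lambda,\alpha_i)=0$ for all $i$, so $\hat{\lambda}=\mathcal{k}\Lambda_0$.

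\textbf{Case $y\ne1$ and $y^{-1}\theta\in\Delta^0_+$.} Lemma \ref{lemy5} supplies $\alpha$ with $(\rho,\alpha-y^{-1}\alpha)>h^\vee$. The same manipulation gives $(\beta,\alpha)\le u$ and $(\hat\lambda,\alpha)<\frac{h^\vee}{u}(\beta,\alpha)-h^\vee\le0$, which contradicts $(\hat\lambda,\alpha)\ge0$.

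\textbf{Case $y=1$.} Here $(\hat{\lambda},\alpha_i)=\frac{h^\vee}{u}(\beta,\alpha_i)$ with $(\beta,\alpha_i)\in-\mathbb{Z}_{\ge0}$. Dominance then forces $(\beta,\alpha_i)=0$ for all $i$, so $\beta=0$.

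I expect the main obstacle to be the first case. The bound there is only $\frac12$ rather than $0$, because $h^\vee=n+\frac12$ is not an integer. One must therefore check carefully that the finite-dimensionality conditions at the odd root $\alpha_n$ (and the integrality of $(\lambda,\theta)$) exclude the value $\frac12$. If $(\hat\lambda,\theta)$ could take half-integer values, I would instead use the coprimality $(2u,2n+1)=1$ to rule out $(\hat{\lambda},\theta)=\frac12$, since $\frac{2n+1}{2u}(\beta,\theta)$ would then have to be a specific half-integer.
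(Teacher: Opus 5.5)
Your proposal is correct and follows essentially the same route as the paper: the same three-case split via Lemma \ref{lemy5}, the bound $0\le(\hat\lambda,\theta)\le\frac12$ closed by integrality of $(\hat\lambda,\theta)=(\hat\lambda,\theta^\vee)$, and the same dominance contradictions in the other two cases, so the coprimality fallback you mention is not needed. One small correction: drop the remark about replacing $\beta$ by $y^{-1}\beta$, since $\hat\lambda|_{\mathfrak h}=\frac{h^\vee}{u}\beta+y\rho-\rho$ already gives $(\hat\lambda,\alpha)=\frac{h^\vee}{u}(\beta,\alpha)+(\rho,y^{-1}\alpha-\alpha)$ with the same $\beta$ that appears in the conditions of Lemma \ref{lemboundary}, and an actual replacement would break those conditions.
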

\begin{proof}
Let $L(\hat{\lambda})$ be an irreducible ordinary $L_{\widehat{\mathfrak{g}}}(\mathcal{k},0)$-module,
from Lemma \ref{lemboundary} and Lemma \ref{lemor}, we obtain $\hat{\lambda}=(t_\beta y).(\mathcal{k}\Lambda_0)$
satisfying if $y\theta\in -\Delta_+$, $u+(\beta,y\theta)\in\mathbb{Z}_{\geq0}$,
if $y\theta\in \Delta_+$, $u+(\beta,y\theta)\in\mathbb{Z}_{>0}$,
and for any $i=1,\cdots,n$,
if $y\alpha_i\in \Delta_+$, $-(\beta,y\alpha_i)\in\mathbb{Z}_{\geq0}$,
if $y\alpha_i\in -\Delta_+$, $-(\beta,y\alpha_i)\in\mathbb{Z}_{>0}$,
where $\beta\in \mathfrak{h}^*, y\in W$.
Then from Theorem 8 of \cite{Kac77},
we obtain
$(\hat{\lambda}, \alpha_i^\vee)\in \mathbb{Z}_{\geq0}$ and $(\hat{\lambda}, \alpha_n^\vee)\in 2\mathbb{Z}_{\geq0}$,
 i.e., $\frac{n+\frac{1}{2}}{u}(\beta,\alpha_i)+(\rho,y^{-1}\alpha_i-\alpha_i)\in \frac{1}{2}\mathbb{Z}_{\geq0}$,
 where $i=1,\cdots,n$.

If $y^{-1}\theta\in -\Delta_+^0$, by Lemma \ref{lemy5},
we have $u-(\beta,\theta)= u+(\beta,y\theta)-(\beta,y(\theta+y^{-1}\theta))\in\mathbb{Z}_{\geq0}$
and $\frac{n+\frac{1}{2}}{u}(\beta,\theta)+(\rho,y^{-1}\theta-\theta)\in\mathbb{Z}_{\geq0}$,
then $\frac{n+\frac{1}{2}}{u}(\beta,\theta)+(\rho,y^{-1}\theta-\theta)\leq \frac{n+\frac{1}{2}}{u}(\beta,\theta)-(n+\frac{1}{2})+\frac{1}{2}
=\frac{n+\frac{1}{2}}{u}((\beta,\theta)-u)+\frac{1}{2}\leq \frac{1}{2}$,
thus $\frac{n+\frac{1}{2}}{u}(\beta,\theta)+(\rho,y^{-1}\theta-\theta)=0$,
i.e., $(\hat{\lambda}, \theta)=0$. This shows that if $\beta$ exists,
we have $(\hat{\lambda}, \alpha_i)=0$ for any $i=1,\cdots,n$,
then $\hat{\lambda}|_{\mathfrak{h}}=0$, up to adding a multiple of $\delta$, $\hat{\lambda}=\mathcal{k}\Lambda_0$.
If $y\ne 1$ and $y^{-1}\theta\in \Delta_+^0$, by Lemma \ref{lemy5}, there exists $\alpha\in Q_+$ satisfying $\theta+y^{-1}\alpha\in Q_+$ and $(\rho,\alpha-y^{-1}\alpha)>n+\frac{1}{2}$,
then we have $u-(\beta,\alpha)= u+(\beta,y\theta)-(\beta,y(\theta+y^{-1}\alpha))\in\mathbb{Z}_{\geq0}$ and $\frac{n+\frac{1}{2}}{u}(\beta,\alpha)+(\rho,y^{-1}\alpha-\alpha)\geq0$, thus $\frac{n+\frac{1}{2}}{u}(\beta,\alpha)+(\rho,y^{-1}\alpha-\alpha)< \frac{h^\vee}{u}(\beta,\alpha)-(n+\frac{1}{2})
=\frac{n+\frac{1}{2}}{u}((\beta,\alpha)-u)\leq 0$,
this is impossible.
If $y=1$, then we have $(\beta,\alpha_i)=0$ for any $i=1,\cdots,n$, thus $\beta=0$, i.e., $\hat{\lambda}=\mathcal{k}\Lambda_0$.
This implies that $L_{\widehat{\mathfrak{g}}}(\mathcal{k},0)$ itself is the only irreducible ordinary $L_{\widehat{\mathfrak{g}}}(\mathcal{k},0)$-module.
\end{proof}

\subsubsection{Type $B(m,n)$}

Let $\mathfrak{g}$ be the Lie superalgebra of type $B(m,n)$,
that is, $\mathfrak{g}=osp(2m+1|2n)$, where $m,n>0$.
We fix the Dynkin diagram
$$
\begin{tikzpicture}[
    baseline=-0.5ex,
    line width=0.4pt,
    root/.style={circle, draw, minimum size=10pt, inner sep=0pt},
    otimes/.style={circle, draw, minimum size=10pt, inner sep=0pt,
        path picture={
            \draw (-3.1pt,-3.1pt) -- (3.1pt, 3.1pt);
            \draw (-3.1pt, 3.1pt) -- (3.1pt,-3.1pt);}},
   blackroot/.style={circle, draw, fill=black, minimum size=10pt, inner sep=0pt}]
    \node[root, label=below:{$\scriptstyle \alpha_1$}]    (n1) at (0, 0) {};
    \node (dots1) at (0.8, 0) {$\cdots$};
     \node[root, label=below:{$\scriptstyle \alpha_{n-1}$}]    (n2) at (1.6, 0) {};
    \node[otimes, label=below:{$\scriptstyle \alpha_{n}$}] (n3) at (2.4, 0) {};
      \node[root, label=below:{$\scriptstyle \alpha_{n+1}$}]    (n4) at (3.2, 0) {};
       \node (dots2) at (4, 0) {$\cdots$};
     \node[root, label=below:{$\scriptstyle \alpha_{n+m-1}$}] (n5) at (4.8, 0) {};
      \node[root, label=below:{$\scriptstyle \alpha_{n+m}$}] (n6) at (5.8, 0) {};
    \draw (n1) -- (dots1) -- (n2)--(n3)--(n4)--(dots2)--(n5);
    \draw[double, double distance=1.5pt] (n5) -- (n6) node[midway] {$>$};
\end{tikzpicture}
$$
and $\Pi=\{\alpha_1,\cdots,\alpha_{n+m}\}$,
where $\alpha_1,\cdots,\alpha_{n-1},\alpha_{n+1},\cdots,\alpha_{n+m}$ are even and $\alpha_n$ is odd.
Note that $\theta=2\alpha_1+\cdots+2\alpha_{n+m}$.
Let $\alpha_n^\prime=2\alpha_n+\cdots+2\alpha_{n+m}$.
We have $W=W_1\times W_2$, where $W_1=\langle r_{\alpha_i}, r_{\alpha_n^\prime}\mid i=1,\cdots,n-1\rangle$ and $W_2=\langle r_{\alpha_i}|i=n+1,\cdots,n+m\rangle$,
we can identify $W_1$ with the Weyl group of $sp_{2n}$, identify $\theta$ with the highest root of $sp_{2n}$,
identify $W_2$ with the Weyl group of $o_{2m+1}$ and identify $\theta^\prime=\alpha_{n+1}+2\alpha_{n+2}+\cdots+2\alpha_{n+m}$ with the highest root of $o_{2m+1}$.
We also have $\Delta^0=\Delta_1\cup\Delta_2$, and we can identify $\Delta_1$ with the root system of $sp_{2n}$ and
identify $\Delta_2$ with the root system of $o_{2m+1}$.
From Lemma \ref{boundary}, all boundary admissible levels of $\widehat{\mathfrak{g}}$ are principal.
In the following, we classify the irreducible ordinary modules for $L_{\widehat{\mathfrak{g}}}(\mathcal{k},0)$
by considering two cases, one is $m>n$, the other is $m\leq n$.

{\bf Case 1: $m>n$.} 
Note that $h^{\vee}=2(m-n)-1$, $r^\vee=2$,
$(\rho, \alpha_i)=-1$ for $i=1,\cdots,n-1$, $(\rho, \alpha_n)=0$,
$(\rho, \alpha_i)=1$ for $i=n+1,\cdots,n+m-1$ and $(\rho, \alpha_{n+m})=\frac{1}{2}$.
Thus the main result for type $B(m,n)~(m>n)$ is the following theorem.
\begin{thm}\label{thmbmn}
Let $\mathfrak{g}=osp(2m+1|2n)~(m>n>0)$ and
$\mathcal{k}=\frac{2(m-n)-1}{u}-2(m-n)+1$ be a boundary admissible level of $\widehat{\mathfrak{g}}$.
Then $L_{\widehat{\mathfrak{g}}}(\mathcal{k},0)$ itself is the only irreducible ordinary $L_{\widehat{\mathfrak{g}}}(\mathcal{k},0)$-module.
\end{thm}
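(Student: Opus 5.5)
We follow the template of the earlier cases. Let $L(\hat{\lambda})$ be an irreducible ordinary $L_{\widehat{\mathfrak{g}}}(\mathcal{k},0)$-module. By Lemma \ref{lemor} and Lemma \ref{lemboundary}, $\hat{\lambda}=(t_\beta y).(\mathcal{k}\Lambda_0)$ with $y=y_1y_2\in W_1\times W_2$, and $\beta$ satisfies the integrality and positivity conditions listed after Lemma \ref{lemboundary}. The restriction $\lambda$ is $\frac{h^\vee}{u}\beta+y^{-1}\rho-\rho$ up to the usual identifications. Finite dimensionality of $L(\lambda)$ forces, by Theorem 8 of \cite{Kac77}, $(\hat{\lambda},\alpha_i^\vee)\in\mathbb{Z}_{\geq0}$ for the even simple roots $\alpha_{n+1},\dots,\alpha_{n+m}$ of $o_{2m+1}$. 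For $i=1,\dots,n-1$ together with the root $\alpha_n'$, the condition has the opposite sign, because the $sp_{2n}$-part of the form is negative ($(\rho,\alpha_i)=-1$ there). This is the sign pattern seen in the $sl(n|m)$ case, now with $h^\vee=2(m-n)-1>0$ carried by the $o_{2m+1}$ part. In terms of $\beta$ these read
\[
\tfrac{h^\vee}{u}(\beta,\alpha_i)+(\rho,y^{-1}\alpha_i-\alpha_i)\in\tfrac{(\alpha_i,\alpha_i)}{2}\mathbb{Z}_{\geq0}.
\]

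\textbf{Step 1: $y_1\neq 1$.} Pick $i\in\{1,\dots,n-1\}$ or the root $\alpha_n'$ with $y^{-1}\alpha_i\in-\Delta_{1,+}$, as in the $1\neq y\in W_2$ case for $sl(n|m)$. The $\widehat{\Pi}_u$ conditions then give $(\beta,\alpha_i)\geq 0$ with the appropriate sign. On the other hand, $(\rho,y^{-1}\alpha_i-\alpha_i)$ is strictly positive here, since $\rho$ pairs negatively with $\Delta_{1,+}$. Together these contradict the reversed-sign finite-dimensionality condition. So we may take $y=y_2\in W_2$.

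\textbf{Step 2: $y_2\neq1$.} Prove an analogue of Lemma \ref{lemy1}/Lemma \ref{lemy3}: for $1\neq y\in W_2$ there is $\alpha\in Q_{2,+}$ with $\theta+y^{-1}\alpha\in Q_+$ and
\[
(\rho,\alpha-y^{-1}\alpha)\geq h^\vee=2(m-n)-1,
\]
with strict inequality when $y^{-1}\theta'\in\Delta_{2,+}$. The construction mirrors those lemmas with $\theta'$ in place of $\theta$. The starting observation is that $\theta=2(\alpha_1+\dots+\alpha_n)+\theta'+(\text{stuff})$ contains $\theta'$ with positive coefficient, so $\theta+y^{-1}\alpha\in Q_+$ can be arranged. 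When $y^{-1}\theta'\in-\Delta_{2,+}$, use $\alpha=\theta'$ and $(\rho,\theta')=2m-2$; one checks that $(\rho,\theta'-y^{-1}\theta')\geq 2m-1\geq h^\vee$. Otherwise take large multiples as before. Feeding this into $u-(\beta,\alpha)\in\mathbb{Z}_{\geq0}$ and the nonnegativity of $\frac{h^\vee}{u}(\beta,\alpha)+(\rho,y^{-1}\alpha-\alpha)$ gives the same squeeze as in the simple Lie algebra theorem. The strict case is impossible. The nonstrict case ($y^{-1}\theta'$ negative) forces the relevant pairing to vanish, and we must check that this propagates to $\lambda=0$; the cleaner route is to rule this case out directly from the $\alpha_i$ conditions.

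\textbf{Step 3: $y=1$.} The even conditions give $(\beta,\alpha_i)=0$ for the $o_{2m+1}$ simple roots, and $(\beta,\alpha_i)=0$ for $i<n$ after reconciling the sign constraints. The odd condition gives $-(\beta,\alpha_n)\in\mathbb{Z}_{\geq 0}$, and we also have $u+(\beta,\theta)>0$. Solve for $\beta=\sum k_i\alpha_i$ as in the type I cases. The key difference from type I is that the $sp_{2n}$ condition involves $\alpha_n'=2\alpha_n+\dots+2\alpha_{n+m}$, which contains the odd root. Requiring $(\lambda,\alpha_n')$ to give a finite-dimensional $sp_{2n}$-part, with the sign reversal, forces $(\beta,\alpha_n)=0$ via the bound $-(\beta,\theta)<u$. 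This is exactly the type II phenomenon $Q/Q^0\cong\mathbb{Z}_2$. Hence $\beta=0$ and $\hat{\lambda}=\mathcal{k}\Lambda_0$.

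I expect Step 3 to be the main obstacle, together with the careful sign bookkeeping for the $sp_{2n}$ block in Steps 1 and 3. There one must compute $(\rho,\alpha_n')$ and $(\alpha_n',\alpha_n')$ and check that the inequality $u-(\beta,\theta)>0$ genuinely excludes every nonzero admissible value of $(\beta,\alpha_n)$.
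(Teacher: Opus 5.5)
\textbf{Verdict: genuine gap in Step 1.} Steps 2 and 3 essentially follow the paper, but Step 1 fails for every $y_1$ that involves a sign change.

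\textbf{Why Step 1 fails.} The claim that $\rho$ pairs negatively with $\Delta_{1,+}$ is false for $m>n$. The $sp_{2n}$-component of $\rho$ is $\sum_i(n-i+\frac12-m)\delta_i$ with $(\delta_i,\delta_i)=-1$. Hence $(\rho,\delta_i-\delta_j)=i-j<0$ for $i<j$, but $(\rho,\delta_i+\delta_j)=2(m-n)+i+j-1>0$ and $(\rho,2\delta_i)=2(m-n)+2i-1>0$. For example, $(\rho,\alpha_n')=2m-1$ and $(\rho,\theta)=2(m-n)+1$.

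So your sign argument only covers $y_1$ in the permutation subgroup $W_1'=\langle r_{\alpha_1},\dots,r_{\alpha_{n-1}}\rangle$; this is exactly the paper's case $1\neq y\in W_1'$. Take instead $y=r_{\alpha_n'}$ (for $n=1$ this is $r_\theta$, the only nontrivial element of $W_1$). The only simple root of $sp_{2n}$ that $y^{-1}$ makes negative is $\alpha_n'$, and $(\rho,y^{-1}\alpha_n'-\alpha_n')=-2(2m-1)<0$. The reversed-sign condition $-\frac{h^\vee}{u}(\beta,\alpha_n')+2(2m-1)\in 2\mathbb{Z}_{\geq 0}$ is then compatible with $(\beta,\alpha_n')>0$. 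For instance, $(\beta,\alpha_n')=2u$ satisfies both of your Step 1 constraints. Signs alone give no contradiction here.

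\textbf{What is missing.} You need the paper's separate argument for $y\in W_1\setminus W_1'$, which uses the upper bound from $u\delta-\theta$ together with the arithmetic of $u$:
\begin{enumerate}
\item $W_1'$ is exactly the stabilizer of the set $\{2\delta_1,\dots,2\delta_n\}$ of positive long roots of $sp_{2n}$. So for such $y$ there is a long $\alpha$ with $y^{-1}\alpha$ negative long.
\item The $\widehat{\Pi}_u$ conditions give $(\beta,\alpha)\in\mathbb{Z}_{>0}$. Since $\theta+y^{-1}\alpha\in Q_+$, they also give $u-(\beta,\alpha)\in\mathbb{Z}_{\geq 0}$.
\item Apply finite-dimensionality at $\alpha^\vee=-\alpha/2$. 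Since $(\rho,\gamma)$ is odd for every long $\gamma\in\Delta_1$, the quantity $(\rho,\alpha-y^{-1}\alpha)$ is even, and this gives $h^\vee(\beta,\alpha)\in 2u\mathbb{Z}$.
\item As $h^\vee=2(m-n)-1$ is odd and prime to $u$, $(\beta,\alpha)\in 2u\mathbb{Z}$. This contradicts $0<(\beta,\alpha)\leq u$.
\end{enumerate}

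\textbf{Step 2.} This is the paper's case $y\notin W_1$. State it for all $y$ with $y_2\neq 1$; this costs nothing, since $W_1$ fixes $Q_{2,+}$ pointwise. Your own estimate $(\rho,\theta'-y^{-1}\theta')\geq 2m-1>2m-2n-1=h^\vee$ is strict, so the ``nonstrict case'' you worry about never occurs.

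\textbf{Step 3.} Signs alone do not kill $(\beta,\alpha_i)$ for $i<n$: both constraints only say $(\beta,\alpha_i)\leq 0$. You need $(u,h^\vee)=1$ to get $(\beta,\alpha_i)\in -u\mathbb{Z}_{\geq 0}$ for $i<n$. Likewise, once the $o_{2m+1}$ pairings vanish, $(\beta,\alpha_n)=\frac12(\beta,\alpha_n')\in -u\mathbb{Z}_{\geq 0}$. After that, $-(\beta,\theta)=-2\sum_{i\leq n}(\beta,\alpha_i)<u$ forces $\beta=0$.
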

\begin{proof}
Let $L(\hat{\lambda})$ be an irreducible ordinary $L_{\widehat{\mathfrak{g}}}(\mathcal{k},0)$-module,
from Lemma \ref{lemboundary} and Lemma \ref{lemor}, we obtain $\hat{\lambda}=(t_\beta y).(\mathcal{k}\Lambda_0)$
satisfying if $y\theta\in -\Delta_+$, $u+(\beta,y\theta)\in\mathbb{Z}_{\geq0}$,
if $y\theta\in \Delta_+$, $u+(\beta,y\theta)\in\mathbb{Z}_{>0}$,
and for any $i=1,\cdots,n+m$,
if $y\alpha_i\in \Delta_+$, $-(\beta,y\alpha_i)\in\mathbb{Z}_{\geq0}$,
if $y\alpha_i\in -\Delta_+$, $-(\beta,y\alpha_i)\in\mathbb{Z}_{>0}$,
where $\beta\in \mathfrak{h}^*, y\in W$.
Then from Theorem 8 of \cite{Kac77}, we obtain
$(\hat{\lambda}, \alpha_i^\vee)\in \mathbb{Z}_{\geq0} ~(i=1,\cdots,n-1,n+1,\cdots,n+m)$ and $(\hat{\lambda}, {\alpha_n^\prime}^\vee)\in \mathbb{Z}_{\geq0}$,
i.e.,
$$\begin{cases}
  \frac{2(m-n)-1}{u}(\beta,\alpha_i)+(\rho,y^{-1}\alpha_i-\alpha_i)\in -\mathbb{Z}_{\geq0},&i=1,\cdots,n-1,\\
 \frac{2(m-n)-1}{u}(\beta,\alpha_n^\prime)+(\rho,y^{-1}\alpha_n^\prime-\alpha_n^\prime)\in -2\mathbb{Z}_{\geq0},\\
  \frac{2(m-n)-1}{u}(\beta,\alpha_i)+(\rho,y^{-1}\alpha_i-\alpha_i)\in \mathbb{Z}_{\geq0},&i=n+1,\cdots,n+m-1,\\
 \frac{2(m-n)-1}{u}(\beta,\alpha_{n+m})+(\rho,y^{-1}\alpha_{n+m}-\alpha_{n+m})\in \frac{1}{2}\mathbb{Z}_{\geq0}.\\
\end{cases}$$

If $y\notin W_1$, similar to Lemma \ref{lemy3}, there exists $\alpha\in Q_{2,+}$ satisfying $\theta+y^{-1}\alpha\in Q_+$ and $(\rho,\alpha-y^{-1}\alpha)>2(m-n)-1$, where $Q_{2,+}=\bigoplus_{i=n+1}^{n+m}\mathbb{Z}_{\geq0}\alpha_i$.
Then we have $u-(\beta,\alpha)=u+(\beta,y\theta)-(\beta,y(\theta+y^{-1}\alpha))\in\mathbb{Z}_{\geq0}$ and $\frac{2(m-n)-1}{u}(\beta,\alpha)+(\rho,y^{-1}\alpha-\alpha)\geq0$, thus $(\beta,\alpha)\geq\frac{u}{2(m-n)-1}(\rho,\alpha-y^{-1}\alpha)>u$,
this is impossible.
If $1\ne y\in W_1^\prime$, where $W_1^\prime=\langle r_{\alpha_i}|i=1,\cdots,n-1\rangle$.
Note that $W_1^\prime$ is the Weyl group of $sl_{n}$, let $\Delta_1^\prime\subset\Delta_1$ be the corresponding root system,
then from Corollary 10.2 of \cite{Hum78}, there exist $\alpha_i ~(i\in\{1,\cdots,n-1\})$ satisfying $y^{-1}\alpha_i\in -\Delta_{1,+}^\prime$,
thus we have $(\beta,\alpha_i)=-(\beta,y(-y^{-1}\alpha_i))\in\mathbb{Z}_{\geq0}$ and $-\frac{2(m-n)-1}{u}(\beta,\alpha_i)-(\rho,y^{-1}\alpha_i-\alpha_i)\in \mathbb{Z}_{\geq0}$,
while we note that $(\rho,y^{-1}\alpha_i-\alpha_i)>0$, this is also impossible.
If $y\in W_1\backslash W_1^\prime$,
denote by \(\Delta_{1,+}^l=\{ 2\sum_{i=k}^{n-1} \alpha_i + \alpha_n^\prime ~(k=1, \dots, n-1), \alpha_n^\prime\}\)
the set of positive long roots of $\Delta_1$, then all long roots in \(\Delta_{1,+}^l\)
form an orthonormal basis of the root space of $sp_{2n}$,
and \(W_1'\) acts transitively on \(\Delta_{1,+}^l\), thus
$W_1' $ is the stabilizer subgroup of $\Delta_{1,+}^l$ in $W_1$.
For any \(y \in W_1\setminus W_1'\) we have \(y^{-1}\Delta_{1,+}^l \neq \Delta_{1,+}^l\),
so there exists \(\alpha \in \Delta_{1,+}^l\) with \(y^{-1}\alpha \notin \Delta_{1,+}^l\).
Since \(y^{-1}\) preserves root lengths, we have \(y^{-1}\alpha\) is a long root and therefore negative, i.e. \(y^{-1}\alpha \in -\Delta_{1,+}^l\).
For any long root $\gamma\in \Delta_1$, it is clear that $(\rho, \gamma)$ is odd, thus $(\rho,\alpha-y^{-1}\alpha)$ is even.
Suppose that $y\alpha_i\in \Delta_+$ for any $\alpha_i$ appeared in $-y^{-1}\alpha$,
then $-\alpha=y(-y^{-1}\alpha)\in \Delta_+$, it is impossible, thus $(\beta, \alpha)=-(\beta,y(-y^{-1}\alpha))\in \mathbb{Z}_{>0}$.
Then we have $u-(\beta,\alpha)=u+(\beta,y\theta)-(\beta,y(\theta+y^{-1}\alpha))\in\mathbb{Z}_{\geq0}$,
and $(\beta,\alpha)\in \frac{2u}{2(m-n)-1}(\frac{(\rho,\alpha-y^{-1}\alpha)}{2}-\mathbb{Z}_{\geq0})$, this is also impossible.
If $y=1$, then we have
$$\begin{cases}
u+(\beta,\theta)\in\mathbb{Z}_{>0},\\
(\beta,\alpha_i)\in-\mathbb{Z}_{\geq0},&i=1,\cdots,n+m,\\
(\beta,\alpha_i)\in -\frac{u}{2(m-n)-1}\mathbb{Z}_{\geq0},&i=1,\cdots,n-1,\\
(\beta,\alpha_n^\prime)\in -\frac{2u}{2(m-n)-1}\mathbb{Z}_{\geq0},\\
(\beta,\alpha_i)\in \frac{u}{2(m-n)-1}\mathbb{Z}_{\geq0},&i=n+1,\cdots,n+m-1,\\
(\beta,\alpha_{n+m})\in \frac{u}{4(m-n)-2}\mathbb{Z}_{\geq0},
\end{cases}$$
note that $(u,2(m-n)-1)=(u,2)=1$, we have $(\beta,\alpha_i)=0$ for $i=1,\cdots,n+m$, thus $\beta=0$, i.e., $\hat{\lambda}=\mathcal{k}\Lambda_0$.
This implies that $L_{\widehat{\mathfrak{g}}}(\mathcal{k},0)$ itself is the only irreducible ordinary $L_{\widehat{\mathfrak{g}}}(\mathcal{k},0)$-module.
\end{proof}

{\bf Case 2: $m\leq n$.} 
Note that $h^{\vee}=n-m+\frac{1}{2}$, $r^\vee=2$ if $n>1$ and $r^\vee=1$ if $n=1$,
$(\rho, \alpha_i)=\frac{1}{2}$ for $i=1,\cdots,n-1$, $(\rho, \alpha_n)=0$,
$(\rho, \alpha_i)=-\frac{1}{2}$ for $i=n+1,\cdots,n+m-1$ and $(\rho, \alpha_{n+m})=-\frac{1}{4}$.
Thus the main result for type $B(m,n)~(m\leq n)$ is the following theorem.

\begin{thm}\label{thmbmn1}
Let $\mathfrak{g}=osp(2m+1|2n)~(0<m\leq n)$ and
$\mathcal{k}=\frac{2(n-m)+1}{2u}-(n-m+\frac{1}{2})$ be a boundary admissible level of $\widehat{\mathfrak{g}}$.
Then $L_{\widehat{\mathfrak{g}}}(\mathcal{k},0)$ itself is the only irreducible ordinary $L_{\widehat{\mathfrak{g}}}(\mathcal{k},0)$-module.
\end{thm}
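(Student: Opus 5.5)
We follow the template of Theorems for $osp(1|2n)$ and Theorem \ref{thmbmn}. Let $L(\hat{\lambda})$ be an irreducible ordinary $L_{\widehat{\mathfrak{g}}}(\mathcal{k},0)$-module. By Lemma \ref{lemor} and Lemma \ref{lemboundary}, we write $\hat{\lambda}=(t_\beta y).(\mathcal{k}\Lambda_0)$ with $y\in W=W_1\times W_2$ and $\beta$ satisfying the positivity conditions on $(t_\beta y)\widehat{\Pi}_u$. Here $\theta=2\alpha_1+\cdots+2\alpha_{n+m}$ is the highest root of the $sp_{2n}$ part. By Theorem 8 of \cite{Kac77}, finite-dimensionality of $L(\lambda)$ gives $(\hat\lambda,\alpha_i^\vee)\in\mathbb{Z}_{\ge0}$ for the even simple roots $\alpha_1,\dots,\alpha_{n-1}$ and for $\alpha_n'$ (the $sp_{2n}$ long simple root). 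It also gives $(\hat\lambda,\alpha_i^\vee)\in\mathbb{Z}_{\ge0}$ for $i=n+1,\dots,n+m$ (the $o_{2m+1}$ simple roots, now with negative $\rho$-values). Using $(\hat\lambda,\gamma)=\frac{h^\vee}{u}(\beta,\gamma)+(\rho,y^{-1}\gamma-\gamma)$ with $h^\vee=n-m+\frac12$ and the listed values of $(\rho,\alpha_i)$, we translate these into explicit membership conditions. The signs are opposite to Case 1: the $sp_{2n}$ side has $(\rho,\alpha_i)>0$ and the $o_{2m+1}$ side has $(\rho,\alpha_i)<0$.

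We then split on $y=y_1y_2$.

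\emph{Case $y_1\ne1$ with $y_1^{-1}\theta\in-\Delta_{1,+}$.} Take $\alpha=\theta$. Then $u-(\beta,\theta)\in\mathbb{Z}_{\ge0}$. Also $(\rho,\theta-y^{-1}\theta)\ge(\rho,\theta)+\frac12=h^\vee$, since $(\rho,\theta)=n-m$ and the long root $-y^{-1}\theta$ has $\rho$-value at least $\frac12$ after accounting for the odd part. This forces $(\hat\lambda,\theta)\le \frac12$ with integrality, so $(\hat\lambda,\theta)=0$. Because $\theta$ dominates all simple coroots on the $sp$ side, the Dynkin labels on $\alpha_1,\dots,\alpha_{n-1},\alpha_n'$ vanish, exactly as in the $osp(1|2n)$ argument.

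\emph{Case $y_1\ne1$ with $y_1^{-1}\theta\in\Delta_{1,+}$.} Mimic Lemma \ref{lemy5} inside $Q_{1,+}$. Choose $\alpha=k\theta$ or $k\theta+l\alpha_j$ with $(\rho,\alpha-y^{-1}\alpha)>h^\vee$ and $\theta+y^{-1}\alpha\in Q_+$. The inequality $\frac{h^\vee}{u}((\beta,\alpha)-u)\le0$ then contradicts nonnegativity.

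\emph{Case $y_1=1$, $y_2\ne1$.} Use Corollary 10.2 of \cite{Hum78} to find a simple root $\alpha_i$ ($n<i\le n+m$) with $y^{-1}\alpha_i\in-\Delta_{2,+}$. This gives $(\beta,\alpha_i)\in\mathbb{Z}_{>0}$, while $(\rho,y^{-1}\alpha_i-\alpha_i)>0$ because $\rho$ is antidominant on $\Delta_2$. The sign condition $-\frac{h^\vee}{u}(\beta,\alpha_i)-(\rho,y^{-1}\alpha_i-\alpha_i)\in\frac{(\alpha_i,\alpha_i)}2\mathbb{Z}_{\ge0}$ is then violated. I expect this mirrors the $W_1'$ case of Theorem \ref{thmbmn}, with the roles of $W_1$ and $W_2$ swapped.

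\emph{Case $y=1$.} The conditions become $(\beta,\alpha_i)\in-\mathbb{Z}_{\ge0}$ for all $i$ and $u+(\beta,\theta)\in\mathbb{Z}_{>0}$. In addition, for the even simple roots, $(\beta,\alpha_i)$ lies in $\pm\frac{u}{2h^\vee}\mathbb{Z}_{\ge0}$ (with the $sp$ roots having sign $+$ and the $o$ roots sign $-$, up to factors of $2$ from root lengths). Since $\gcd(2u,2h^\vee)=1$, i.e.\ $(u,2n-2m+1)=1$, these combine to force $(\beta,\alpha_i)=0$ for the even roots. For the odd root, $(\beta,\theta)>-u$ together with $(\beta,\alpha_n)\in-\mathbb{Z}_{\ge0}$ and $(\beta,\alpha_n')=2(\beta,\alpha_n)+\cdots$ being a multiple of $\frac{2u}{2h^\vee}$ kills $(\beta,\alpha_n)$ as well. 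Hence $\beta=0$ and $\hat\lambda=\mathcal{k}\Lambda_0$.

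The main obstacle is the mixed case where both $y_1\ne1$ and $y_2\ne1$, together with the first case. There one must check that the chosen test root $\alpha$ can be taken in $Q_{1,+}$ with $\theta+y^{-1}\alpha\in Q_+$, and that the bound $(\rho,\alpha-y^{-1}\alpha)\ge h^\vee$ still holds despite the negative $\rho$-values on the $o_{2m+1}$ side. I would handle this by noting that $y_2$ fixes $Q_{1}$-components modulo $Q_2$, so only $y_1$ matters. I would also verify directly the half-integer bookkeeping $(\rho,\gamma)\in\frac12+\mathbb{Z}$ for long roots $\gamma$ of $sp_{2n}$, which is what makes the $\frac12$ slack in the first case close up.
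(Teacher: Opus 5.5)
Your proposal has a genuine gap in the two cases with $y_1\neq 1$. Both rely on bounds of the form $(\rho,\alpha-y^{-1}\alpha)\ge h^\vee$ (or $>h^\vee$), which is how Lemma~\ref{lemy1} and Lemma~\ref{lemy5} work. Those lemmas use that $\rho$ is dominant and regular for the even part, and for $m\ge 1$ it is not. Here $(\rho,\alpha_n')=\frac12-m<0$, and $(\rho,\theta)=n-m-\frac12$, not $n-m$. The positive long roots of $\Delta_1$ have $\rho$-values $n-m-j+\frac12$ ($j=1,\dots,n$), which are negative for $j>n-m$.

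\emph{Case A fails.} Take $m=n$ and $y=r_\theta$. Then $(\rho,\theta-y^{-1}\theta)=2(\rho,\theta)=-1<\frac12=h^\vee$.

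\emph{Case B fails outright.} For $osp(7|6)$ ($n=3$, $m=1$), the even root $\gamma=\alpha_2+\alpha_3'$ satisfies $(\rho,\gamma)=\frac12-\frac12=0$ and $(\theta,\gamma)=0$. So $y=r_\gamma\in W_1$ fixes both $\rho$ and $\theta$. Then $(\rho,\alpha-y^{-1}\alpha)=(\rho-y\rho,\alpha)=0$ for every $\alpha$, and no test root $k\theta+l\alpha_j$ can exceed $h^\vee$.

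Even where your Case A inequality holds, vanishing of the $sp_{2n}$ labels does not give $\hat\lambda=\mathcal{k}\Lambda_0$. Unlike for $osp(1|2n)$, $\mathfrak{h}^*$ here also has the $o_{2m+1}$ directions.

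The missing idea is arithmetic rather than an inequality. The paper splits $W_1$ into $W_1'=\langle r_{\alpha_1},\dots,r_{\alpha_{n-1}}\rangle$ and $W_1\setminus W_1'$.

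On $W_1'$, $(\rho,\alpha_i)=\frac12>0$ for $i\le n-1$ and $y^{-1}\theta\in\Delta_{1,+}$. So the Lemma~\ref{lemy3}-type construction does give $(\rho,\alpha-y^{-1}\alpha)>h^\vee$.

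For $y\in W_1\setminus W_1'$ (some sign change occurs), pick a positive long root $\alpha$ of $\Delta_1$ with $y^{-1}\alpha\in-\Delta^l_{1,+}$. As usual, $(\beta,\alpha)\in\mathbb{Z}_{>0}$ and $u-(\beta,\alpha)\in\mathbb{Z}_{\ge0}$. Also
$(\hat\lambda,\alpha)=\frac{2(n-m)+1}{2u}(\beta,\alpha)-(\rho,\alpha-y^{-1}\alpha)\in\mathbb{Z}_{\ge0}$,
where $(\rho,\alpha-y^{-1}\alpha)\in\mathbb{Z}$ because it is a sum of two half-integers. Hence $2u\mid(\beta,\alpha)$, since $(2u,2(n-m)+1)=1$. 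This contradicts $0<(\beta,\alpha)\le u$. This is exactly where the boundary coprimality condition is used essentially, and your proposal never invokes it outside the case $y=1$.

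Finally, the mixed case you flag as the main obstacle is not one. $W_1$ fixes $\Delta_2$ pointwise, so $y^{-1}\alpha_i=y_2^{-1}\alpha_i$ for $i>n$. Your $W_2$ argument therefore already disposes of every $y$ with $y_2\neq 1$, whatever $y_1$ is; this is the paper's case $y\notin W_1$.
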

\begin{proof}
Let $L(\hat{\lambda})$ be an irreducible ordinary $L_{\widehat{\mathfrak{g}}}(\mathcal{k},0)$-module,
from Lemma \ref{lemor} and  Lemma \ref{lemboundary}, we have $\hat{\lambda}=(t_\beta y).(\mathcal{k}\Lambda_0)$
satisfying if $y\theta\in -\Delta_+$, $u+(\beta,y\theta)\in\mathbb{Z}_{\geq0}$,
if $y\theta\in \Delta_+$, $u+(\beta,y\theta)\in\mathbb{Z}_{>0}$,
and for any $i=1,\cdots,n+m$,
if $y\alpha_i\in \Delta_+$, $-(\beta,y\alpha_i)\in\mathbb{Z}_{\geq0}$,
if $y\alpha_i\in -\Delta_+$, $-(\beta,y\alpha_i)\in\mathbb{Z}_{>0}$,
where $\beta\in \mathfrak{h}^*, y\in W$.
Then from Theorem 8 of \cite{Kac77}, we have
$(\hat{\lambda}, \alpha_i^\vee)\in \mathbb{Z}_{\geq0} ~(i=1,\cdots,n-1,n+1,\cdots,n+m)$ and $(\hat{\lambda}, {\alpha_n^\prime}^\vee)\in \mathbb{Z}_{\geq0}$,
i.e.,
$$\begin{cases}
  \frac{2(n-m)+1}{2u}(\beta,\alpha_i)+(\rho,y^{-1}\alpha_i-\alpha_i)\in \frac{1}{2}\mathbb{Z}_{\geq0},&i=1,\cdots,n-1,\\
 \frac{2(n-m)+1}{2u}(\beta,\alpha_n^\prime)+(\rho,y^{-1}\alpha_n^\prime-\alpha_n^\prime)\in \mathbb{Z}_{\geq0},\\
  \frac{2(n-m)+1}{2u}(\beta,\alpha_i)+(\rho,y^{-1}\alpha_i-\alpha_i)\in -\frac{1}{2}\mathbb{Z}_{\geq0},&i=n+1,\cdots,n+m-1,\\
 \frac{2(n-m)+1}{2u}(\beta,\alpha_{n+m})+(\rho,y^{-1}\alpha_{n+m}-\alpha_{n+m})\in -\frac{1}{4}\mathbb{Z}_{\geq0}.\\
\end{cases}$$

If $y\notin W_1$, then from Corollary 10.2 of \cite{Hum78}, there exist $\alpha_i ~(i\in\{n+1,\cdots,n+m\})$ satisfying $y^{-1}\alpha_i\in -\Delta_{2,+}$,
thus $(\beta,\alpha_i)=-(\beta,y(-y^{-1}\alpha_i))\in\mathbb{Z}_{\geq0}$ and
$-\frac{2(n-m)+1}{2u}(\beta,\alpha_i)-(\rho,y^{-1}\alpha_i-\alpha_i)\geq0$.
Since $y^{-1}\alpha_i-\alpha_i\in-Q_{2,+}$, we have $(\rho,y^{-1}\alpha_i-\alpha_i)>0$,
this is impossible.
If $1\ne y\in W_1^\prime$, where $W_1^\prime=\langle r_{\alpha_i}|i=1,\cdots,n-1\rangle$, it is clear that $y^{-1}\theta\in \Delta_{1,+}$.
Note that $\theta$ is a long root of $sp_{2n}$, $\Delta_{1}^l=\{ \pm2\sum_{i=k}^{n-1} \alpha_i+\alpha_n^\prime ~(k=1, \dots, n-1), \pm\alpha_n^\prime\}$
is the set of positive long roots of $\Delta_1$
and $(y\rho, \alpha_j)<0$ for certain $j=1,\cdots,n-1$,
similar to Lemma \ref{lemy3}, there exists $\alpha\in Q_{1,+}$ satisfying $\theta+y^{-1}\alpha\in Q_+$ and $(\rho,\alpha-y^{-1}\alpha)>(n-m)+\frac{1}{2}$, where $Q_{1,+}=\bigoplus_{i=1}^{n-1}\mathbb{Z}_{\geq0}\alpha_i\oplus\mathbb{Z}_{\geq0}\alpha_n^\prime$.
Then we have $u-(\beta,\alpha)=u+(\beta,y\theta)-(\beta,y(\theta+y^{-1}\alpha))\in\mathbb{Z}_{\geq0}$ and $\frac{2(n-m)+1}{2u}(\beta,\alpha)+(\rho,y^{-1}\alpha-\alpha)\geq0$, thus $(\beta,\alpha)\geq\frac{2u}{2(n-m)+1}(\rho,\alpha-y^{-1}\alpha)>u$,
this is also impossible.
If $y\in W_1\backslash W_1^\prime$, from the proof of Theorem \ref{thmbmn},
there exists \(\alpha \in \Delta_{1,+}^l\) satisfying \(y^{-1}\alpha \in -\Delta_{1,+}^l\),
then we have $(\beta, \alpha)\in \mathbb{Z}_{>0}$,
$u-(\beta,\alpha)\in\mathbb{Z}_{\geq0}$,
and $(\beta,\alpha)\in \frac{2u}{2(n-m)+1}((\rho,\alpha-y^{-1}\alpha)+\mathbb{Z}_{\geq0})$, note that $(2u,2(n-m)+1)=1$, this is also impossible.
If $y=1$, we have
$$\begin{cases}
u+(\beta,\theta)\in\mathbb{Z}_{>0},\\
 (\beta,\alpha_i)\in-\mathbb{Z}_{\geq0},&i=1,\cdots,n+m,\\
(\beta,\alpha_i)\in \frac{u}{2(n-m)+1}\mathbb{Z}_{\geq0},&i=1,\cdots,n-1,\\
(\beta,\alpha_n^\prime)\in  \frac{2u}{2(n-m)+1}\mathbb{Z}_{\geq0},\\
(\beta,\alpha_i)\in -\frac{u}{2(n-m)+1}\mathbb{Z}_{\geq0},&i=n+1,\cdots,n+m-1,\\
(\beta,\alpha_{n+m})\in -\frac{u}{4(n-m)+2}\mathbb{Z}_{\geq0},
\end{cases}$$
since $(2u,2(n-m)+1)=1$, we have $(\beta,\alpha_i)=0$ for $i=1,\cdots,n+m$, thus $\beta=0$, i.e., $\hat{\lambda}=\mathcal{k}\Lambda_0$.
This shows that $L_{\widehat{\mathfrak{g}}}(\mathcal{k},0)$ itself is the only irreducible ordinary $L_{\widehat{\mathfrak{g}}}(\mathcal{k},0)$-module.
\end{proof}
\subsubsection{Type $D(m,n)$}

Let $\mathfrak{g}$ be the Lie superalgebra of type $D(m,n)$,
that is, $\mathfrak{g}=osp(2m|2n)$, where $m>1, n>0$.
We fix the Dynkin diagram $$
\begin{tikzpicture}[
    baseline=-0.5ex,
    line width=0.4pt,
    root/.style={circle, draw, minimum size=10pt, inner sep=0pt},
    otimes/.style={circle, draw, minimum size=10pt, inner sep=0pt,
        path picture={
            \draw (-3.1pt,-3.1pt) -- (3.1pt, 3.1pt);
            \draw (-3.1pt, 3.1pt) -- (3.1pt,-3.1pt);
        }
    }
]
    \node[root, label=below:{$\scriptstyle \alpha_1$}]      (n1) at (0, 0) {};
    \node (dots1) at (0.8, 0) {$\cdots$};
    \node[root, label=below:{$\scriptstyle \alpha_{n-1}$}]  (n2) at (1.6, 0) {};
    \node[otimes, label=below:{$\scriptstyle \alpha_n$}]    (n3) at (2.4, 0) {};
    \node[root, label=below:{$\scriptstyle \alpha_{n+1}$}]  (n4) at (3.2, 0) {};
    \node (dots2) at (4.0, 0) {$\cdots$};
    \node[root, label=below:{$\scriptstyle \alpha_{n+m-2}$}] (n5) at (4.8, 0) {};
    \node[root, label=below:{$\scriptstyle \alpha_{n+m-1}$}] (n6) at (5.7, 0.4) {};
    \node[root, label=below:{$\scriptstyle \alpha_{n+m}$}]   (n7) at (5.7, -0.4) {};
    \draw (n1) -- (dots1) -- (n2) -- (n3) -- (n4) -- (dots2) -- (n5);
    \draw (n5) -- (n6);
    \draw (n5) -- (n7);
\end{tikzpicture}
$$
and $\Pi=\{\alpha_1,\cdots,\alpha_{n+m}\}$,
where $\alpha_1,\cdots,\alpha_{n-1},\alpha_{n+1},\cdots,\alpha_{n+m}$ are even and $\alpha_n$ is odd.
Note that $\theta=2\alpha_1+\cdots+2\alpha_{n+m-2}+\alpha_{n+m-1}+\alpha_{n+m}$.
Let $\alpha_n^\prime=2\alpha_n+\cdots+2\alpha_{n+m-2}+\alpha_{n+m-1}+\alpha_{n+m}$.
We have $W=W_1\times W_2$, where $W_1=\langle r_{\alpha_i}, r_{\alpha_n^\prime}\mid i=1,\cdots,n-1\rangle$ and $W_2=\langle r_{\alpha_i}|i=n+1,\cdots,n+m\rangle$.
We also have $\Delta^0=\Delta_1\cup\Delta_2$,
and we can identify $\Delta_1$ with the root system of $sp_{2n}$,
identify $W_1$ with the Weyl group of $sp_{2n}$, identify $\theta$ with the highest root of $sp_{2n}$,
identify $\Delta_2$ with the root system of $o_{2m}$,
identify $W_2$ with the Weyl group of $o_{2m}$ and identify
$\theta^\prime=\alpha_{n+1}+2\alpha_{n+2}+\cdots+2\alpha_{n+m-2}+\alpha_{n+m-1}+\alpha_{n+m}$ with the highest root of $o_{2m}$.
From Lemma \ref{boundary}, all boundary admissible levels of $\widehat{\mathfrak{g}}$ are principal.
In the following, we classify the irreducible ordinary modules for $L_{\widehat{\mathfrak{g}}}(\mathcal{k},0)$
by considering two cases, one is $m>n$, the other is $m\leq n$.

{\bf Case 1: $m>n$.}
Note that $h^{\vee}=2(m-n-1)$, $r^\vee=1$,
$(\rho, \alpha_i)=-1$ for $i=1,\cdots,n-1$, $(\rho, \alpha_n)=0$,
$(\rho, \alpha_i)=1$ for $i=n+1,\cdots,n+m$.
If $m=n+1$, $h^{\vee}=0$, by Lemma \ref{boundary}, $\widehat{\mathfrak{g}}$ does not admit boundary admissible levels,
thus we assume that $m>n+1$.
Thus the main result for type $D(m,n)~(m>n+1)$ is the following theorem.
\begin{thm}
Let $\mathfrak{g}=osp(2m|2n)~(m>n+1>1)$ and
$\mathcal{k}=\frac{2(m-n-1)}{u}-2(m-n-1)$ be a boundary admissible level of $\widehat{\mathfrak{g}}$.
Then $L_{\widehat{\mathfrak{g}}}(\mathcal{k},0)$ itself is the only irreducible ordinary $L_{\widehat{\mathfrak{g}}}(\mathcal{k},0)$-module.
\end{thm}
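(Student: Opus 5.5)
The proof will follow the pattern of Theorem \ref{thmbmn}. Let $L(\hat{\lambda})$ be an irreducible ordinary $L_{\widehat{\mathfrak{g}}}(\mathcal{k},0)$-module.

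\textbf{Setup.} By Lemma \ref{lemor} and Lemma \ref{lemboundary}, $\hat{\lambda}=(t_\beta y).(\mathcal{k}\Lambda_0)$. Here $y\in W=W_1\times W_2$ and $\beta\in\mathfrak{h}^*$ satisfy the positivity conditions coming from $(t_\beta y)\widehat{\Pi}_u\subset\widehat{\Delta}_+$.

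By Theorem 8 of \cite{Kac77}, finite-dimensionality of $L(\lambda)$ amounts to integrality conditions at $\alpha_1,\dots,\alpha_{n-1}$, at $\alpha_n'$, and at $\alpha_{n+1},\dots,\alpha_{n+m}$. With $h^\vee=2(m-n-1)$ and the given values of $(\rho,\alpha_i)$, these read:
\begin{itemize}
\item $\frac{h^\vee}{u}(\beta,\alpha_i)+(\rho,y^{-1}\alpha_i-\alpha_i)\in-\mathbb{Z}_{\geq0}$ for $i\le n-1$;
\item the analogous condition for $\alpha_n'$, lying in $-2\mathbb{Z}_{\geq0}$;
\item $\frac{h^\vee}{u}(\beta,\alpha_i)+(\rho,y^{-1}\alpha_i-\alpha_i)\in\mathbb{Z}_{\geq0}$ for $i\ge n+1$.
\end{itemize}
The signs flip on the $sp_{2n}$ side because $(\cdot,\cdot)$ is negative there when $m>n$. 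So the $o_{2m}$ part plays the role of the "positive" simple Lie algebra and the $sp_{2n}$ part is "negative", exactly as for $B(m,n)$ with $m>n$.

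\textbf{Case analysis on $y$.} There are four cases.

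(i) Suppose $y\notin W_1$, so its $W_2$-component is nontrivial. Imitating Lemma \ref{lemy3} inside $o_{2m}$, I produce $\alpha\in Q_{2,+}=\bigoplus_{i>n}\mathbb{Z}_{\ge0}\alpha_i$ with $\theta+y^{-1}\alpha\in Q_+$ and $(\rho,\alpha-y^{-1}\alpha)\ge h^\vee$. The witness is $\theta'$ or $k\theta'+l\alpha_j$, according to whether $y^{-1}\theta'$ is negative, positive and different from $\theta'$, or equal to $\theta'$. This works because $\theta-\theta'$ has positive coefficients covering the needed slack. If $y^{-1}\theta'$ is negative, then $(\rho,\theta'-y^{-1}\theta')\ge 2(\rho,\theta')=2(2m-3)>h^\vee$. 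The argument then gives $(\beta,\alpha)\ge u$, contradicting $u-(\beta,\alpha)\in\mathbb{Z}_{>0}$ (strict because $y\theta\in\Delta_+$ when $y\theta$ is positive, and otherwise the same inequality is made strict by a strict $\rho$-bound).

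(ii) Suppose $1\ne y\in W_1'=\langle r_{\alpha_i}\mid i\le n-1\rangle$. Pick $\alpha_i$ with $y^{-1}\alpha_i$ negative by Corollary 10.2 of \cite{Hum78}. Then $(\beta,\alpha_i)\in\mathbb{Z}_{\ge0}$, while $(\rho,y^{-1}\alpha_i-\alpha_i)>0$, because $\rho$ is negative on the $sp$ simple roots. This contradicts the nonpositivity condition.

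(iii) Suppose $y\in W_1\setminus W_1'$. As in Theorem \ref{thmbmn}, there is a long root $\alpha\in\Delta^l_{1,+}$ with $y^{-1}\alpha\in-\Delta^l_{1,+}$, and then $0<(\beta,\alpha)\le u$. The long-root condition gives $(\beta,\alpha)\in\frac{2u}{h^\vee}\bigl(\frac{(\rho,\alpha-y^{-1}\alpha)}{2}-\mathbb{Z}_{\ge0}\bigr)$. Here $h^\vee$ is even, so I will instead use $(u,h^\vee)=1$ and a parity/size argument: $\frac{h^\vee}{u}(\beta,\alpha)$ must be an integer, which forces $u\mid(\beta,\alpha)$. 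Combined with $0<(\beta,\alpha)\le u$, this gives $(\beta,\alpha)=u$. I expect that equality then contradicts the sign constraint via $(\rho,\alpha-y^{-1}\alpha)=2(\rho,\alpha)<0$.

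(iv) Suppose $y=1$. The conditions become $(\beta,\alpha_i)\in-\mathbb{Z}_{\ge0}$ together with $(\beta,\alpha_i)\in\pm\frac{u}{h^\vee}\mathbb{Z}_{\ge0}$. Since $(u,h^\vee)=1$, the only common value compatible with the signs is $0$ on the even simple roots. The odd root is handled through $\alpha_n'$ and the bound $u+(\beta,\theta)>0$. This gives $\beta=0$, hence $\hat{\lambda}=\mathcal{k}\Lambda_0$.

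\textbf{Main obstacle.} Step (iii) is the main obstacle. Unlike $B(m,n)$, here $h^\vee$ is even, so the parity trick must be replaced by the divisibility argument $(u,h^\vee)=1$ and a careful check of the sign of $(\rho,\alpha-y^{-1}\alpha)$ for long $sp_{2n}$ roots under the $D(m,n)$ normalisation.
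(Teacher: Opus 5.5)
Your four-way case split is the paper's, and steps (i) and (ii) go through essentially as there. Step (iii), however, does not close. You correctly get $0<(\beta,\alpha)\le u$ and $u\mid(\beta,\alpha)$, hence $(\beta,\alpha)=u$. But the contradiction you then expect from the long-root condition does not exist.

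In this normalisation the long positive roots of $\Delta_1$ are $\gamma_k=2\sum_{i=k}^{n-1}\alpha_i+\alpha_n'$. They satisfy $(\rho,\gamma_k)=-2(n-k)+(\rho,\alpha_n')=2(m-n+k-1)>0$, not $<0$. Moreover $y^{-1}\gamma_k=-\gamma_j$ for some $j$ that need not equal $k$. So $(\rho,\gamma_k-y^{-1}\gamma_k)=2(m-n+k-1)+2(m-n+j-1)$, not $2(\rho,\gamma_k)$. Substituting $(\beta,\gamma_k)=u$ gives $(\hat{\lambda},\gamma_k)=h^\vee-(\rho,\gamma_k-y^{-1}\gamma_k)=-2(m-n+j+k-1)\in-2\mathbb{Z}_{\geq0}$. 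The long-root condition is therefore satisfied; for instance, for $osp(6|2)$ and $y=r_\theta$ it equals $-6$. No sign or size argument at $\gamma_k$ alone can rule out $(\beta,\gamma_k)=u$.

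The missing ingredient is the arithmetic at the individual simple roots, which is how the paper argues.
\begin{itemize}
\item Since $y\in W_1$ fixes $\alpha_{n+1},\dots,\alpha_{n+m}$, the conditions at these roots give $(\beta,\alpha_i)\ge0$ and $(\beta,\alpha_i)\le0$. Hence $(\beta,\alpha_i)=0$ for $i\ge n+1$.
\item Write $\alpha_n'=2\alpha_n+R$ with $R$ in the span of $\alpha_{n+1},\dots,\alpha_{n+m}$. Then $(\beta,\alpha_n')=2(\beta,\alpha_n)$.
\item Since $h^\vee$ is even and $(u,h^\vee)=1$, $u$ is odd. Integrality of $\frac{h^\vee}{u}(\beta,\alpha_i)$ for $i\le n-1$ and of $\frac{h^\vee}{u}(\beta,\alpha_n')$ then gives $u\mid(\beta,\alpha_i)$ for all $i\le n$.
\item Therefore $(\beta,\gamma_k)=2\bigl(\sum_{i=k}^{n-1}(\beta,\alpha_i)+(\beta,\alpha_n)\bigr)\in 2u\mathbb{Z}$, which contradicts $0<(\beta,\gamma_k)\le u$.
\end{itemize}
Already the evenness of $(\beta,\gamma_k)$ against odd $u$ rules out $(\beta,\gamma_k)=u$. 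This is the ``parity'' argument you allude to but never carry out.

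Two smaller corrections.
\begin{itemize}
\item In (iv), signs alone do not kill $(\beta,\alpha_i)$ for $i\le n-1$. Both conditions there are nonpositive, so you only get $(\beta,\alpha_i)\in-u\mathbb{Z}_{\geq0}$, and similarly $(\beta,\alpha_n')\in-u\mathbb{Z}_{\geq0}$. It is the bound $u+(\beta,\theta)>0$, with $\theta=2(\alpha_1+\dots+\alpha_{n-1})+\alpha_n'$, that forces all of these to vanish.
\item In (i), the bound $(\rho,\theta'-y^{-1}\theta')\ge 2(\rho,\theta')$ is false in general. Only $(\rho,\theta'-y^{-1}\theta')\ge(\rho,\theta')+1=2m-2$ holds, but this still exceeds $h^\vee$, so that step survives.
\end{itemize}
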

\begin{proof}
Let $L(\hat{\lambda})$ be an irreducible ordinary $L_{\widehat{\mathfrak{g}}}(\mathcal{k},0)$-module,
from Lemma \ref{lemor} and Lemma \ref{lemboundary},
we obtain $\hat{\lambda}=(t_\beta y).(\mathcal{k}\Lambda_0)$
satisfying if $y\theta\in -\Delta_+$, $u+(\beta,y\theta)\in\mathbb{Z}_{\geq0}$,
if $y\theta\in \Delta_+$, $u+(\beta,y\theta)\in\mathbb{Z}_{>0}$,
and for any $i=1,\cdots,n+m$,
if $y\alpha_i\in \Delta_+$, $-(\beta,y\alpha_i)\in\mathbb{Z}_{\geq0}$,
if $y\alpha_i\in -\Delta_+$, $-(\beta,y\alpha_i)\in\mathbb{Z}_{>0}$,
where $\beta\in \mathfrak{h}^*, y\in W$.
Then from Theorem 8 of \cite{Kac77}, we have
$$\begin{cases}
  \frac{2(m-n-1)}{u}(\beta,\alpha_i)+(\rho,y^{-1}\alpha_i-\alpha_i)\in -\mathbb{Z}_{\geq0},&i=1,\cdots,n-1,\\
 \frac{2(m-n-1)}{u}(\beta,\alpha_n^\prime)+(\rho,y^{-1}\alpha_n^\prime-\alpha_n^\prime)\in -2\mathbb{Z}_{\geq0},\\
  \frac{2(m-n-1)}{u}(\beta,\alpha_i)+(\rho,y^{-1}\alpha_i-\alpha_i)\in \mathbb{Z}_{\geq0},&i=n+1,\cdots,n+m.
\end{cases}$$

If $y\notin W_1$ or $1\ne y\in W_1^\prime$,
where $W_1^\prime=\langle r_{\alpha_i}|i=1,\cdots,n-1\rangle$,
similar to the proof of Theorem \ref{thmbmn},
this is impossible;
if $y\in W_1\backslash W_1^\prime$,
note that $(u, 2(m-n-1))=1$, we have $(\beta,\alpha_i)\in \mathbb{Z}u$ for any $i=1,\cdots,n$
and $(\beta,\alpha_i)=0$ for any $i=n+1,\cdots,n+m$,
thus for any \(\alpha \in \Delta_{1,+}^l\), we have $(\beta,\alpha)\in 2u\mathbb{Z}$,
this is also impossible;
if $y=1$, then we have $\beta=0$, i.e., $\hat{\lambda}=\mathcal{k}\Lambda_0$.
This shows that $L_{\widehat{\mathfrak{g}}}(\mathcal{k},0)$ itself is the only irreducible ordinary $L_{\widehat{\mathfrak{g}}}(\mathcal{k},0)$-module.
\end{proof}

{\bf Case 2: $m\leq n$.}
Note that $h^{\vee}=n-m+1$, $r^\vee=2$,
$(\rho, \alpha_i)=\frac{1}{2}$ for $i=1,\cdots,n-1$, $(\rho, \alpha_n)=0$,
$(\rho, \alpha_i)=-\frac{1}{2}$ for $i=n+1,\cdots,n+m$.
Thus the main result for type $D(m,n)~(m\leq n)$ is the following theorem.
\begin{thm}
Let $\mathfrak{g}=osp(2m|2n)~(1<m\leq n)$ and
$\mathcal{k}=\frac{n-m+1}{u}-(n-m+1)$ be a boundary admissible level of $\widehat{\mathfrak{g}}$.
Then $L_{\widehat{\mathfrak{g}}}(\mathcal{k},0)$ itself is the only irreducible ordinary $L_{\widehat{\mathfrak{g}}}(\mathcal{k},0)$-module.
\end{thm}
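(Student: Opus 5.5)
I will follow the same scheme as in the proofs of Theorem \ref{thmbmn} and Theorem \ref{thmbmn1}, now with $h^\vee=n-m+1$ and $\mathcal{k}=\frac{n-m+1}{u}-(n-m+1)$. Let $L(\hat{\lambda})$ be an irreducible ordinary $L_{\widehat{\mathfrak{g}}}(\mathcal{k},0)$-module. By Lemma \ref{lemor} and Lemma \ref{lemboundary}, $\hat{\lambda}=(t_\beta y).(\mathcal{k}\Lambda_0)$ with $\beta\in\mathfrak{h}^*$ and $y\in W=W_1\times W_2$ satisfying the positivity conditions listed after Lemma \ref{lemboundary}. By Theorem 8 of \cite{Kac77}, finite-dimensionality of $L(\lambda)$ yields the conditions
$$\begin{cases}
  \frac{n-m+1}{u}(\beta,\alpha_i)+(\rho,y^{-1}\alpha_i-\alpha_i)\in \frac{1}{2}\mathbb{Z}_{\geq0},&i=1,\cdots,n-1,\\
 \frac{n-m+1}{u}(\beta,\alpha_n^\prime)+(\rho,y^{-1}\alpha_n^\prime-\alpha_n^\prime)\in \mathbb{Z}_{\geq0},\\
  \frac{n-m+1}{u}(\beta,\alpha_i)+(\rho,y^{-1}\alpha_i-\alpha_i)\in -\frac{1}{2}\mathbb{Z}_{\geq0},&i=n+1,\cdots,n+m.
\end{cases}$$
Here the sign in the last line comes from $(\alpha_i,\alpha_i)=-1$ for the $o_{2m}$ roots, and $(\alpha_n',\alpha_n')=2$. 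The case analysis on $y$ then mirrors Theorem \ref{thmbmn1}.

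\textbf{Case $y\notin W_1$.} The $W_2$-component is nontrivial, so by Corollary 10.2 of \cite{Hum78} some $\alpha_i$ with $i\ge n+1$ has $y^{-1}\alpha_i\in-\Delta_{2,+}$. Then $(\beta,\alpha_i)\in\mathbb{Z}_{\geq0}$, since $-(\beta,y(-y^{-1}\alpha_i))$ is a nonnegative sum of the terms $-(\beta,y\alpha_j)$, at least one of which is strictly positive. Moreover $(\rho,y^{-1}\alpha_i-\alpha_i)>0$, because $(\rho,\cdot)$ is negative on $Q_{2,+}\setminus\{0\}$. Together these contradict the third condition.

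\textbf{Case $1\ne y\in W_1'$.} Here $y^{-1}\theta\in\Delta_{1,+}$, and I will build $\alpha\in Q_{1,+}$ as in Lemma \ref{lemy3} with $\theta+y^{-1}\alpha\in Q_+$ and $(\rho,\alpha-y^{-1}\alpha)>n-m+1$. Then $u-(\beta,\alpha)\in\mathbb{Z}_{\geq0}$, while the $W_1$-conditions force $(\beta,\alpha)\geq\frac{u}{n-m+1}(\rho,\alpha-y^{-1}\alpha)>u$, which is a contradiction.

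\textbf{Case $y\in W_1\setminus W_1'$.} Pick a long root $\alpha\in\Delta^l_{1,+}$ with $y^{-1}\alpha\in-\Delta^l_{1,+}$, as in the proof of Theorem \ref{thmbmn}. This gives $0<(\beta,\alpha)\le u$. On the other hand, the integrality for the long coroot of $sp_{2n}$ gives $(\beta,\alpha)\in\frac{u}{n-m+1}\big((\rho,\alpha-y^{-1}\alpha)+\mathbb{Z}_{\geq0}\big)$. Since $(u,h^\vee)=1$ (and $(u,2)=1$ from $r^\vee=2$), this forces $(\beta,\alpha)\in u\mathbb{Z}$, hence $(\beta,\alpha)=u$.

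This case is the main obstacle. Unlike the odd case $B(m,n)$, $h^\vee$ may be even here, so coprimality alone does not finish it. I will try the following. From $(\beta,\alpha)=u$ we get equality throughout, so $(\rho,\alpha-y^{-1}\alpha)=n-m+1$ exactly. I then need to show that $(\rho,\alpha-y^{-1}\alpha)=2(\rho,\alpha)+\ldots\ge(\rho,\alpha)+(\rho,\theta)$-type bounds exceed $h^\vee$, or, failing that, to propagate the equality to $u+(\beta,y\theta)=0$ or a similar value, contradicting the strict positivity $u+(\beta,y\theta)\in\mathbb{Z}_{>0}$ (note $y\theta$ may be negative, so I must check which positivity applies). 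If this does not work directly, I will instead use the fact that all $(\beta,\alpha_i)$ lie in $u\mathbb{Z}$ or vanish, so that $(\beta,\alpha)\in2u\mathbb{Z}$ on long roots, as in the $D(m,n)$, $m>n$ case, and contradict $0<(\beta,\alpha)\le u$.

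\textbf{Case $y=1$.} The conditions reduce to $(\beta,\alpha_i)\in-\mathbb{Z}_{\ge0}\cap\frac{u}{2(n-m+1)}\mathbb{Z}_{\ge0}=\{0\}$ for $i\le n-1$, similarly $(\beta,\alpha_n')=0$, and $(\beta,\alpha_i)\in-\mathbb{Z}_{\geq0}\cap-\frac{u}{2(n-m+1)}\mathbb{Z}_{\geq0}$ for $i\ge n+1$. The last intersection is bounded using $u+(\beta,\theta)>0$, giving $(\beta,\alpha_i)=0$. Hence $\beta=0$ and $\hat{\lambda}=\mathcal{k}\Lambda_0$.
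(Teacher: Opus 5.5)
Your cases $y\notin W_1$ and $1\neq y\in W_1'$ match the paper. In the case $y\in W_1\setminus W_1'$, your reduction to $(\beta,\alpha)=u$ is correct, and when $y\theta\in\Delta_+$ the strict condition $u+(\beta,y\theta)\in\mathbb{Z}_{>0}$ gives $(\beta,\alpha)<u$, which finishes that sub-case exactly as the paper does.

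The genuine gap is the sub-case $y\in W_1\setminus W_1'$ with $y\theta\in-\Delta_+$. There only $u+(\beta,y\theta)\geq 0$ is available, and none of your three fallbacks works.
\begin{itemize}
\item ``Equality throughout'' is not implied. From $(\beta,\alpha)=u$ you only get $(\rho,\alpha-y^{-1}\alpha)\le n-m+1$, and this quantity can even be negative, since $(\rho,\alpha_n')=-(m-1)$.
\item The value $u+(\beta,y\theta)=0$ is permitted, so there is nothing to contradict.
\item The $2u\mathbb{Z}$ argument from the case $m>n$ relied on $(\beta,\alpha_i)=0$ for $i\ge n+1$. That came from the Kac condition and the positivity condition having opposite signs. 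For $m\le n$ both conditions say $(\beta,\alpha_i)\le 0$, so $(\beta,\alpha_{n+m-1})+(\beta,\alpha_{n+m})$ can be an odd multiple of $u$, and you only get $(\beta,\alpha_n')\in u\mathbb{Z}$.
\end{itemize}
A concrete instance shows the obstruction is real. Take $osp(4|4)$ ($n=m=2$), $y=r_{\alpha_1+\alpha_2'}$, and $\beta$ with $(\beta,\alpha_1)=(\beta,\alpha_4)=0$, $(\beta,\alpha_2)=u$, $(\beta,\alpha_3)=-u$. Then $y\theta=-\alpha_2'$ and $u+(\beta,y\theta)=0$. All four Kac conditions you list hold, and $(\beta,\theta)=(\beta,\alpha_2')=u$. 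Every positivity condition holds except one: $y\alpha_2=-(\alpha_1+\alpha_2+\alpha_3+\alpha_4)\in-\Delta_+$, yet $-(\beta,y\alpha_2)=0$. So any correct argument must use the strict positivity at the odd simple root $\alpha_n$ together with $u$ odd, and your sketch never does.

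The paper supplies this as follows.
\begin{itemize}
\item Put $\beta'=-y^{-1}\beta$. Then $(\beta',\alpha_i)\in u\mathbb{Z}_{\geq0}$ for $i\neq n$ and $(\beta',\alpha_n')\in u\mathbb{Z}_{\geq0}$.
\item Since $\theta=2(\alpha_1+\cdots+\alpha_{n-1})+\alpha_n'$ and $(\beta',\theta)=-(\beta,y\theta)\le u$, we get $(\beta',\theta)\in\{0,u\}$.
\item The value $0$ contradicts $y\neq 1$: some $\alpha_i$ with $i<n$, or $\alpha_n'$, is sent to a negative root, which makes the corresponding pairing with $\beta'$ positive.
\item The value $u$ forces $(\beta',\alpha_i)=0$ for $i<n$ and $(\beta',\alpha_n')=u$. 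Hence $y\alpha_i\in\Delta_+$ for $i<n$, so $y\alpha_n'\in-\Delta_+$.
\item Write $\alpha_n'=2\alpha_n+R$ with $yR=R$. Then $y\alpha_n=\frac12(y\alpha_n'-R)\in-\Delta_+$, so $(\beta',\alpha_n)$ is a positive integer.
\item But $2(\beta',\alpha_n)=u-(\beta',R)$ with $(\beta',R)\in u\mathbb{Z}_{\geq0}$ forces $(\beta',\alpha_n)=u/2$, which is impossible since $u$ is odd.
\end{itemize}

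A smaller point concerns your case $y=1$. There $(\beta,\theta)=0$ automatically, so $u+(\beta,\theta)>0$ gives nothing. The vanishing of $(\beta,\alpha_i)$ for $i\ge n+1$ comes instead from $0=(\beta,\alpha_n')=2(\beta,\alpha_n)+(\beta,R)$, where every summand is nonpositive.
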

\begin{proof}
Let $L(\hat{\lambda})$ be an irreducible ordinary $L_{\widehat{\mathfrak{g}}}(\mathcal{k},0)$-module,
from Lemma \ref{lemor} and Lemma \ref{lemboundary},
we obtain $\hat{\lambda}=(t_\beta y).(\mathcal{k}\Lambda_0)$
satisfying if $y\theta\in -\Delta_+$, $u+(\beta,y\theta)\in\mathbb{Z}_{\geq0}$,
if $y\theta\in \Delta_+$, $u+(\beta,y\theta)\in\mathbb{Z}_{>0}$,
and for any $i=1,\cdots,n+m$,
if $y\alpha_i\in \Delta_+$, $-(\beta,y\alpha_i)\in\mathbb{Z}_{\geq0}$,
if $y\alpha_i\in -\Delta_+$, $-(\beta,y\alpha_i)\in\mathbb{Z}_{>0}$,
where $\beta\in \mathfrak{h}^*, y\in W$.
Then from Theorem 8 of \cite{Kac77}, we have
$$\begin{cases}
  \frac{n-m+1}{u}(\beta,\alpha_i)+(\rho,y^{-1}\alpha_i-\alpha_i)\in \frac{1}{2}\mathbb{Z}_{\geq0},&i=1,\cdots,n-1,\\
 \frac{n-m+1}{u}(\beta,\alpha_n^\prime)+(\rho,y^{-1}\alpha_n^\prime-\alpha_n^\prime)\in \mathbb{Z}_{\geq0},\\
  \frac{n-m+1}{u}(\beta,\alpha_i)+(\rho,y^{-1}\alpha_i-\alpha_i)\in -\frac{1}{2}\mathbb{Z}_{\geq0},&i=n+1,\cdots,n+m.
\end{cases}$$

Similar to the proof of Theorem \ref{thmbmn1},
if $y\notin W_1$ or $1\ne y\in W_1^\prime$, where $W_1^\prime=\langle r_{\alpha_i}|i=1,\cdots,n-1\rangle$,
 this is impossible;
if $y=1$, we have $\beta=0$, i.e., $\hat{\lambda}=\mathcal{k}\Lambda_0$.
For $y\in W_1\backslash W_1^\prime$,
suppose $y\theta\in \Delta_+$, from the proof of Theorem \ref{thmbmn},
there exists \(\alpha \in \Delta_{1,+}^l\) satisfying \(y^{-1}\alpha \in -\Delta_{1,+}^l\), then we have $(\beta, \alpha)\in \mathbb{Z}_{>0}$,
$u-(\beta,\alpha)\in\mathbb{Z}_{>0}$,
and $(\beta,\alpha)\in \frac{u}{n-m+1}((\rho,\alpha-y^{-1}\alpha)+\mathbb{Z}_{\geq0})$. Note that $(u,n-m+1)=1$, thus this is impossible,
so we have $y\theta\in -\Delta_+$. 
Since $(u,2)=1$, we have $(\beta,\alpha_i)\in \mathbb{Z}u$ for any $i\in\{1,\cdots,n-1,n+1,\cdots,n+m\}$ and $(\beta,\alpha_n^\prime)\in \mathbb{Z}u$.
Let $\beta^\prime=-y^{-1}\beta$, then $(\beta^\prime, \alpha_i)=-(\beta,y\alpha_i)\in\mathbb{Z}_{\geq0}u$ for any $i\in\{1,\cdots,n-1,n+1,\cdots,n+m\}$
and $(\beta^\prime, \alpha_n^\prime)\in\mathbb{Z}_{\geq0}u$.
Since $(\beta^\prime, \theta)=-(\beta,y\theta)\leq u$,
we have $(\beta^\prime, \theta)=0$ or $(\beta^\prime, \theta)=u$.
Suppose $(\beta^\prime, \theta)=0$, then $(\beta^\prime, \alpha_i)=0$ for any $i\in\{1,\cdots,n-1\}$
and $(\beta^\prime, \alpha_n^\prime)=0$, but
note that there exists $\alpha\in\{\alpha_i,i=1,\cdots,n-1,\alpha_n^\prime\}$
satisfying $y\alpha\in -\Delta_+$, i.e., $-(\beta,y\alpha)\in\mathbb{Z}_{>0}$, this is impossible.
Thus $(\beta^\prime, \theta)=u$,
then $(\beta^\prime, \alpha_i)=0$ for any $i\in\{1,\cdots,n-1\}$
and $(\beta^\prime, \alpha_n^\prime)=u$.
Then we have $-(\beta,y\alpha_i)=0$ for any $i=1,\cdots,n-1$,
i.e., $y\alpha_i\in \Delta_+$ for any $i=1,\cdots,n-1$,
it implies $y\alpha_n^\prime\in -\Delta_+$.
Let $\alpha_n^\prime=2\alpha_n+R$, where $R=2\alpha_{n+1}+\cdots+2\alpha_{n+m-2}+\alpha_{n+m-1}+\alpha_{n+m}$,
then $y\alpha_n=\frac{1}{2}(y\alpha_n^\prime-yR)=\frac{1}{2}(y\alpha_n^\prime-R)\in -\Delta_+$,
thus we have $-(\beta,y\alpha_n)\in\mathbb{Z}_{>0}$, i.e., $(\beta^\prime, \alpha_n)\in\mathbb{Z}_{>0}$,
but $(\beta^\prime, \alpha_n)=\frac{1}{2}((\beta^\prime,\alpha_n^\prime)-(\beta^\prime,R))\in\frac{u}{2}(1-\mathbb{Z}_{\geq0})$ and $(u,2)=1$,
 this is also impossible.
 This implies that $L_{\widehat{\mathfrak{g}}}(\mathcal{k},0)$ itself is the only irreducible ordinary $L_{\widehat{\mathfrak{g}}}(\mathcal{k},0)$-module.
\end{proof}

\subsubsection{Type $F(4)$}
Let $\mathfrak{g}$ be the Lie superalgebra of type $F(4)$. We fix the Dynkin diagram
$$
\begin{tikzpicture}[
    baseline=-0.5ex,
    line width=0.4pt,
    root/.style={circle, draw, minimum size=10pt, inner sep=0pt},
    otimes/.style={circle, draw, minimum size=10pt, inner sep=0pt,
        path picture={
            \draw (-3.1pt,-3.1pt) -- (3.1pt, 3.1pt);
            \draw (-3.1pt, 3.1pt) -- (3.1pt,-3.1pt);}},
   blackroot/.style={circle, draw, fill=black, minimum size=10pt, inner sep=0pt}]
    \node[ otimes, label=below:{$\scriptstyle \alpha_1$}]    (n1) at (0, 0) {};
     \node[root, label=below:{$\scriptstyle \alpha_{2}$}]    (n2) at (0.8, 0) {};
    \node[root, label=below:{$\scriptstyle \alpha_{3}$}] (n3) at (1.8, 0) {};
      \node[root, label=below:{$\scriptstyle \alpha_{4}$}]    (n4) at (2.6, 0) {};
    \draw (n1) -- (n2);  \draw(n3)--(n4);
    \draw[double, double distance=1.5pt] (n2) -- (n3) node[midway] {$<$};
\end{tikzpicture}
$$
and $\Pi=\{\alpha_1,\alpha_2,\alpha_3,\alpha_{4}\}$,
where $\alpha_2,\alpha_3,\alpha_{4}$ are even and $\alpha_1$ is odd.
Then the corresponding Cartan matrix is
$$A=\begin{pmatrix}
 0 & -\frac{1}{2} &0& 0\\
 -1 & 2 &-2& 0\\
 0 & -1 &2& -1\\
 0 & 0 &-1& 2
\end{pmatrix}
,$$
$\theta=2\alpha_1+3\alpha_2+2\alpha_{3}+\alpha_4$, and
$W=W_1\times W_2$, where $W_1=\langle r_{\theta}\rangle, W_2=\langle r_{\alpha_i}(i=2,3,4)\rangle$.
Note that $h^{\vee}=3$, $r^\vee=2$ and $(\rho, \alpha_1)=0,(\rho, \alpha_2)=\frac{1}{2}, (\rho, \alpha_3)=(\rho, \alpha_4)=1$.
We can view $W_1$ as the Weyl group of $sl_2$ and view $W_2$ as the Weyl group of $o_7$.
From Lemma \ref{boundary}, all boundary admissible levels of $\widehat{\mathfrak{g}}$ are principal, thus the main result for type $F(4)$ is the following theorem.

\begin{thm}\label{thmf4}
Let $\mathfrak{g}$ be the Lie superalgebra of type $F(4)$ and
$\mathcal{k}=\frac{3}{u}-3$ a boundary admissible level of $\widehat{\mathfrak{g}}$.
Then $L_{\widehat{\mathfrak{g}}}(\mathcal{k},0)$ itself is the only irreducible ordinary $L_{\widehat{\mathfrak{g}}}(\mathcal{k},0)$-module.
\end{thm}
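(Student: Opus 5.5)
The argument follows the template of the preceding type II theorems. Let $L(\hat{\lambda})$ be an irreducible ordinary $L_{\widehat{\mathfrak{g}}}(\mathcal{k},0)$-module. By Lemma \ref{lemor} and Lemma \ref{lemboundary} we may write $\hat{\lambda}=(t_\beta y).(\mathcal{k}\Lambda_0)$ with $\beta\in\mathfrak{h}^*$ and $y\in W=W_1\times W_2$ subject to the positivity conditions listed after Lemma \ref{lemboundary}. Finite-dimensionality of $L(\lambda)$, via Theorem 8 of \cite{Kac77}, gives dominance conditions on the even simple roots of $\mathfrak{g}_{\bar 0}=sl_2\oplus o_7$, namely $\alpha_2,\alpha_3,\alpha_4$ for $o_7$ and $\theta$ (a root of the $sl_2$ factor) for $W_1$. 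Explicitly,
$$\frac{3}{u}(\beta,\gamma)+(\rho,y^{-1}\gamma-\gamma)\in\frac{(\gamma,\gamma)}{2}\mathbb{Z}_{\geq0},\qquad \gamma\in\{\alpha_2,\alpha_3,\alpha_4\},$$
together with the analogous condition for $\theta$. Here the sign convention for $\theta$ must be checked: since $(\rho,\alpha_1)=0$ and $h^\vee=(\rho,\theta)+\frac12(\theta,\theta)=3$ with $(\theta,\theta)$ possibly negative, I first compute $(\theta,\theta)$ and $(\rho,\theta)$ explicitly from the Cartan matrix. I expect $\theta$ to be the $sl_2$ root with negative square length, so that its dominance condition has sign $-\mathbb{Z}_{\geq0}$, as for the $sp_{2n}$ part in Case 1 of $B(m,n)$.

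Write $y=y_1y_2$ with $y_1\in W_1=\{1,r_\theta\}$ and $y_2\in W_2$, and split into cases.

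\textbf{Case $y_2\neq 1$.} By Corollary 10.2 of \cite{Hum78} there is $\alpha_i$ with $i\in\{2,3,4\}$ and $y^{-1}\alpha_i\in-\Delta_+$. Then $(\beta,\alpha_i)\in\mathbb{Z}_{\geq0}$, while $(\rho,\alpha_i-y^{-1}\alpha_i)>0$. Alternatively, as in Lemma \ref{lemy3}, I construct $\alpha\in Q_{2,+}$ with $\theta+y^{-1}\alpha\in Q_+$ and $(\rho,\alpha-y^{-1}\alpha)\geq 3$ (or $>3$), using $k\theta'+m\alpha_j$ for $\theta'$ the highest root of $o_7$. This forces $(\beta,\alpha)\geq u$, contradicting $u-(\beta,\alpha)\in\mathbb{Z}_{>0}$. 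Which of the two arguments applies depends on the sign of the $o_7$ dominance condition: it is $+\mathbb{Z}_{\geq0}$, so the Lemma \ref{lemy3}-type argument is the right one.

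\textbf{Case $y=r_\theta$.} Here $y\theta=-\theta\in-\Delta_+$, so $u-(\beta,\theta)\in\mathbb{Z}_{\geq0}$, and since $y$ fixes $\alpha_2,\alpha_3,\alpha_4$ they impose $(\beta,\alpha_i)\in-\mathbb{Z}_{\geq0}$. I then combine the $\theta$-dominance condition $\frac{3}{u}(\beta,\theta)-2(\rho,\theta)\in\frac{(\theta,\theta)}{2}\mathbb{Z}$ with the integrality $(\beta,\theta)\in\mathbb{Z}$ and $(u,6)=1$ (from $(u,h^\vee)=(u,r^\vee)=1$) to derive a contradiction by divisibility, as in the $y\in W_1\setminus W_1'$ cases of Theorems \ref{thmbmn} and \ref{thmbmn1}. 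I also need the condition on $\alpha_1$: $r_\theta\alpha_1=\alpha_1-\langle\alpha_1,\theta^\vee\rangle\theta$ is presumably negative, which gives $(\beta,y\alpha_1)<0$ and a further constraint.

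\textbf{Case $y=1$.} The conditions $(\beta,\alpha_i)\in-\mathbb{Z}_{\geq0}$ combined with $\frac{3}{u}(\beta,\alpha_i)\in\frac{(\alpha_i,\alpha_i)}{2}\mathbb{Z}_{\geq0}$ force $(\beta,\alpha_i)=0$ for $i=2,3,4$. For $\theta$, $(\beta,\theta)\in\frac{u}{3}\cdot\frac{(\theta,\theta)}{2}\cdot(\mp\mathbb{Z}_{\geq0})$ together with $u+(\beta,\theta)>0$ and coprimality forces $(\beta,\theta)=0$. Hence $(\beta,\alpha_1)=0$ and $\beta=0$, so $\hat{\lambda}=\mathcal{k}\Lambda_0$.

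The main obstacle I anticipate is the case $y=r_\theta$, where sign and length bookkeeping for the negative-norm root $\theta$ must exactly produce a coprimality contradiction with $(u,6)=1$. It is also the only place where the exceptional structure of $F(4)$ enters; if it fails I will fall back to explicitly solving for $\beta$ in coordinates as in the $C(n+1)$ proof.
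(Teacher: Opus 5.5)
\textbf{Overall.} Your plan matches the paper's proof. It uses the same representation $\hat{\lambda}=(t_\beta y).(\mathcal{k}\Lambda_0)$ and the same three cases: $y\notin W_1$ via a Lemma~\ref{lemy3}-type vector in $Q_{2,+}$, then $y=r_\theta$, then $y=1$. Your $y=1$ argument is the paper's. Your sign expectation is also correct. From the Cartan matrix, $(\alpha_1,\theta)=-\frac{3}{2}$, $(\alpha_i,\theta)=0$ for $i=2,3,4$, $(\theta,\theta)=-3$ and $(\rho,\theta)=\frac{9}{2}$. So the $\theta$-condition reads $\frac{3}{u}(\beta,\theta)+(\rho,y^{-1}\theta-\theta)\in-\frac{3}{2}\mathbb{Z}_{\geq0}$.

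\textbf{The gap: the case $y=r_\theta$.} This step is not yet a proof, and the divisibility you describe does not close it. The $\theta$-condition gives $(\beta,\theta)\in 3u-\frac{u}{2}\mathbb{Z}_{\geq0}$. Since $(\beta,\theta)\in\mathbb{Z}$ and $u$ is odd, this means $(\beta,\theta)\in u\mathbb{Z}$. Together with $u-(\beta,\theta)\in\mathbb{Z}_{\geq0}$, this still allows $(\beta,\theta)=u$, so there is no contradiction yet.

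The missing idea is a parity argument, and it needs the vanishing of $(\beta,\alpha_2),(\beta,\alpha_3),(\beta,\alpha_4)$:
\begin{itemize}
\item Since $r_\theta$ fixes $\alpha_2,\alpha_3,\alpha_4$, you have $-(\beta,\alpha_i)\in\mathbb{Z}_{\geq0}$ and, from dominance, $\frac{3}{u}(\beta,\alpha_i)\geq 0$. Hence $(\beta,\alpha_i)=0$ for $i=2,3,4$.
\item Next, $r_\theta\alpha_1=\alpha_1-\theta=-(\alpha_1+3\alpha_2+2\alpha_3+\alpha_4)\in-\Delta_+$. So $(\beta,\alpha_1)=(\beta,\theta-\alpha_1)\in\mathbb{Z}_{>0}$.
\item Therefore $(\beta,\theta)=2(\beta,\alpha_1)$ is a positive even integer. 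It is also a multiple of the odd number $u$, so it is at least $2u$. This contradicts $(\beta,\theta)\leq u$.
\end{itemize}
Only $(u,2)=1$ is needed here.

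\textbf{A smaller point in the case $y_2\neq 1$.} When $y_1=r_\theta$ you have $y\theta=-\theta$. Then only $u-(\beta,\alpha)\in\mathbb{Z}_{\geq0}$ is available, not $\mathbb{Z}_{>0}$. So you need the strict bound $(\rho,\alpha-y^{-1}\alpha)>3$, not $\geq 3$.

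The construction does give this. Put $\theta'=2\alpha_2+2\alpha_3+\alpha_4$. If $y^{-1}\theta'\in-\Delta_+$, then $(\rho,\theta'-y^{-1}\theta')\geq 5$. In the other subcases, take the multipliers large enough.

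You should also verify $\theta+y^{-1}\alpha\in Q_+$. It holds because $r_\theta$ fixes $Q_{2,+}$ pointwise and $\theta-\theta'=2\alpha_1+\alpha_2$.
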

\begin{proof}
Let $L(\hat{\lambda})$ be an irreducible ordinary $L_{\widehat{\mathfrak{g}}}(\mathcal{k},0)$-module,
from Lemma \ref{lemor} and Lemma \ref{lemboundary}, we have $\hat{\lambda}=(t_\beta y).(\mathcal{k}\Lambda_0)$
satisfying if $y\theta\in -\Delta_+$, $u+(\beta,y\theta)\in\mathbb{Z}_{\geq0}$,
if $y\theta\in \Delta_+$, $u+(\beta,y\theta)\in\mathbb{Z}_{>0}$,
and for any $i=1,\cdots,4$,
if $y\alpha_i\in \Delta_+$, $-(\beta,y\alpha_i)\in\mathbb{Z}_{\geq0}$,
if $y\alpha_i\in -\Delta_+$, $-(\beta,y\alpha_i)\in\mathbb{Z}_{>0}$,
where $\beta\in \mathfrak{h}^*, y\in W$.
Then from Theorem 8 of \cite{Kac77}, we have
$(\hat{\lambda}, \alpha_i^\vee)\in \mathbb{Z}_{\geq0} ~(i=2,3,4)$ and $(\hat{\lambda}, \theta^\vee)\in \mathbb{Z}_{\geq0}$,
i.e.,
$$\begin{cases}
 \frac{3}{u}(\beta,\alpha_2)+(\rho,y^{-1}\alpha_2-\alpha_2)\in \frac{1}{2}\mathbb{Z}_{\geq0},\\
  \frac{3}{u}(\beta,\alpha_i)+(\rho,y^{-1}\alpha_i-\alpha_i)\in \mathbb{Z}_{\geq0}, &i=3,4,\\
 \frac{3}{u}(\beta,\theta)+(\rho,y^{-1}\theta-\theta)\in -\frac{3}{2}\mathbb{Z}_{\geq0}.
\end{cases}$$

If $y\notin W_1$, similar to Lemma \ref{lemy3}, there exists $\alpha\in Q_{2,+}$ satisfying $\theta+y^{-1}\alpha\in Q_+$ and $(\rho,\alpha-y^{-1}\alpha)>3$, where $Q_{2,+}=\bigoplus_{i=2}^{4}\mathbb{Z}_{\geq0}\alpha_i$.
Then we have $u-(\beta,\alpha)=u+(\beta,y\theta)-(\beta,y(\theta+y^{-1}\alpha))\in\mathbb{Z}_{\geq0}$ and $\frac{3}{u}(\beta,\alpha)+(\rho,y^{-1}\alpha-\alpha)\geq0$, thus $(\beta,\alpha)\geq\frac{u}{3}(\rho,\alpha-y^{-1}\alpha)>u$,
this is impossible.
If $y=r_\theta$, we have
$$\begin{cases}
u-(\beta,\theta)\in\mathbb{Z}_{\geq0},\\
 (\beta,\alpha_1+3\alpha_2+2\alpha_{3}+\alpha_4)\in\mathbb{Z}_{>0},\\
 -(\beta,\alpha_i)\in\mathbb{Z}_{\geq0},&i=2,3,4,\\
 \frac{3}{u}(\beta,\alpha_2)\in \frac{1}{2}\mathbb{Z}_{\geq0},\\
  \frac{3}{u}(\beta,\alpha_i)\in \mathbb{Z}_{\geq0},&i=3,4,\\
 (\beta,\theta)\in 3u-\frac{u}{2}\mathbb{Z}_{\geq0},
\end{cases}$$
note that $(u,2)=(u,3)=1$, this is impossible.
If $y=1$, then we have
$$\begin{cases}
u+(\beta,\theta)\in\mathbb{Z}_{>0},\\
 -(\beta,\alpha_i)\in\mathbb{Z}_{\geq0},&i=1,2,3,4,\\
 \frac{3}{u}(\beta,\alpha_2)\in \frac{1}{2}\mathbb{Z}_{\geq0},\\
  \frac{3}{u}(\beta,\alpha_i)\in \mathbb{Z}_{\geq0},&i=3,4,\\
 (\beta,\theta)\in -\frac{u}{2}\mathbb{Z}_{\geq0},
\end{cases}$$
note that $(u,2)=(u,3)=1$, we have $(\beta,\alpha_i)=0$ for $i=1,\cdots,4$, thus $\beta=0$, i.e., $\hat{\lambda}=\mathcal{k}\Lambda_0$.
This implies that $L_{\widehat{\mathfrak{g}}}(\mathcal{k},0)$ itself is the only irreducible ordinary $L_{\widehat{\mathfrak{g}}}(\mathcal{k},0)$-module.
\end{proof}

\subsubsection{Type $G(3)$}
Let $\mathfrak{g}$ be the Lie superalgebra of type $G(3)$.
We fix the Dynkin diagram
$$
\begin{tikzpicture}[
    baseline=-0.5ex,
    line width=0.4pt,
    root/.style={circle, draw, minimum size=10pt, inner sep=0pt},
    otimes/.style={circle, draw, minimum size=10pt, inner sep=0pt,
        path picture={
            \draw (-3.1pt,-3.1pt) -- (3.1pt, 3.1pt);
            \draw (-3.1pt, 3.1pt) -- (3.1pt,-3.1pt);}},
   blackroot/.style={circle, draw, fill=black, minimum size=10pt, inner sep=0pt}]
    \node[ otimes, label=below:{$\scriptstyle \alpha_1$}]    (n1) at (0, 0) {};
     \node[root, label=below:{$\scriptstyle \alpha_{2}$}]    (n2) at (0.8, 0) {};
    \node[root, label=below:{$\scriptstyle \alpha_{3}$}] (n3) at (1.8, 0) {};
    \draw (n1) -- (n2);
    \draw[double, double distance=2.5pt] (n2) -- (n3);
    \draw (n2) -- (n3) node[midway] {\large $<$};
\end{tikzpicture}
$$
and $\Pi=\{\alpha_1,\alpha_2,\alpha_3\}$,
where $\alpha_2,\alpha_3$ are even and $\alpha_1$ is odd.
Then the corresponding Cartan matrix is
$$A=\begin{pmatrix}
 0 & -\frac{1}{3} &0\\
 -1 & 2 &-3\\
 0 & -1 &2
\end{pmatrix}
,$$
and $\theta=2\alpha_1+4\alpha_2+2\alpha_{3}$,
$W=W_1\times W_2$, where $W_1=\langle r_{\theta}\rangle, W_2=\langle r_{\alpha_2},r_{\alpha_3}\rangle$.
Note that $h^{\vee}=2$, $r^\vee=3$ and $(\rho, \alpha_1)=0,(\rho, \alpha_2)=\frac{1}{3}, (\rho, \alpha_3)=1$.
We can view $W_1$ as the Weyl group of $sl_2$ and view $W_2$ as the Weyl group of Lie algebra of type $G_2$.
From Lemma \ref{boundary}, all boundary admissible levels of $\widehat{\mathfrak{g}}$ are principal.
Similar to the proof of Theorem \ref{thmf4}, the main result for type $G(3)$ is the following theorem.

\begin{thm}
Let $\mathfrak{g}$ be the Lie superalgebra of type $G(3)$ and
$\mathcal{k}=\frac{2}{u}-2$ a boundary admissible level of $\widehat{\mathfrak{g}}$.
Then $L_{\widehat{\mathfrak{g}}}(\mathcal{k},0)$ itself is the only irreducible ordinary $L_{\widehat{\mathfrak{g}}}(\mathcal{k},0)$-module.
\end{thm}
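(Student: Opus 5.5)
I will follow the template of the proof of Theorem \ref{thmf4}, adapted to $G(3)$ with $h^\vee=2$, $r^\vee=3$ and $\theta=2\alpha_1+4\alpha_2+2\alpha_3$.

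\textbf{Reduction.} Let $L(\hat{\lambda})$ be an irreducible ordinary $L_{\widehat{\mathfrak{g}}}(\mathcal{k},0)$-module. By Lemma \ref{lemor} and Lemma \ref{lemboundary}, $\hat{\lambda}=(t_\beta y).(\mathcal{k}\Lambda_0)$, with $y\in W=W_1\times W_2$ subject to the positivity conditions listed after Lemma \ref{lemboundary}. Since $L(\lambda)$ is finite dimensional, Theorem 8 of \cite{Kac77} gives $(\hat\lambda,\alpha_i^\vee)\in\mathbb{Z}_{\geq0}$ for $i=2,3$ and $(\hat\lambda,\theta^\vee)\in\mathbb{Z}_{\geq0}$, the latter for the $sl_2$ factor with root $\theta$. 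Since $(\theta,\theta)<0$ here, this reads
\begin{equation*}
\begin{cases}
\frac{2}{u}(\beta,\alpha_2)+(\rho,y^{-1}\alpha_2-\alpha_2)\in\frac13\mathbb{Z}_{\geq0},\\
\frac{2}{u}(\beta,\alpha_3)+(\rho,y^{-1}\alpha_3-\alpha_3)\in\mathbb{Z}_{\geq0},\\
\frac{2}{u}(\beta,\theta)+(\rho,y^{-1}\theta-\theta)\in -c\,\mathbb{Z}_{\geq0},
\end{cases}
\end{equation*}
where $c=-\frac{(\theta,\theta)}{2}$ is computed from the Cartan matrix. I expect $(\theta,\theta)=-\frac{4}{3}$, so $c=\frac{2}{3}$. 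Throughout, $(u,2)=(u,3)=1$ by Lemma \ref{boundary}, since $h^\vee=2$ and $r^\vee=3$.

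\textbf{Case $y\notin W_1$.} Here the $W_2$-component of $y$ is nontrivial. I will copy Lemma \ref{lemy3} inside the $G_2$ root system $\Delta_2$, with highest root $\theta_2=2\alpha_2+\alpha_3$ or $3\alpha_2+2\alpha_3$ depending on normalization. This produces $\alpha\in Q_{2,+}=\mathbb{Z}_{\geq0}\alpha_2\oplus\mathbb{Z}_{\geq0}\alpha_3$ with $\theta+y^{-1}\alpha\in Q_+$ and $(\rho,\alpha-y^{-1}\alpha)>2$. The construction takes $\alpha=k\theta_2$, or $k\theta_2+l\alpha_j$ when $\theta_2$ is fixed, with $k$ large. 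Because $\theta$ contains $4\alpha_2+2\alpha_3$ and has large coefficients, $\theta+y^{-1}\alpha\in Q_+$ can be arranged.

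From the positivity conditions, $u-(\beta,\alpha)\in\mathbb{Z}_{\geq0}$. Expanding $\alpha$ in simple roots, the finite-dimensionality conditions give $\frac{2}{u}(\beta,\alpha)+(\rho,y^{-1}\alpha-\alpha)\ge0$. Together these yield $(\beta,\alpha)>u$, a contradiction.

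\textbf{Case $y=r_\theta$.} Now $y\alpha_i=\alpha_i$ for $i=2,3$ and $y\theta=-\theta$. The conditions become $u-(\beta,\theta)\in\mathbb{Z}_{\geq0}$, $-(\beta,\alpha_i)\in\mathbb{Z}_{\geq0}$ for $i=2,3$, and $-(\beta,r_\theta\alpha_1)\in\mathbb{Z}_{>0}$, since $r_\theta\alpha_1=\alpha_1-\theta$ is negative.

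The first two finite-dimensionality conditions say $\frac{2}{u}(\beta,\alpha_i)\ge0$, which with $-(\beta,\alpha_i)\in\mathbb{Z}_{\geq0}$ forces $(\beta,\alpha_2)=(\beta,\alpha_3)=0$. For the $\theta$ condition I compute $(\rho,-\theta-\theta)=-2(\rho,\theta)$, using $(\rho,\theta)=h^\vee-\frac12(\theta,\theta)$. This gives $(\beta,\theta)\in u\bigl((\rho,\theta)-\frac{c}{2}\mathbb{Z}_{\geq0}\bigr)$. Combined with $(\beta,\theta)\le u$ and $(\beta,\theta-\alpha_1)=2(\beta,\alpha_1)\in\mathbb{Z}_{>0}$, coprimality with $2$ and $3$ should exclude every value. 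This bookkeeping, including fixing the exact normalisation of $(\theta,\theta)$ and $(\rho,\theta)$, is where I expect the main obstacle.

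\textbf{Mixed case and $y=1$.} Elements $y=r_\theta y_2$ with $y_2\ne1$ already fall under the first case. For $y=1$ the conditions give $(\beta,\alpha_i)\in-\mathbb{Z}_{\geq0}$ and $(\beta,\alpha_i)\in\frac{u}{2}\cdot\frac{1}{3}\mathbb{Z}_{\geq0}$, so $(\beta,\alpha_2)=(\beta,\alpha_3)=0$. They also give $(\beta,\theta)\in-\frac{uc}{2}\mathbb{Z}_{\geq0}$ with $u+(\beta,\theta)>0$ and $(\beta,\theta)=2(\beta,\alpha_1)\in-2\mathbb{Z}_{\geq0}$. Since $uc/2=u/3$ and $(u,6)=1$, any nonzero value must be a multiple of $2u$, which is excluded because $u+(\beta,\theta)>0$. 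Hence $(\beta,\theta)=0$, so $\beta=0$ and $\hat\lambda=\mathcal{k}\Lambda_0$.
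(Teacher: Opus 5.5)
Your strategy is the paper's: for $G(3)$ the paper only says the proof is similar to that of Theorem \ref{thmf4}, and your case split ($y_2\neq 1$, $y=r_\theta$, $y=1$) is that template. Your treatment of $y\notin W_1$ and $y=1$ works.

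In the first case, take $\theta_2=3\alpha_2+2\alpha_3$, since $(\alpha_3,\alpha_3)=2$ and $(\alpha_2,\alpha_2)=\frac23$. If $y_2^{-1}\theta_2\in-\Delta_{2,+}$, then $(\rho,\theta_2-y_2^{-1}\theta_2)\geq 3+1>2$, and $\theta+y^{-1}\theta_2=2\alpha_1+\alpha_2+(\theta_2+y_2^{-1}\theta_2)\in Q_+$.

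The gap is the case $y=r_\theta$, the only place where $G(3)$-specific arithmetic matters. You left it unfinished, and two of your inputs are wrong.
\begin{itemize}
\item From the Cartan matrix, $(\theta,\alpha_1)=-\frac43$ and $(\theta,\alpha_2)=(\theta,\alpha_3)=0$. Hence $(\theta,\theta)=-\frac83$, so $c=\frac43$ and $(\rho,\theta)=\frac{10}{3}$, consistent with $h^\vee=2$.
\item Since $r_\theta\alpha_1=\alpha_1-\theta$ and $(\beta,\alpha_2)=(\beta,\alpha_3)=0$, the odd-root condition reads $(\beta,\theta-\alpha_1)=(\beta,\alpha_1)\in\mathbb{Z}_{>0}$, while $(\beta,\theta)=2(\beta,\alpha_1)$. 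You wrote $(\beta,\theta-\alpha_1)=2(\beta,\alpha_1)$, which loses the parity of $(\beta,\theta)$.
\end{itemize}
With your data ($c=\frac23$, $(\rho,\theta)=\frac83$, and only $(\beta,\theta)\in\mathbb{Z}_{>0}$), the value $(\beta,\theta)=u$ lies in $\frac u3(8-\mathbb{Z}_{\geq0})$ and satisfies $(\beta,\theta)\le u$. So coprimality with $2$ and $3$ would not exclude every value, and the case would not close.

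With the correct data it closes. The $\theta^\vee$-condition is $\frac2u(\beta,\theta)-\frac{20}{3}\in-\frac43\mathbb{Z}_{\geq0}$, i.e. $(\beta,\theta)\in\frac{2u}{3}(5-\mathbb{Z}_{\geq0})$. Since $(\beta,\theta)=2(\beta,\alpha_1)\in2\mathbb{Z}_{>0}$ and $(u,3)=1$, any $\frac{2u}{3}j$ that is an integer has $3\mid j$. Hence $(\beta,\theta)\in 2u\mathbb{Z}_{>0}$, which contradicts $u-(\beta,\theta)\in\mathbb{Z}_{\geq0}$. Correcting either $c$ or the parity relation alone suffices.

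In the case $y=1$ you should likewise have $\frac{uc}{2}=\frac{2u}{3}$ rather than $\frac u3$. Your conclusion is unaffected: $(\beta,\theta)\in-2u\mathbb{Z}_{\geq0}$ and $u+(\beta,\theta)>0$ force $(\beta,\theta)=0$, hence $\beta=0$ and $\hat\lambda=\mathcal{k}\Lambda_0$.
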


\end{document}